\newcounter{theorem}
\numberwithin{equation}{section}
\newtheorem{theorem}{Theorem}[section]
\newtheorem{corollary}[theorem]{Corollary}
\newtheorem{definition}[theorem]{Definition}
\newtheorem{lemma}[theorem]{Lemma}
\newtheorem{proposition}[theorem]{Proposition}
\newenvironment{proof}[1][Proof]{\textbf{#1.} }
{\ \Box\smallskip}
\newcommand{\RR}{\mathbb{R}}
\newcommand{\half}{\frac{1}{2}}
\newcommand{\real}{\mathbb{R}}
\newcommand{\nat}{\mathbb{N}}
\newcommand{\A}{\mathcal{A}}
\newcommand{\B}{\mathcal{B}}
\newcommand{\cP}{{\cal P}}
\newcommand{\cV}{{\cal V}}
\newcommand{\cW}{{\cal W}}
\newcommand{\cU}{{\cal U}}
\newcommand{\uone}{u_{\tau}^1}
\newcommand{\uzero}{u_{\tau}^0}
\newcommand{\wone}{w_{\tau}^1}
\newcommand{\wzero}{w_{\tau}^0}
\newcommand{\xione}{\xi_{\tau}^1}
\newcommand{\fone}{f_{\tau}^1}
\newcommand{\wktau}{w_{\tau}^k}
\newcommand{\uktau}{u_{\tau}^k}
\newcommand{\fktau}{f_{\tau}^k}
\newcommand{\wktauminus}{w_{\tau}^{k-1}}
\newcommand{\wktauminuss}{w_{\tau}^{k-2}}
\newcommand{\uktauminus}{u_{\tau}^{k-1}}
\newcommand{\uktauminuss}{u_{\tau}^{k-2}}
\newcommand{\wntau}{w_{\tau}^n}
\newcommand{\untau}{u_{\tau}^n}
\newcommand{\xintau}{\xi_{\tau}^n}
\newcommand{\fntau}{f_{\tau}^n}
\newcommand{\wntauminus}{w_{\tau}^{n-1}}
\newcommand{\wntauminuss}{w_{\tau}^{n-2}}
\newcommand{\untauminus}{u_{\tau}^{n-1}}
\newcommand{\untauminuss}{u_{\tau}^{n-2}}
\newcommand{\skalar}[1]{\left\langle #1 \right\rangle}
\newcommand{\dual}[2]{\left\langle #1 \right\rangle_{{#2}^*\times #2}}
\begin{document}

\title{\bf Convergence of a double step scheme for a class of second order Clarke subdifferential inclusions \thanks{\ The research was
		supported by the European
		Union's Horizon 2020 Research and Innovation Programme under
		the Marie Sklodowska-Curie grant agreement No. 823731 CONMECH,
		the Ministry of Science and Higher Education of Republic of Poland
		under Grant Nos. 4004/GGPJII/H2020/2018/0 and 440328/PnH2/2019,
		and the National Science Centre of Poland under Project No. 2021/41/B/ST1/01636.
}}

\author{
	Krzysztof Bartosz,  \
	Pawe{\l} Szafraniec 
	\\ ~ \\
	{\small Jagiellonian University, Faculty of Mathematics and Computer Science} \\
	{\small ul. \L{}ojasiewicza 6, 30348 Krakow, Poland} \\
}

\date{}
\maketitle \thispagestyle{empty}

\vskip 4mm

\noindent {\footnotesize{\bf Abstract.} 
In this paper we deal with a second order evolution inclusion involving a multivalued term generated by a Clarke subdifferential of a locally Lipschitz potential.  For this problem we construct a double step time-semidiscrete approximation, known as the Rothe scheme. We study a sequence of solutions of the semidiscrete approximate problems and provide its weak convergence to a limit element that is a solution of the original problem.

\vskip 2mm

\noindent {\bf 2020 Mathematics Subject Classification:} \ 34G25, 47J22, 49J27, 49J52, 49J53, 65J15, 65N06

\vskip 12mm
\section{Introduction}\label{Sec_1}
Evolutionary problems play very important role in mathematical modeling of various processes in physics, mechanics, economy, biology, etc., where the unknown functions vary in time. Mathematical description of such processes leads usually to differential equations or inclusions, whose type and structure depend on the character of described phenomena and properties of the objects involved. In fact, only very few of them can be solved exactly and in most cases one needs to apply various numerical schemes to achieve an approximate solution. The key idea of solving evolutionary problems is based on the discretization of time interval and constructing a sequence of simple functions related to the length of time step, that converges to an exact solution as the time step converges to zero. In this way, the existence of solution can be proved on one hand and, on the other hand, a behavior and quality of approximate scheme can be analyzed. It gives us a rise to construct various numerical schemes and study their effectiveness.\\
\indent The temporal discretization method, introduced by E. Rothe in 1930 has been used and developed by many authors, see for example \cite{Rektorys} for linear and quasilinear parabolic problems, \cite{Kacur} and \cite{Roubicek2005} for nonlinear parabolic equations and variational inequalities. \\
\indent In this paper we consider a class of second order evolutionary inclusions, in which a multivalued term has a form of Clarke subdifferential of a locally Lipschitz potential. Such inclusions describe a dynamic viscoelastic contact problem in mechanics involving nonmonotone, multivalued frictional contact law. 
The detailed description of the above mechanical process can be found, for instance, in Chapter 5.6 of \cite{HMSBOOK} and in Sections 6-7 of \cite{Bartosz_theta}. It is worth to mention that such processes are typically modeled by means of hemivariational inequalities (HVIs), which represent a weak formulation of a mechanical problems, as specified in Section 6 of \cite{Bartosz_theta}. Furthermore, a solution of HVI can be obtained by solving a related subdifferential inclusion, which  motivates us to deal with inclusions, as the more general problems than HVIs. In Chapter 5 of \cite{HMSBOOK}, a class of second order evolutionary inclusions have been analyzed by means of Rothe method. In that case, the time derivative of an unknown function has been approximated by means of the implicit Euler scheme. This approach has been generalized in \cite{Bartosz_theta} by using the so called $\theta$ scheme, where $\theta\in[0,1]$ is a parameter and, for $\theta=1$, both schemes coincide. Now, we consider a new approach to the class of inclusions studied in  \cite{Bartosz_theta} and \cite{HMSBOOK} . The novelty lies in the use of the so called double step scheme, in which the time derivative of the unknown function at a current point is approximated by a formula involving two previous points. This approach have been successively applied in our previous paper \cite{BSZ} to approximate a solution of a parabolic subdifferential inclusion. Here we extend this result to the class of second order inclusions. Our goal is to study a convergence of the double step Rothe scheme to a solution of the exact problem.\\            
\indent In our work we take advantage of some ideas taken from the previous papers in this field. First of all, we mention \cite{Kalita2013}, where the Rothe method has been applied for the first time to solve evolutionary subdifferential inclusion. 
This pioneering paper, in which a nonlinear parabolic HVI has been studied, inspired many other authors to incorporate Rothe method to solve more general evolutionary HVIs. In particular, it has been applied to second order HVIs in \cite{Peng_Xiao}, to history dependent HVIs in \cite{Migorski_Zeng_1}, to HVIs modeling dynamic contact problems with adhesion in \cite{Migorski_Zeng_2} and to multi-term time fractional integral diffusion equations in \cite{Migorski_Zeng_3}.  Very recently, Rothe method has been used to noncoercive variational-hemivariational inequalities (VHVIs) in \cite{Peng_Huand_Ma} and to history dependent fractional differential HVIs in \cite{Hailing_Cheng}. We also refer to \cite{BCKYZ, Bartosz_Sofonea_1, Kalita} for more applications of Rothe method in various kinds of evolutionary HVIs. 
We also mention that some ideas of our work come from \cite{Emmrich}, where the double step time discretization scheme has been applied to the analysis of a non-Newtonian fluid flow. In a recent paper \cite{LCX} authors anlalyse a similar problem, but with linear operators. Finally, we have intensively exploit \cite{Migorski_Sofonea_book} as a comprehensive review of classical theory and recent advances in the field of variational and hemivariational inequalities. \\  
\indent The rest of the paper is structured as follows. In Section \ref{Sec_2} we introduce notation and present preliminary materials. In Section \ref{Sec_3} we formulate the problem to be solved and state the assumptions on its data. In Section \ref{Sec_4} we introduce the Rothe problem corresponding to the original one, based on a double step semidiscrete approximate scheme. We deal with solvability of Rothe problem and study properties of its solution. Finally, in Section \ref{Sec_5}, we provide a convergence of the sequence of approximate solutions to a solution of the original problem.

\section{Notation and preliminaries}\label{Sec_2}
In this section we introduce notation and recall several known results that will be used in the rest of the paper.

Let $X$ be a real normed space. Everywhere in the paper we will use the symbols $\|\cdot\|_X$, $X^*$ and $\dual{\cdot,\cdot}{X}$ to denote the norm in $X$, its dual space and the duality pairing of $X$ and $X^*$, respectively. Moreover, if $Y$ is a normed space and $f\in {\cal L}(X,Y)$, we will briefly write $\|f\|$ instead of $\|f\|_{{\cal L}(X,Y)}$ and we will use notation $f^*\colon Y^*\to X^*$ for the adjoint operator to $f$. 
We start with the definition of the Clarke generalized directional derivative and the Clarke subdifferential.

\begin{definition}\label{Def_Clarke_subdifferential}
Let $\varphi\colon X\to \RR$ be a locally Lipschitz function. The
Clarke generalized directional derivative of $\varphi$ at the
point $x\in X$ in the direction $v\in X$,  is defined by
\[ \varphi^0(x;v)=\limsup_{y\to x, \lambda \downarrow 0}
\frac{\varphi(y+\lambda v)-\varphi(y)}{\lambda}.\]
The Clarke subdifferential of $\varphi$ at
$x$ is a subset of $X^*$ given by
\[ \partial \varphi(x)=\{\,\xi\in X^*\,\,|\,\, \varphi^0(x;v)\geq
\dual{\xi,v}{X}\,\,\,\,\, \text{for all }\,\,  v\in X\,\}. \]
\end{definition}

\noindent In what follows, we recall the definition of pseudomonotone operator in both single-valued and multivalued cases (see \cite{Zeidler1990} and \cite{Migorski_Sofonea_book}).

\begin{definition}\label{Def_pseudomonotone_single}
 A single-valued operator $A\colon X\to X^*$ is called pseudomonotone
 if for any sequence $\{v_n\}_{n=1}^\infty\subset X$, $v_n\to v$
 weakly in $X$ and $\limsup_{n\to\infty}\dual{Av_n, v_n-v}{X}\leq
 0$
 imply that
 $\dual{Av, v-y}{X} \leq \liminf_{n\to\infty}\dual{Av_n, v_n-y}{X}$
 for every $y\in X$.
\end{definition}

\begin{definition}\label{Def_pseudomonotone_multi}
	A multivalued operator $A\colon X\to 2^{X^*}$ is called pseudomonotone if the following conditions hold:
	\begin{itemize}
		\item[ 1)] $A$ has values which are nonempty, bounded, closed and convex.
		\item[ 2)] $A$ is upper semicontinuous (usc, in short) from every finite dimensional subspace of $X$
		into $X^{\ast}$ endowed with the weak to\-po\-lo\-gy.
		\item[ 3)] For any sequence $\{v_n\}_{n=1}^\infty\subset X$ and
		any $v_n^{\ast} \in A(v_n)$, $v_n \rightarrow v$ weakly in $X$ and\\ $ \limsup_{n\rightarrow
			\infty} \dual{v_n^\ast, v_n - v}{X}\leq 0$ imply that for
		every $y \in X$ there exists $u(y) \in A(v)$ such that $\dual{
			u(y), v - y}{X}
		\leq \liminf_{n \rightarrow \infty} \dual{ v_n^{\ast}, v_n - y}{X}$.
	\end{itemize}
\end{definition}

\noindent The following two propositions provide an important class of pseudomonotone operators that will appear in the next section.
They correspond to Proposition 5.6 in \cite{HMSBOOK} and Proposition
1.3.68 in \cite{DMP2}, respectively.

\begin{proposition}\label{Prop_2.1}
	Let $X$ and $U$ be two reflexive Banach spaces and $\iota\colon  X\to
	U$ a linear, continuous and compact operator. Let $J\colon
	U\to\real$ be a locally Lipschitz functional and  assume that its Clarke
	subdifferential satisfies
	\begin{equation}\nonumber
	\|\xi\|_{U^*}\leq c(1+\|u\|_U)\quad\text{for all}\,\,\, u\in U,\,\,\xi\in\partial J(u)
	\end{equation}
	with $c>0$. Then the multivalued operator $M\colon X\to 2^{X^*}$ defined by
	\begin{equation}\nonumber
	M(v)=\iota^*\partial J(\iota v) \quad \text{for all}\,\,\,  v\in X
	\end{equation}
	is pseudomonotone.
\end{proposition}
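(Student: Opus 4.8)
The plan is to verify, one by one, the three conditions in Definition \ref{Def_pseudomonotone_multi}, using only the standard structural properties of the Clarke subdifferential of a locally Lipschitz functional (nonempty, convex, weak$^*$ compact values, sequential weak$^*$-closed graph, together with upper semicontinuity of $u\mapsto J^0(u;w)$) and the compactness of $\iota$.

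For condition 1), I would recall that for locally Lipschitz $J$ the set $\partial J(u)$ is nonempty, convex and weak$^*$ compact in $U^*$ for every $u\in U$, and bounded by the assumed growth estimate. Since $\iota^*\colon U^*\to X^*$ is linear and continuous — and, being an adjoint, also weak$^*$-to-weak$^*$ continuous — the image $M(v)=\iota^*\partial J(\iota v)$ is nonempty, convex, bounded and weak$^*$ compact in $X^*$. As $X$ is reflexive, the weak$^*$ and weak topologies on $X^*$ agree, so $M(v)$ is weakly compact, hence norm closed; this settles 1).

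For condition 2), I would use that the graph of $\partial J$ is sequentially closed from $U$ (strong) into $U^*$ (weak$^*$): if $u_n\to u$ in $U$, $\xi_n\in\partial J(u_n)$ and $\xi_n\rightharpoonup\xi$ weakly$^*$ in $U^*$, then passing to the $\limsup$ in $\dual{\xi_n,w}{U}\le J^0(u_n;w)$ and invoking upper semicontinuity of $J^0(\cdot\,;w)$ gives $\xi\in\partial J(u)$. Composing with the continuity of $\iota$ and $\iota^*$, and using that the growth condition sends bounded subsets of $X$ into bounded — hence, by reflexivity, relatively weakly compact — subsets of $X^*$, we get that $M$ has a closed graph into $X^*$ with the weak topology on bounded sets. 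Restricted to any finite-dimensional subspace of $X$, on which bounded sets are relatively compact, this is exactly the required upper semicontinuity into $X^*$ endowed with the weak topology.

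Condition 3) is where the compactness of $\iota$ does the essential work, and I expect it to be the only delicate part. Let $v_n\rightharpoonup v$ weakly in $X$, $v_n^*=\iota^*\xi_n$ with $\xi_n\in\partial J(\iota v_n)$, and $\limsup_{n\to\infty}\dual{v_n^*,v_n-v}{X}\le 0$. Compactness of $\iota$ gives $\iota v_n\to\iota v$ strongly in $U$, and the growth condition bounds $\{\xi_n\}$ in $U^*$; therefore
\[
\dual{v_n^*,v_n-v}{X}=\dual{\xi_n,\iota v_n-\iota v}{U}\longrightarrow 0 .
\]
Fix $y\in X$ and pick a subsequence realizing $\liminf_{n\to\infty}\dual{v_n^*,v_n-y}{X}$; along a further subsequence $\xi_n\rightharpoonup\xi$ in the reflexive space $U^*$, and the closed-graph property above together with $\iota v_n\to\iota v$ yields $\xi\in\partial J(\iota v)$. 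Splitting
\[
\dual{v_n^*,v_n-y}{X}=\dual{\xi_n,\iota v_n-\iota v}{U}+\dual{\xi_n,\iota v-\iota y}{U},
\]
the first term goes to $0$ and the second to $\dual{\xi,\iota v-\iota y}{U}=\dual{\iota^*\xi,v-y}{X}$. Hence, with $u(y):=\iota^*\xi\in\iota^*\partial J(\iota v)=M(v)$, we obtain $\dual{u(y),v-y}{X}=\liminf_{n\to\infty}\dual{v_n^*,v_n-y}{X}$, which is even stronger than the claimed inequality. The main obstacle is thus the careful handling of the nested subsequences (one realizing the $\liminf$, one giving the weak limit $\xi$) and the correct use of the weak$^*$-closedness of the Clarke subdifferential; the rest is a routine combination of the compactness of $\iota$ with the reflexivity of $U$ and $X$.
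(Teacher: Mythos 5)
Your proof is correct. Note that the paper does not actually prove Proposition \ref{Prop_2.1} but quotes it from Proposition 5.6 of \cite{HMSBOOK}; your argument is essentially the standard proof given there — conditions 1) and 2) follow from the structural properties of the Clarke subdifferential plus the growth bound exactly as you say, and in condition 3) the compactness of $\iota$ does all the work (indeed your computation shows $\dual{v_n^*,v_n-v}{X}\to 0$ without even using the $\limsup$ hypothesis), while the nested subsequence extraction is legitimate because Definition \ref{Def_pseudomonotone_multi} allows the element $u(y)$ to depend on $y$.
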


\begin{proposition}\label{Prop_sum_pseudo}
	Assume that  $X$ is a reflexive Banach space and $A_1, A_2\colon X\to
	2^{X^*}$ are pseudomonotone operators. Then the operator $A_1+A_2\colon X\to 2^{X^*}$
	is pseudomonotone.
\end{proposition}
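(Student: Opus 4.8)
The plan is to verify, one at a time, the three conditions of Definition~\ref{Def_pseudomonotone_multi} for the operator $A:=A_1+A_2$, whose values are $A(v)=A_1(v)+A_2(v)=\{\,\xi_1+\xi_2\,:\,\xi_i\in A_i(v),\ i=1,2\,\}$, using throughout that $A_1$ and $A_2$ already satisfy these conditions.

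Condition~1) is the routine part: nonemptiness, boundedness and convexity of $A_1(v)+A_2(v)$ follow immediately from the corresponding properties of $A_1(v)$ and $A_2(v)$, and for closedness I would use reflexivity of $X^*$. A bounded, closed and convex subset of a reflexive space is weakly compact, so $A_1(v)$ and $A_2(v)$ are weakly compact; since the addition map $X^*\times X^*\to X^*$ is continuous for the weak topology, $A_1(v)+A_2(v)$ is the continuous image of a weakly compact set, hence weakly compact, hence weakly closed, and, being convex, strongly closed as well. For condition~2) I would invoke the standard fact that the sum of two upper semicontinuous multifunctions with compact values and range in a topological vector space is again upper semicontinuous; applying this to the restrictions of $A_1$ and $A_2$ to an arbitrary finite dimensional subspace of $X$, with $X^*$ carrying the weak topology in which the values $A_i(v)$ are compact, gives condition~2) for $A_1+A_2$.

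The substance of the argument, and the step I expect to be the main obstacle, is condition~3). Suppose $v_n\to v$ weakly in $X$ and $v_n^*\in A(v_n)$ satisfy $\limsup_{n\to\infty}\dual{v_n^*,v_n-v}{X}\le 0$, and decompose $v_n^*=v_n^1+v_n^2$ with $v_n^i\in A_i(v_n)$. The key intermediate claim is that
\begin{equation}\nonumber
\limsup_{n\to\infty}\dual{v_n^i,v_n-v}{X}\le 0\qquad\text{for }i=1,2.
\end{equation}
I would prove this by contradiction. Assume, say, $a:=\limsup_{n\to\infty}\dual{v_n^1,v_n-v}{X}\in(0,+\infty]$ and pass to a subsequence, not relabelled, along which $\dual{v_n^1,v_n-v}{X}\to a$. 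Since $\dual{v_n^2,v_n-v}{X}=\dual{v_n^*,v_n-v}{X}-\dual{v_n^1,v_n-v}{X}$, on this subsequence
\begin{equation}\nonumber
\limsup_{n\to\infty}\dual{v_n^2,v_n-v}{X}\le\limsup_{n\to\infty}\dual{v_n^*,v_n-v}{X}-a\le -a<0 .
\end{equation}
In particular $\limsup_{n\to\infty}\dual{v_n^2,v_n-v}{X}\le 0$, so condition~3) for the pseudomonotone operator $A_2$, applied to $\{v_n^2\}$ with the test element $y=v$, yields $u\in A_2(v)$ with $0=\dual{u,v-v}{X}\le\liminf_{n\to\infty}\dual{v_n^2,v_n-v}{X}$, contradicting $\limsup_{n\to\infty}\dual{v_n^2,v_n-v}{X}<0$. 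Hence $a\le 0$; interchanging the roles of $A_1$ and $A_2$ gives the claim for $i=2$ as well.

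Granting the claim, fix an arbitrary $y\in X$. Condition~3) for $A_1$, applied to $\{v_n^1\}$, provides $u_1(y)\in A_1(v)$ with $\dual{u_1(y),v-y}{X}\le\liminf_{n\to\infty}\dual{v_n^1,v_n-y}{X}$, and likewise condition~3) for $A_2$ provides $u_2(y)\in A_2(v)$ with $\dual{u_2(y),v-y}{X}\le\liminf_{n\to\infty}\dual{v_n^2,v_n-y}{X}$. Setting $u(y):=u_1(y)+u_2(y)\in A_1(v)+A_2(v)=A(v)$ and using the superadditivity of the lower limit,
\begin{equation}\nonumber
\dual{u(y),v-y}{X}\le\liminf_{n\to\infty}\dual{v_n^1,v_n-y}{X}+\liminf_{n\to\infty}\dual{v_n^2,v_n-y}{X}\le\liminf_{n\to\infty}\dual{v_n^*,v_n-y}{X},
\end{equation}
which is precisely condition~3) for $A=A_1+A_2$. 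This completes the verification.
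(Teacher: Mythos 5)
The paper does not prove this proposition at all: it is quoted as Proposition 1.3.68 of \cite{DMP2}, so there is no in-paper argument to compare against. Your proof is correct and is essentially the standard one given in such references: the weak-compactness argument for closedness of the sum of values handles condition 1), the tube-lemma fact about sums of usc compact-valued maps handles condition 2), and the contradiction argument showing $\limsup_{n\to\infty}\dual{v_n^i,v_n-v}{X}\le 0$ for each summand separately (by testing condition 3) of the other summand at $y=v$) is exactly the key step in condition 3).
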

    
\noindent In what follows, we introduce the notion of coercivity.
\begin{definition}
	Let $X$ be a real Banach space and $A\colon X\to 2^{X^*}$ be a multivalued operator. We say that $A$ is coercive if either $D(A)$ is bounded or $D(A)$ is unbounded and
	$$
	\lim_{\|v\|_X\to\infty\,\,v\in D(A)}\frac{\inf\{\dual{v^*,v}{X}\,|\,\,v^*\in A v\}}{\|v\|_X}=+\infty,
	$$
	where, recall, $D(A)=\{x\in X|\,\,A(x)\neq\emptyset\}$ is the domain of $A$.
\end{definition}
The following is the main surjectivity result for multivalued  pseudomonotone operator.

\begin{proposition}\label{prop:Bartosz5}
	Let $X$ be a real, reflexive Banach space and $A\colon X\to 2^{X^*}$ be  bounded,  coercive and pseudomonotone. Then $A$ is surjective, i.e., for all $b\in X^*$ there exists $v\in X$ such that $Av\ni b$.
\end{proposition}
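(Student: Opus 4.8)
The plan is a Galerkin (internal approximation) argument combined with the pseudomonotonicity property. First I would reduce to the case $b=0$: the operator $v\mapsto Av-b$ is again bounded, coercive (since $\inf\{\langle v^*-b,v\rangle:v^*\in Av\}/\|v\|_X\ge\inf\{\langle v^*,v\rangle:v^*\in Av\}/\|v\|_X-\|b\|_{X^*}$) and pseudomonotone (a constant shift affects none of the three conditions of Definition \ref{Def_pseudomonotone_multi}), so it suffices to find $v\in X$ with $0\in Av$. Next I would fix a nondecreasing family $\{X_n\}$ of finite-dimensional subspaces of $X$ with $\overline{\bigcup_nX_n}=X$ (when $X$ is separable; in general one works with the directed family of all finite-dimensional subspaces ordered by inclusion, replacing sequences by nets and weak limit points below). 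Writing $i_n\colon X_n\to X$ for the inclusion and $i_n^*\colon X^*\to X_n^*$ for its adjoint, I would look at $A_n:=i_n^*\circ A\circ i_n\colon X_n\to 2^{X_n^*}$, which inherits from $A$ nonempty compact convex values, upper semicontinuity (on $X_n$ weak and strong topologies coincide), and coercivity, because $\langle i_n^*v^*,v\rangle_{X_n^*\times X_n}=\langle v^*,i_nv\rangle_{X^*\times X}$. The finite-dimensional surjectivity theorem for upper semicontinuous multifunctions with compact convex values, a consequence of Brouwer's fixed point theorem (in the form: if $\langle w^*,w\rangle\ge0$ for all $\|w\|=R$ and all $w^*\in A_nw$, then $0\in A_nw$ for some $\|w\|\le R$), then yields $v_n\in X_n$ and $v_n^*\in Av_n$ with $\langle v_n^*,w\rangle_{X^*\times X}=0$ for all $w\in X_n$.

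The second step is the a priori bound. Coercivity of $A$, applied at the level of $X$, gives $\|v_n\|_X\le R$ with $R$ depending only on the coercivity modulus of $A$, hence independent of $n$; then boundedness of $A$ gives $\|v_n^*\|_{X^*}\le C$ with $C$ independent of $n$. Using reflexivity of $X$ (and of $X^*$), I would extract a subsequence with $v_n\rightharpoonup v$ in $X$ and $v_n^*\rightharpoonup v^*$ in $X^*$ (in the non-separable case, a common weak limit point of the net $\{(v_n,v_n^*)\}$ in the weakly compact product of two balls, obtained from the finite intersection property of the families $\overline{\{(v_{n'},v_{n'}^*):n'\ge n\}}^{\,w}$).

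The third step identifies the limit. For fixed $m$ and $w\in X_m$, the identity $\langle v_n^*,w\rangle=0$ holds for all $n\ge m$; passing to the limit gives $\langle v^*,w\rangle=0$ on the dense set $\bigcup_mX_m$, so $v^*=0$. Choosing $w_m\in X_m$ with $w_m\to v$, for $n\ge m$ one has $\langle v_n^*,v_n-w_m\rangle=0$, hence
\[\langle v_n^*,v_n-v\rangle=\langle v_n^*,v_n-w_m\rangle+\langle v_n^*,w_m-v\rangle=\langle v_n^*,w_m-v\rangle,\]
so $\limsup_{n\to\infty}\langle v_n^*,v_n-v\rangle\le C\|w_m-v\|_X$; letting $m\to\infty$ gives $\limsup_{n\to\infty}\langle v_n^*,v_n-v\rangle\le0$. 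Pseudomonotonicity (condition 3) now applies: for every $y\in X$ there is $u(y)\in Av$ with $\langle u(y),v-y\rangle\le\liminf_{n\to\infty}\langle v_n^*,v_n-y\rangle$. Taking $y=v$ yields $0\le\liminf_n\langle v_n^*,v_n-v\rangle$, so $\langle v_n^*,v_n-v\rangle\to0$; then for arbitrary $y$, $\liminf_n\langle v_n^*,v_n-y\rangle=\lim_n\langle v_n^*,v-y\rangle=\langle v^*,v-y\rangle=0$ since $v_n^*\rightharpoonup v^*=0$. Thus for every $y\in X$ there is $u(y)\in Av$ with $\langle u(y),v-y\rangle\le0$; taking $y=v-z$ with $z\in X$ arbitrary gives $\inf\{\langle\zeta,z\rangle:\zeta\in Av\}\le0$ for all $z\in X$. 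Since $Av$ is nonempty, closed and convex (condition 1), the Hahn–Banach separation theorem in $X^*$ (separating $0$ from $Av$ by an element of $X=X^{**}$) forces $0\in Av$, i.e. $b\in Av$ for the original operator.

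The step I expect to be the main obstacle is the finite-dimensional surjectivity lemma used to construct the Galerkin solutions $v_n$: although classical, it genuinely rests on a fixed-point theorem and must be set up so that the radius $R$ is controlled solely by the coercivity of $A$ and uniformly in $n$, so that the a priori estimate passes to the limit. A secondary technical point is the limit passage when $X$ is not assumed separable, where one must argue with weak limit points of nets rather than subsequences; the remaining steps — identifying $v^*$, verifying the $\limsup$ inequality, and the concluding separation argument — are then routine.
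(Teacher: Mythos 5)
The paper does not prove this proposition at all: it is quoted as a known surjectivity theorem for multivalued pseudomonotone operators (of Browder--Hess type, cf. \cite{DMP2,Zeidler1990}), so there is no in-paper argument to compare against. Your Galerkin proof is the standard textbook route to this result, and in the separable case it is essentially complete and correct: the reduction to $b=0$ is sound; the operators $A_n=i_n^*\circ A\circ i_n$ do inherit nonempty compact convex values (weak compactness of the bounded closed convex set $Av$ in the reflexive $X^*$, pushed forward by the weak-to-weak continuous $i_n^*$), upper semicontinuity and coercivity with a radius $R$ uniform in $n$; the a priori bounds, the identification $v^*=0$ via density of $\bigcup_m X_m$, the $\limsup$ inequality through the approximants $w_m$, and the concluding use of condition 3) of Definition \ref{Def_pseudomonotone_multi} followed by Hahn--Banach separation of $0$ from the weakly closed convex set $Av$ (with the separating functional living in $X^{**}=X$ by reflexivity) are all correct. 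The one ingredient you state but do not prove is the finite-dimensional acute-angle surjectivity lemma for usc multifunctions with compact convex values; you flag this yourself, it is classical (a Kakutani rather than Brouwer argument in the multivalued setting), and you verify its hypotheses, so I regard this as an acceptable black box at the level of detail the paper itself operates on.

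The only genuine soft spot is your parenthetical treatment of the non-separable case. Replacing the increasing sequence $\{X_n\}$ by the directed family of all finite-dimensional subspaces and passing to weak limit points of nets does not mesh with Definition \ref{Def_pseudomonotone_multi}, whose condition 3) is a \emph{sequential} condition: a weak cluster point of a bounded net need not be reachable by a subsequence of that net along which the orthogonality relations and the $\limsup$ estimate survive, so the pseudomonotonicity step cannot be invoked verbatim. The classical references handle this with a more careful argument (or by strengthening the usc/limit conditions to nets); since the space $V$ to which the proposition is applied in this paper is assumed separable, this caveat is immaterial here, but as a proof of the proposition as stated it is the one step that is not fully justified.
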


\noindent We also recall Lemma 3.4.12 of \cite{DMP2}, known as Ehrling's Lemma.

\begin{lemma}\label{Lemma_Ehrling}
	If $V_0,\, V,\, V_1$ are Banach spaces such that $V_0\subset V\subset V_1$, the embedding of $V_0$ into $V$ is compact and the embedding of $V$ into $V_1$ is continuous, then for every $\varepsilon>0$, there exists a constant $c(\varepsilon)>0$ such that
	\begin{eqnarray}
	\|v\|_V\leq\varepsilon\|v\|_{V_0}+c(\varepsilon)\|v\|_{V_1}\quad \text{for all}\,\,\,v\in V_0.
	\end{eqnarray}
\end{lemma}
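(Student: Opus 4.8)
The plan is to argue by contradiction, using the compactness of the embedding $V_0\hookrightarrow V$ to extract a convergent subsequence and the continuity (hence injectivity) of the embedding $V\hookrightarrow V_1$ to reach an absurdity; no quantitative control of $c(\varepsilon)$ is sought.

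First I would negate the conclusion: assume there is some $\varepsilon_0>0$ such that for every $n\in\mathbb{N}$ there exists $v_n\in V_0$ with $\|v_n\|_V>\varepsilon_0\|v_n\|_{V_0}+n\|v_n\|_{V_1}$. In particular $v_n\neq 0$, and since the embedding $V_0\hookrightarrow V$ is injective we have $\|v_n\|_V\neq 0$, so dividing by $\|v_n\|_V$ we may assume $\|v_n\|_V=1$ for every $n$. The defining inequality then yields simultaneously $\|v_n\|_{V_0}<1/\varepsilon_0$ and $\|v_n\|_{V_1}<1/n$.

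Next, since $\{v_n\}$ is bounded in $V_0$ and the embedding of $V_0$ into $V$ is compact, there is a subsequence, still denoted $\{v_n\}$, and an element $v\in V$ with $v_n\to v$ strongly in $V$; hence $\|v\|_V=\lim_{n\to\infty}\|v_n\|_V=1$. On the other hand, the continuous embedding $V\hookrightarrow V_1$ gives $v_n\to v$ strongly in $V_1$ as well, while $\|v_n\|_{V_1}<1/n\to 0$ forces $v=0$ in $V_1$. By injectivity of the embedding of $V$ into $V_1$ this means $v=0$ in $V$, contradicting $\|v\|_V=1$. This contradiction proves the statement, and the resulting constant $c(\varepsilon)$ depends only on $\varepsilon$ and on the three spaces.

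The only point requiring care — and it is the thing I would flag rather than a genuine obstacle — is that "embedding" must be read in the injective sense, so that $\|\cdot\|_V$ does not vanish on nonzero elements of $V_0$ (needed for the normalization) and so that vanishing in $V_1$ transfers back to vanishing in $V$ (needed to close the contradiction). Everything else is a standard compactness argument.
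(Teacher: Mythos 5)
Your proof is correct: this is the standard contradiction-plus-compactness argument for Ehrling's lemma, and the injectivity point you flag is automatic here since the embeddings are set inclusions $V_0\subset V\subset V_1$. The paper itself does not prove this statement — it only cites Lemma 3.4.12 of \cite{DMP2} — and your argument is the classical one found there, so nothing further is needed.
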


\indent Let $X$ be a Banach space and $T>0$.
We introduce the space $BV(0,T;X)$ of functions of
bounded total variation on $[0,T]$. Let $\pi$ denote any finite
partition of $[0,T]$ by a family of disjoint subintervals $\{\sigma_i
= (a_i,b_i)\}$ such that $[0,T] = \cup_{i=1}^n \bar{\sigma}_i$.
Let $\mathcal{F}$ denote the family of all such partitions. Then,
for a function $x\colon [0,T]\to X$ and for $1 \leq q < \infty$,
we define a seminorm 
\begin{equation}\nonumber
\| x \|^q_{BV^q (0,T;X)} = \sup_{\pi \in \mathcal{F}} \left\{
\sum_{\sigma_i \in \pi} \|x(b_i)-x(a_i)\|_X^q \right\},
\end{equation}
and the space
$$
BV^q(0,T;X)=\{x\colon [0,T]\to X |\,\,\| x \|_{BV^q (0,T;X)}<\infty\}.
$$
For $1 \leq p \leq \infty$, $1\leq q < \infty$ and Banach spaces $X$, $Z$ such that $X
\subset Z$, we introduce a vector space
$$ M^{p,q}(0,T;X,Z) = L^p(0,T;X) \cap BV^q (0,T;Z).$$
Then $M^{p,q}(0,T; X,Z)$ is also a Banach space with the norm given by $\| \cdot \|_{L^p(0,T;X)} + \| \cdot \|_{BV^q(0,T;Z)}.$\\

\noindent The following proposition will play the crucial role for the convergence of the Rothe functions which will be constructed later. 
For its proof, we refer to \cite{Kalita2013}. 

\begin{proposition}\label{prop:Bartosz6}
	Let $1\leq p,q<\infty$. Let $X_1\subset X_2\subset X_3$ be real Banach spaces such that $X_1$ is reflexive, the embedding $X_1\subset X_2$ is compact and the embedding $X_2\subset X_3$ is continuous. Then the embedding $M^{p,q}(0,T;X_1;X_3)\subset L^p(0,T;X_2)$ is compact. 
\end{proposition}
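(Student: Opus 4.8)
This is an Aubin--Lions--Simon-type compactness statement: the $L^p(0,T;X_1)$-bound together with the compact embedding $X_1\hookrightarrow\hookrightarrow X_2$ supplies ``spatial'' compactness, while the $BV^q(0,T;X_3)$-bound supplies ``temporal'' equicontinuity. My plan is to take an arbitrary bounded sequence $\{u_n\}\subset M^{p,q}(0,T;X_1;X_3)$, say with $\|u_n\|_{L^p(0,T;X_1)}+\|u_n\|_{BV^q(0,T;X_3)}\le C$, and to produce a subsequence converging strongly in $L^p(0,T;X_2)$ (which is exactly compactness of the embedding). The tool is a Fréchet--Kolmogorov/Simon compactness criterion in $L^p(0,T;X_2)$: a bounded family $F\subset L^p(0,T;X_2)$ is relatively compact provided (i) for all $0<t_1<t_2<T$ the set $\{\int_{t_1}^{t_2}f(s)\,ds\,:\,f\in F\}$ is relatively compact in $X_2$, and (ii) $\sup_{f\in F}\|f(\cdot+h)-f\|_{L^p(0,T-h;X_2)}\to 0$ as $h\downarrow 0$. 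Note the elements of $M^{p,q}$ have genuine representatives (with one-sided limits everywhere) coming from the $BV^q$ component, so the pointwise manipulations below are legitimate.

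Condition (i) is easy: by Hölder, $\|\int_{t_1}^{t_2}u_n(s)\,ds\|_{X_1}\le (t_2-t_1)^{1-1/p}\,\|u_n\|_{L^p(0,T;X_1)}$, so these integrals form a bounded subset of $X_1$, hence a relatively compact subset of $X_2$ by the compact embedding. For condition (ii) I would invoke Ehrling's Lemma (Lemma~\ref{Lemma_Ehrling}) with $(V_0,V,V_1)=(X_1,X_2,X_3)$: for every $\varepsilon>0$ there is $c(\varepsilon)>0$ with $\|w\|_{X_2}\le\varepsilon\|w\|_{X_1}+c(\varepsilon)\|w\|_{X_3}$ for all $w\in X_1$. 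Applying this to $w=u_n(t+h)-u_n(t)$, taking $L^p$-norms in $t\in(0,T-h)$ and using Minkowski's inequality yields
\[
\|u_n(\cdot+h)-u_n\|_{L^p(0,T-h;X_2)}\;\le\;2\varepsilon\,C\;+\;c(\varepsilon)\,\|u_n(\cdot+h)-u_n\|_{L^p(0,T-h;X_3)} ,
\]
so everything is reduced to a uniform-in-$n$ translation estimate in $L^p(0,T;X_3)$.

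That estimate is the heart of the matter. Using the one-interval partition of $[t,t+h]$ one gets $\|x(t+h)-x(t)\|_{X_3}^q\le\|x\|_{BV^q([t,t+h];X_3)}^q$; the functional $I\mapsto\|x\|_{BV^q(I;X_3)}^q$ is monotone and superadditive under concatenation of adjacent intervals (combine partitions), and a slicing/Fubini argument over a grid of mesh $h$ then gives $\int_0^{T-h}\|x(t+h)-x(t)\|_{X_3}^q\,dt\le h\,\|x\|_{BV^q(0,T;X_3)}^q$, i.e.\ $\|u_n(\cdot+h)-u_n\|_{L^q(0,T-h;X_3)}\le h^{1/q}C$ uniformly in $n$. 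If $p\le q$ this already controls the $L^p$-modulus on the bounded interval $(0,T)$; if $p>q$, one interpolates it with the uniform bound $\|u_n\|_{L^\infty(0,T;X_3)}\le C'$, which follows from the $BV^q$- and $L^p$-bounds together with the continuous embeddings $X_1\hookrightarrow X_2\hookrightarrow X_3$. Substituting back, choosing first $\varepsilon$ small and then $h$ small makes the right-hand side of the displayed inequality uniformly small, so (ii) holds; the compactness criterion then yields a subsequence of $\{u_n\}$ convergent in $L^p(0,T;X_2)$ (and when $1<p<\infty$, reflexivity of $X_1$ lets one identify this strong limit with a weak $L^p(0,T;X_1)$-limit when needed).

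I expect the main obstacle to be precisely this translation estimate in $X_3$: one must work with the $q$-variation seminorm rather than ordinary total variation, establish its superadditivity and the resulting $L^q$-in-time bound on the difference quotients via a careful covering argument, and then bridge the gap between the $L^q$-in-time information it provides and the $L^p$-in-time modulus of continuity demanded by the compactness criterion in the case $p>q$. The remaining ingredients --- Ehrling's inequality, the Hölder bound on the time integrals, and the application of the Fréchet--Kolmogorov/Simon criterion --- are routine.
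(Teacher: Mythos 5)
Your proof is correct and follows the standard Aubin--Lions--Simon route; the paper itself gives no proof of Proposition \ref{prop:Bartosz6} but defers to \cite{Kalita2013}, whose argument is essentially the one you outline (the translation estimate $\int_0^{T-h}\|x(t+h)-x(t)\|_{X_3}^q\,dt\le h\|x\|_{BV^q(0,T;X_3)}^q$ obtained by averaging over shifted grids of mesh $h$, Ehrling's lemma to pass from $X_3$ to $X_2$, and the Fr\'echet--Kolmogorov/Simon criterion). The steps you flag as delicate do go through, including the case $p>q$, where the uniform $L^\infty(0,T;X_3)$ bound supplied by the $BV^q$ seminorm together with the $L^p$ bound lets your interpolation close the gap.
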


\noindent The following version of Aubin-Celina  convergence theorem (see \cite{Aubin}) will be used in what follows.
\begin{proposition}\label{prop:Bartosz7}
	Let $X$ and $Y$ be Banach spaces and $F\colon X\to 2^Y$ be a multifunction such that
	\begin{itemize}
		\item[(a)] the values of $F$ are nonempty, closed and convex subsets of $Y$,
		\item[(b)] $F$ is upper semicontinuous from $X$ into $w-Y$.
	\end{itemize}
Let $x_n,x\colon(0,T)\to X$, $y_n,y\colon(0,T)\to Y$, $n\in \mathbb{N}$, be measurable functions such that $x_n(t)\to x(t)$ for a.e. $t\in (0,T)$ and $y_n\to y$ weakly in $L^1(0,T;Y)$. If $y_n(t)\in F(x_n(t))$ for all $n\in\mathbb{N}$ and a.e. $t\in (0,T)$, then $y(t)\in F(x(t))$ for a.e. $t\in (0,T)$.
\end{proposition}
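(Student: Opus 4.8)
The plan is to combine Mazur's lemma with the upper semicontinuity of $F$ and a Hahn--Banach separation argument carried out pointwise in $t$.

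First I would apply Mazur's lemma to the weakly convergent sequence $y_n\to y$ in $L^1(0,T;Y)$: there exists a sequence of convex combinations $z_m=\sum_{k\geq m}\lambda_{m,k}\,y_k$, with $\lambda_{m,k}\geq 0$, only finitely many $\lambda_{m,k}$ nonzero, and $\sum_k\lambda_{m,k}=1$, such that $z_m\to y$ strongly in $L^1(0,T;Y)$. Passing to a subsequence (not relabeled), I may assume in addition that $z_m(t)\to y(t)$ in $Y$ for a.e.\ $t\in(0,T)$. Since $x_n(t)\to x(t)$ for a.e.\ $t$, and for each $k$ the inclusion $y_k(t)\in F(x_k(t))$ holds for a.e.\ $t$, I fix a set of full measure on which all these properties hold simultaneously; from now on $t$ denotes a point of that set.

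Then I would argue by contradiction. Suppose $y(t)\notin F(x(t))$. By (a) the set $F(x(t))$ is nonempty, convex and norm closed, hence weakly closed, so the Hahn--Banach separation theorem yields $\xi\in Y^*$ and $\alpha\in\RR$ with $\dual{\xi,w}{Y}\leq\alpha<\dual{\xi,y(t)}{Y}$ for every $w\in F(x(t))$. One chooses $\varepsilon>0$ so that $\alpha+\varepsilon<\dual{\xi,y(t)}{Y}$ and sets $V=\{w\in Y:\dual{\xi,w}{Y}<\alpha+\varepsilon\}$, a weakly open, convex set containing $F(x(t))$. By (b) together with $x_k(t)\to x(t)$ there is $N$ with $F(x_k(t))\subset V$, hence $y_k(t)\in V$, for all $k\geq N$; since $V$ is convex and each $z_m(t)$ with $m\geq N$ is a convex combination of vectors $y_k(t)$ with $k\geq m\geq N$, one obtains $\dual{\xi,z_m(t)}{Y}<\alpha+\varepsilon$ for all $m\geq N$. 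Letting $m\to\infty$ and using $z_m(t)\to y(t)$ gives $\dual{\xi,y(t)}{Y}\leq\alpha+\varepsilon$, a contradiction. Hence $y(t)\in F(x(t))$ for a.e.\ $t\in(0,T)$.

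The argument is largely routine; I expect the only genuine points of care to be the bookkeeping with the various almost-everywhere statements — one works on the intersection of the full-measure set where $x_n(\cdot)\to x(\cdot)$, the full-measure set where the chosen subsequence satisfies $z_m(\cdot)\to y(\cdot)$, and the countable intersection of the full-measure sets where $y_k(\cdot)\in F(x_k(\cdot))$ — together with the correct reading of upper semicontinuity into the weakly topologized target, namely that norm convergence $x_k(t)\to x(t)$ forces $F(x_k(t))$ into any prescribed weakly open set containing $F(x(t))$ once $k$ is large. The use of a separating half-space (rather than a general weak neighborhood) is what makes the last point clean.
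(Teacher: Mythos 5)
Your proof is correct: the combination of Mazur's lemma (applied to tails, so that $z_m\in\mathrm{conv}\{y_k:k\ge m\}$), a.e.\ convergence of a subsequence of $z_m$, and the pointwise Hahn--Banach separation of $y(t)$ from the weakly closed convex set $F(x(t))$, with upper semicontinuity into $w$-$Y$ forcing $F(x_k(t))$ into the separating half-space, is exactly the standard argument for the Aubin--Cellina convergence theorem. The paper does not prove this proposition but merely cites the reference \cite{Aubin}, and your argument reproduces the proof given there, with the almost-everywhere bookkeeping handled correctly.
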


\noindent We recall a well known Young's inequality
\begin{equation}\label{Young}
ab\leq\varepsilon a^2+\frac{1}{4\varepsilon}b^2,
\end{equation}
for all $a,b\in\real$, $\varepsilon>0$.\\

\noindent We conclude this section with the following lemma, which is trivial, yet crucial in what follows.
\begin{lemma}\label{BWK} For a sequence $\{\delta_i\}_{i=1}^N\subset X$ it holds that
	\[
	\sum_{i=1}^N \|\delta_i\|_{X}^2 \le c\left(\|\delta_1\|_{X}^2 + \sum_{i=2}^N \left\|\frac32\delta_i-\frac12\delta_{i-1}\right\|_{X}^2\right)
	\]
with $c>0$.	
\end{lemma}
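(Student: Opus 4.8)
The plan is to observe that the combination on the right-hand side is nothing but a stable first-order recurrence for the increments $\delta_i$, and to exploit a convexity trick so that no geometric-series bookkeeping is needed. Set $\eta_i := \frac32\delta_i-\frac12\delta_{i-1}$ for $i=2,\dots,N$. Solving this relation for $\delta_i$ gives
\[
\delta_i=\tfrac23\,\eta_i+\tfrac13\,\delta_{i-1},\qquad i=2,\dots,N .
\]
The decisive point is that $\tfrac23+\tfrac13=1$, so $\delta_i$ is a \emph{convex combination} of $\eta_i$ and $\delta_{i-1}$. Hence, by the triangle inequality together with convexity of $t\mapsto t^2$ on $[0,\infty)$,
\[
\|\delta_i\|_X^2=\Big\|\tfrac23\eta_i+\tfrac13\delta_{i-1}\Big\|_X^2
\le\Big(\tfrac23\|\eta_i\|_X+\tfrac13\|\delta_{i-1}\|_X\Big)^2
\le\tfrac23\|\eta_i\|_X^2+\tfrac13\|\delta_{i-1}\|_X^2
\]
for every $i=2,\dots,N$.

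Next I would sum this inequality over $i=2,\dots,N$ and add $\|\delta_1\|_X^2$ to both sides. Since $\sum_{i=2}^N\|\delta_{i-1}\|_X^2=\sum_{i=1}^{N-1}\|\delta_i\|_X^2\le\sum_{i=1}^{N}\|\delta_i\|_X^2$, this yields
\[
\sum_{i=1}^N\|\delta_i\|_X^2\le\|\delta_1\|_X^2+\frac23\sum_{i=2}^N\|\eta_i\|_X^2+\frac13\sum_{i=1}^N\|\delta_i\|_X^2 .
\]
Absorbing the last term into the left-hand side gives $\frac23\sum_{i=1}^N\|\delta_i\|_X^2\le\|\delta_1\|_X^2+\frac23\sum_{i=2}^N\|\eta_i\|_X^2$, and therefore
\[
\sum_{i=1}^N\|\delta_i\|_X^2\le\frac32\|\delta_1\|_X^2+\sum_{i=2}^N\|\eta_i\|_X^2
\le\frac32\Big(\|\delta_1\|_X^2+\sum_{i=2}^N\|\eta_i\|_X^2\Big),
\]
which is exactly the claimed estimate, with the explicit constant $c=\tfrac32$.

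There is essentially no serious obstacle here; the only mildly non-obvious step is spotting the convex-combination structure (the weights $\tfrac23,\tfrac13$ summing to one), which is precisely what makes the absorption step work and lets one avoid expanding $\delta_i=\tfrac23\sum_{j=2}^{i}3^{-(i-j)}\eta_j+3^{-(i-1)}\delta_1$ and summing the resulting geometric series — an alternative route that also works but yields a worse constant and more computation. The one thing to verify carefully is that the inequality $\|\lambda a+(1-\lambda)b\|_X^2\le\lambda\|a\|_X^2+(1-\lambda)\|b\|_X^2$ for $\lambda\in[0,1]$ holds in an arbitrary normed space; it does, since it follows from the triangle inequality combined with convexity of the square function, so the lemma is valid verbatim in the Banach-space setting in which it will later be used.
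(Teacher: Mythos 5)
Your proof is correct and follows essentially the same strategy as the paper's: express $\delta_i$ (respectively $\tfrac32\delta_i$ in the paper) in terms of $\tfrac32\delta_i-\tfrac12\delta_{i-1}$ and $\delta_{i-1}$, apply an elementary quadratic inequality termwise, sum, and absorb the shifted sum into the left-hand side. The only difference is that you use the convex-combination form of Jensen's inequality where the paper uses the cruder bound $\|a+b\|_X^2\le 2\|a\|_X^2+2\|b\|_X^2$, which sharpens the constant from the paper's implicit $c=9/2$ to $c=3/2$ — immaterial here since the lemma only asserts some $c>0$.
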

\begin{proof} A simple calculation shows that
	\[
	\sum_{i=1}^N \|\delta_i\|_{X}^2\le \sum_{i=1}^N \left\|\frac{3}{2}\delta_i\right\|_{X}^2\le \left\|\frac32\delta_1\right\|_{X}^2 + 2\sum_{i=2}^N \left\|\frac32 \delta_i -\frac12\delta_{i-1}\right\|_{X}^2 + 2\sum_{i=1}^{N-1} \left\|\frac12\delta_i\right\|_{X}^2.
	\]
	Hence, it follows that
	\[
	\frac12 \sum_{i=1}^{N-1}\|\delta_i\|_{X}^2 + \|\delta _N\|_{X}^2\le \frac94\|\delta _1\|_{X}^2 + 2\sum_{i=2}^N \left\|\frac32\delta_i-\frac12 \delta_{i-1}\right\|_{X}^2,
	\]
	which yields the assertion. 
\end{proof}

\section{Problem formulation}\label{Sec_3}
In this section we formulate an abstract second order evolution inclusion involving Clarke subdifferential. We also impose assumptions on the data of the problem.\\
\indent Let $V$ be a real, reflexive, separable Banach space and $H$ be a real, separable Hilbert space equipped with  the inner product $(\cdot,\cdot)_H$ and the corresponding norm given by  $\|v\|_H=\sqrt{(v,v)_H}$ for all $v\in H$. For simplicity of notation, we will write  $|v|=\|v\|_H$, $(u,v)=(u,v)_H$ for all $u,v\in H$ and $\|v\|=\|v\|_V$, $\skalar{l,v}=\dual{l,v}{V}$ for all $v\in V$, $l\in V^*$.  Identifying $H$ with its dual we assume that the spaces $V,\, H$ and $V^*$ form an evolution triple, i.e., $V\subset H\subset V^*$ with all embeddings being dense and continuous. Moreover, we assume that the embedding $V\subset H$ is compact. Let $i\colon V\to H$ be an embedding operator (for $v\in V$ we still denote $iv\in H$ by $v$). For all $u\in H$ and $v\in V$ we have $\skalar{u,v}=(u,v)$. We also introduce a reflexive Banach space $U$ and the operator $\iota\in {\cal L}(V,U)$. For $T>0$ we denote by $[0,T]$ a time interval and introduce the following spaces of time dependent functions: ${\cal V}=L^2(0,T;V)$, ${\cal V^*}=L^2(0,T;V^*)$, ${\cal H}=L^2(0,T;H)$, ${\cal U}=L^2(0,T;U)$, ${\cal U^*}=L^2(0,T;U^*)$, equipped with their classical $L^2$ norms. We use notation $\dual{u,v}{\cal V}=\int_0^T\skalar{u(t), v(t)}dt$, for all $u\in {\cal V}^* ,v\in {\cal V}$, $(u,v)_{\cal H}=\int_0^T(u(t), v(t))dt$ for all $u,v\in {\cal H}$ and $\dual{u,v}{\cal U}=\int_0^T\dual{u(t), v(t)}{U}dt$ for all $u\in {\cal U}^*,v\in {\cal U}$. Finally, we define the space ${\cal W}=\{v\in {\cal V}\,\, |\,\,v'\in {\cal V^*}\}$. Hereafter, $v'$ denotes the time derivative of $v$ in the sense of distribution. \\
\indent We consider the operators $A, B\colon V\to V^*$ and the functions $j\colon U\to \real$, $f\colon [0,T]\to V^*$. Using the above notation we formulate the following problem.

\vskip3mm \noindent {\bf Problem ${\cal P}$}. {\it Find $u\in\cV$, with $u'\in \cW$ such that $u(0)=u_0$, $u'(0)=w_0$ and
	\begin{align}
	& u''(t)+Au'(t)+ Bu(t) +\iota^*\xi(t)=f(t)\quad \text{\it for\ a.e.}\
	\,t\in (0,T)\label{3.1}
	\end{align}
	\vspace{-1mm} with
	\begin{equation}
	\xi(t)\in \partial j(\iota u'(t))\quad \text{\it for\ a.e.}\
	\,t\in (0,T).\label{3.2}\end{equation} }

\noindent We now impose assumptions on the data of Problem  $\cP$. \\

%\medskip
\noindent {$H(A)$} The operator $A\colon V\to V^*$ satisfies
\begin{itemize}
	\item[(i)] $A$ is pseudomonotone,
	\item[(ii)] $\|Av\|_{V^*}\leq a+b\|v\|$ for all $v\in V$, with $a\geq 0$, $b>0$,
	\item[(iii)] $\skalar{Av,v}\geq \alpha\|v\|^2-\beta|v|^2$
	 for all $v\in V$ with $\alpha>0$, $\beta\geq 0$.
\end{itemize}

\medskip
\noindent {$H(B)$} The operator $B\colon V\to V^*$ is linear, bounded, symmetric and positive.

\medskip
\noindent {$H(j)$} The functional $j\colon U\to \real$ is
such that
\begin{itemize}
\item [(i)] $j$ is locally  Lipschitz\vspace{-1mm},
\item [(ii)] $\partial
j$ satisfies the following growth condition\vspace{-1mm}
\begin{align*}\| \xi \|_{U^{\ast}} \leq d(1+\|u\|_U) 
\end{align*}
for all $u\in U,\, \xi\in \partial j(u)$ with $d>0$.
\end{itemize}

%\medskip
\noindent {$H(\iota)$} The operator $\iota\colon V
\rightarrow U$ is linear, continuous and compact. Moreover,
there exists a Banach space $Z$ such that $V\subset Z\subset H$, where the embedding $V\subset Z$ is compact, the embedding $Z\subset H$ is continuous and the operator $\iota$ can be decomposed as $\iota=\iota_2\circ\iota_1 $, where $\iota_1\colon V\to Z$ denotes the (compact) identity mapping and  $\iota_2\in{\cal L}(Z,U)$. \\

%\medskip

\noindent {$H(f)$} $f \in L^2(0,T;V^*)$.\\

\noindent {$H(0)$}  $u_0 \in V$, $w_0\in H$.\\ 

We remark that Problem ${\cal P}$ represents an abstract formulation of a dynamic viscoelastic contact process between a physical body and a foundation. In particular, $A$ and $B$ are related to viscosity and elasticity operators, respectively and $f$ represents the density of a volume force acting on the body. Furthermore, the presence of the multivalued term $\partial j$ follows from the use of the nonsmooth and potentially nonmonotone friction law in the model. Finally, $\iota$ is related to the trace operator. For the precise description of the physical process we refer to Sections 6-7 of \cite{Bartosz_theta}, for instance.\\      

\noindent In the rest of the paper we always assume that assumptions $H(A)$, $H(B)$, $H(j)$, $H(\iota)$, $H(f)$ and $H(0)$ hold.

\begin{corollary}\label{Cor_3.1}
	For every $\varepsilon>0$ there exists $c_{\iota}(\varepsilon)>0$ such that
	\begin{align}\label{3.3}
	&\|\iota u\|_U\leq\varepsilon\|u\|+c_{\iota}(\varepsilon)|u|\quad\text{for all}\,\,\,u\in V.
	\end{align}
\end{corollary}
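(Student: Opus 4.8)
The plan is to deduce the estimate \eqref{3.3} directly from Ehrling's Lemma (Lemma \ref{Lemma_Ehrling}) applied to a suitable chain of spaces, using the decomposition of $\iota$ provided in $H(\iota)$. The key observation is that $H(\iota)$ supplies precisely the ingredients needed: a Banach space $Z$ with $V\subset Z\subset H$, where $V\subset Z$ is compact and $Z\subset H$ is continuous, together with the factorization $\iota=\iota_2\circ\iota_1$ with $\iota_1\colon V\to Z$ the (compact) identity and $\iota_2\in{\cal L}(Z,U)$.

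First I would apply Lemma \ref{Lemma_Ehrling} with $V_0=V$, $V=Z$ and $V_1=H$; the hypotheses are met because the embedding $V\subset Z$ is compact and $Z\subset H$ is continuous (these are exactly the assumptions in $H(\iota)$, and $V,Z,H$ are Banach spaces since $H$ is a Hilbert space). This yields: for every $\tilde\varepsilon>0$ there is $c(\tilde\varepsilon)>0$ with $\|u\|_Z\leq\tilde\varepsilon\|u\|+c(\tilde\varepsilon)|u|$ for all $u\in V$. Next, since $\iota_2\in{\cal L}(Z,U)$, there is a constant $M=\|\iota_2\|_{{\cal L}(Z,U)}\geq 0$ with $\|\iota_2 z\|_U\leq M\|z\|_Z$ for all $z\in Z$; and since $\iota_1$ is the identity, $\|\iota u\|_U=\|\iota_2 u\|_U\leq M\|u\|_Z$ for all $u\in V$. (If $M=0$ the estimate \eqref{3.3} is trivial, so we may assume $M>0$.)

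Combining the two bounds gives $\|\iota u\|_U\leq M\tilde\varepsilon\|u\|+Mc(\tilde\varepsilon)|u|$ for all $u\in V$. Given an arbitrary $\varepsilon>0$, I would then choose $\tilde\varepsilon=\varepsilon/M$ and set $c_\iota(\varepsilon)=Mc(\varepsilon/M)$, which produces exactly \eqref{3.3}.

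There is no real obstacle here: the statement is an immediate corollary of Ehrling's Lemma once the factorization in $H(\iota)$ is invoked, and the only point requiring a word of care is the harmless degenerate case $\|\iota_2\|_{{\cal L}(Z,U)}=0$, which can be dispatched in one line. The proof is short and computational in character, with all the analytic content already packaged in Lemma \ref{Lemma_Ehrling}.
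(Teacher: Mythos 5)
Your proposal is correct and follows essentially the same route as the paper: factor $\iota=\iota_2\circ\iota_1$, bound $\|\iota u\|_U\leq\|\iota_2\|\,\|\iota_1 u\|_Z$, and apply Lemma \ref{Lemma_Ehrling} to the triple $V\subset Z\subset H$. The only difference is that you spell out the rescaling $\tilde\varepsilon=\varepsilon/\|\iota_2\|$ and the trivial case $\|\iota_2\|=0$, which the paper leaves implicit.
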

\begin{proof}	
	It follows from $H(\iota)$ that $\|\iota u\|_U=\|\iota_2\circ \iota_1 u\|_U\leq \|\iota_2\|\|\iota_1 u\|_Z$. On the other hand, from Lemma \ref{Lemma_Ehrling}, we have $\|\iota_1 u\|_Z\leq \varepsilon\|u\|+c(\varepsilon)|u|$, which yields the assertion.
\end{proof}
\medskip 

\noindent The direct consequence of Corollary \ref{Cor_3.1} is following.
\begin{corollary}\label{Cor_3.2}
	For every $\varepsilon>0$, we have
	\begin{align}\label{3.4}
	&\|\iota u\|^2_U\leq\varepsilon\|u\|^2+\bar{c}_\iota(\varepsilon)|u|^2,
	\end{align}
	where $\bar{c}_\iota(\varepsilon)=2c_\iota^2(\sqrt{\varepsilon/2})$.
\end{corollary}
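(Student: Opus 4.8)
The plan is to obtain this inequality directly from Corollary~\ref{Cor_3.1} by a rescaling of the free parameter followed by squaring. First I would apply \eqref{3.3} with $\sqrt{\varepsilon/2}$ in place of $\varepsilon$; since $\varepsilon>0$ is arbitrary there, this is legitimate and yields
\[
\|\iota u\|_U\leq\sqrt{\varepsilon/2}\,\|u\|+c_\iota(\sqrt{\varepsilon/2})\,|u|\qquad\text{for all }u\in V.
\]
Then I would square both sides and invoke the elementary bound $(a+b)^2\leq 2a^2+2b^2$ (equivalently, Young's inequality \eqref{Young} with an appropriate choice of parameter), which gives
\[
\|\iota u\|^2_U\leq 2\Big(\tfrac{\varepsilon}{2}\|u\|^2+c_\iota^2(\sqrt{\varepsilon/2})\,|u|^2\Big)=\varepsilon\|u\|^2+\bar c_\iota(\varepsilon)|u|^2,
\]
with $\bar c_\iota(\varepsilon)=2c_\iota^2(\sqrt{\varepsilon/2})$, which is precisely the assertion.

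There is essentially no obstacle in this argument; the only point requiring a little care is the bookkeeping of constants, namely choosing the argument $\sqrt{\varepsilon/2}$ of $c_\iota$ so that, after squaring and absorbing the factor $2$, the coefficient of $\|u\|^2$ comes out to be exactly $\varepsilon$ rather than a fixed multiple of it. All the work has already been done in Corollary~\ref{Cor_3.1} (which in turn rests on the decomposition $\iota=\iota_2\circ\iota_1$ from $H(\iota)$ and Ehrling's Lemma~\ref{Lemma_Ehrling}), so the proof reduces to this one elementary step.
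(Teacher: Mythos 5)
Your proposal is correct and is precisely the argument the paper intends (the paper omits the proof, but the stated constant $\bar c_\iota(\varepsilon)=2c_\iota^2(\sqrt{\varepsilon/2})$ reveals exactly the substitution $\varepsilon\mapsto\sqrt{\varepsilon/2}$ in \eqref{3.3} followed by squaring with $(a+b)^2\le 2a^2+2b^2$). Nothing is missing.
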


\section{The Rothe problem}\label{Sec_4}

In this section we consider a semidiscrete approximation of Problem $\cP$ known as Rothe problem. Our goal is to study a solvability of the Rothe problem, to obtain a-priori estimates for its solution and to study the convergence of semidiscrete solution to the solution of the original problem as the discretization parameter converges to zero.\\
We start with a uniform division of the time interval. Let $N\in \nat$ be fixed and $\tau=T/N$ be a time step and write $t_n=n\tau$ for $n=0,\dots,N$. In the rest of the paper we denote by $C$ a generic positive constant independent on discretization parameters, that can differ from place to place. Moreover, if $C$ depends on $\varepsilon$, we write $C(\varepsilon)$.\\ 
Let $u_\tau^0, \,\wzero \in V$ be given and assume that
\begin{align}
&u_\tau^0\to u_0 \,\,\,\,\,\, \mbox{strongly in} \ V, \label{uzero}\\[2mm]
&\wzero \to w_0 \,\,\,\, \mbox{strongly in} \ H  \ \mbox{and weakly in} \ V \label{wzero}
\end{align}
as $\tau\to 0$ and
\begin{equation}\label{-1}
\|w_\tau^0\|\leq\frac{C}{\sqrt{\tau}} \,\,\,\, \mbox{for all} \ \tau>0.
\end{equation}

We now describe a concept of semidiscrete approximation scheme to be used in the analysis of Problem $\cP$. Suppose that $x\colon[0,T]\to X$ is a differentiable function and $x^n$ denotes its value  at  $t_n$ for $n=0,\dots,N$. Then, we approximate the derivative of $x$ at $t_n$ by the following formula
\begin{align}\label{double_step}
	x'(t_n)\simeq \frac{1}{\tau} \left( \frac{3}{2}x^n-2x^{n-1}+\frac{1}{2}x^{n-2} \right) \,\,\,\, \mbox{for} \,\,n=2,\dots, N. 
\end{align}
The formula (\ref{double_step}) is referred to as a double step approximation scheme. It is routine to check that (\ref{double_step}) becomes exact if $x$ is a polynomial of order $\leq 2$. In a general case, (\ref{double_step}) reflects the fact that $x'(t_n)$ is approximated by the derivative at the point $t_n$ of the unique polynomial of order $\leq 2$ interpolating the function $x$ at points $t_n, t_{n-1}$ and $t_{n-2}$. Note that (\ref{double_step}) involves two points preceding $t_n$, hence, it is not applicable to approximate $x'(t_1)$. In that case we can use, for instance, the implicit Euler scheme $x'(t_1)\simeq \frac{1}{\tau}(x^1-x^0)$.\\
Using the above idea, we approximate the right hand side of (\ref{3.1}). Let $F(t) = \int_0^t f(s)\, ds$. Then, we write $f(t)=F'(t)$ and 
introduce a sequence $\{\fntau\}_{n=1}^N\subset V^*$ approximating $f(t_n)$, $n=1,\dots,N$ as follows
\begin{align*}
f(t_1)=F'(t_1)\simeq f_\tau^1:=\frac{1}{\tau}(F(t_1)-F(t_0))=\frac{1}{\tau}\int_0^\tau f(t)dt	
\end{align*} 
and, for $n=2,\dots,N$,
\begin{align*}
	f(t_n) = F'(t_n)\simeq \fntau&:=\frac{1}{\tau}\bigg(\frac32 F(t_n)-2F(t_{n-1}) + \frac12 F(t_{n-2})\bigg) \\[2mm]
	& =\frac{1}{\tau}\bigg(\frac32 \int_0^{t_n} f(s)\, ds - 2\int_0^{t_{n-1}} f(s)\, ds + \frac12 \int_0^{t_{n-2}} f(s)\, ds\bigg)   \\[2mm]
	&=\frac{1}{\tau} \bigg(\frac32 \int_0^{t_n} f(s)\, ds - \frac32 \int_0^{t_{n-1}} f(s)\, ds - \frac12\int_0^{t_{n-1}} f(s)\, ds + \frac12 \int_0^{t_{n-2}} f(s)\, d\bigg)\\[2mm]
 &=\frac{3}{2\tau} \int_{t_{n-1}}^{t_n} f(s)\, ds -\frac{1}{2\tau} \int_{t_{n-2}}^{t_{n-1}} f(s)\, ds.
\end{align*}

Next, for an unknown function $u$, we write $u'=w$, hence, $u''=w'$. Our goal is to find sequences $\{\untau\}_{n=1}^N,\,\{\wntau\}_{n=1}^N\subset V$ approximating 
$u(t_n)$ and $w(t_n)$, respectively, for  $n=1,\dots,N$. To this end, we formulate the following semidiscrete double step Rothe problem corresponding to Problem $\cP$.

\bigskip \noindent{\bf Problem ${\cal P}_\tau$}. {\it Find 
	sequences \  $\{\untau\}_{n=1}^N,\,\, \{\wntau\}_{n=1}^N\subset V$ such that
	\begin{align}
	&\frac{1}{\tau} (\wone-\wzero) + A\wone + B\uone + \iota^*\xione =\fone, \label{1}\\[2mm]
	& \uone=\tau \wone+\uzero, \label{1b}\\[2mm]
	&\xione \in \partial j(\iota\wone). \label{1a}
	 	\end{align}}
and for $n=2,...,N$ 
\begin{align}
&\frac{1}{\tau}\left(\frac32 \wntau-2\wntauminus + \frac12 \wntauminuss\right) +  A\wntau +  B\untau +  \iota^*\xintau =  \fntau, \label{2}\\[2mm]
& \untau=\frac23\left(\tau\wntau+2\untauminus-\frac12\untauminuss\right),\label{2b}\\[2mm]
&	\xintau \in \partial j (\iota\wntau).\label{2a}
\end{align} 	

The first term of (\ref{1}) approximates $w'(t_1)$ by means of the implicit Euler scheme. Moreover, (\ref{1b}) is equivalent with $\wone=\frac{1}{\tau}(\uone-\uzero)$, which reflects the fact that $w(t_1)=u'(t_1)$ is approximated in the same way. The first term of (\ref{2}) approximates $w'(t_n)$ by means of the double step scheme (\ref{double_step}). Moreover, (\ref{2b}) is equivalent with $\wntau=\frac{1}{\tau}(\frac{3}{2}u_\tau^n-2u_\tau^{n-1}+\frac{1}{2}u_\tau^{n-2})$, which reflects the fact that $w(t_n)=u'(t_n)$ is approximated by formula (\ref{double_step}) for $n=2,\dots,N$.\\ 
  
We now deal with the solvability of Problem $\cP_\tau$.

\begin{theorem}\label{Th_4.1}
Under assumptions $H(A)$, $H(B)$, $H(j)$, $H(\iota)$, $H(f)$ and $H(0)$ there exists $\tau_0>0$ such that for all $0<\tau<\tau_0$, Problem $\cP_\tau$ has a solution.   
\end{theorem}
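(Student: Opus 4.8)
\medskip
\noindent\textit{Proof strategy.}\ The plan is to build the triples $(\wone,\uone,\xione)$ and $(\wntau,\untau,\xintau)$, $n=2,\dots,N$, recursively, at each step rewriting the discrete system as a single stationary inclusion for the new velocity iterate and applying the surjectivity result of Proposition~\ref{prop:Bartosz5}. For $n=1$, substituting (\ref{1b}) into (\ref{1}) and using (\ref{1a}), one has to find $\wone\in V$ such that
\begin{equation}\nonumber
\tfrac{1}{\tau}\wone+A\wone+\tau B\wone+\iota^*\partial j(\iota\wone)\ni\fone+\tfrac{1}{\tau}\wzero-B\uzero,
\end{equation}
after which $\uone$ is recovered from (\ref{1b}). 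For $n\ge2$, (\ref{2b}) gives $\untau=\tfrac{2\tau}{3}\wntau+\tfrac{4}{3}\untauminus-\tfrac{1}{3}\untauminuss$; substituting this into (\ref{2}) and using (\ref{2a}), one has to find $\wntau\in V$ such that
\begin{equation}\nonumber
\tfrac{3}{2\tau}\wntau+A\wntau+\tfrac{2\tau}{3}B\wntau+\iota^*\partial j(\iota\wntau)\ni\fntau+\tfrac{1}{\tau}\big(2\wntauminus-\tfrac{1}{2}\wntauminuss\big)-B\big(\tfrac{4}{3}\untauminus-\tfrac{1}{3}\untauminuss\big),
\end{equation}
and then $\untau$ is recovered from (\ref{2b}). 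In both cases, once the previous iterates are fixed the right-hand side is a fixed element of $V^*$, so the task reduces to proving surjectivity of $L_1(v)=\tfrac{1}{\tau}v+Av+\tau Bv+\iota^*\partial j(\iota v)$, respectively $L_n(v)=\tfrac{3}{2\tau}v+Av+\tfrac{2\tau}{3}Bv+\iota^*\partial j(\iota v)$, as a multivalued operator from $V$ into $2^{V^*}$.

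To apply Proposition~\ref{prop:Bartosz5} I would verify that $L_1$ and $L_n$ are pseudomonotone, bounded and coercive. For pseudomonotonicity: $A$ is pseudomonotone by $H(A)$(i); the map $v\mapsto\tfrac{c}{\tau}v$, viewed as an operator $V\to V^*$ through the embeddings $V\subset H\subset V^*$, is linear, continuous and monotone (since $\skalar{v,v}=|v|^2\ge0$), hence pseudomonotone; $B$ is linear, bounded and monotone by $H(B)$, hence pseudomonotone; and $v\mapsto\iota^*\partial j(\iota v)$ is pseudomonotone by Proposition~\ref{Prop_2.1}, whose growth hypothesis is precisely $H(j)$(ii) and whose compactness hypothesis is $H(\iota)$. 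Proposition~\ref{Prop_sum_pseudo} then yields pseudomonotonicity of $L_1$ and $L_n$. Boundedness follows at once from $H(A)$(ii), the boundedness of $B$, the growth condition $H(j)$(ii), and the continuity of $\iota$ and $\iota^*$.

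The main point, and the place where the restriction $\tau<\tau_0$ enters, is coercivity. For $v\in V$ and $v^*\in L_n(v)$, writing $v^*=\tfrac{3}{2\tau}v+Av+\tfrac{2\tau}{3}Bv+\iota^*\xi$ with $\xi\in\partial j(\iota v)$, and combining $H(A)$(iii), positivity of $B$, the growth condition $H(j)$(ii), and Corollaries~\ref{Cor_3.1} and~\ref{Cor_3.2}, I expect an estimate of the form, valid for every $\varepsilon>0$,
\begin{equation}\nonumber
\skalar{v^*,v}\ge\Big(\tfrac{3}{2\tau}-\beta-d\,\bar c_\iota(\varepsilon)\Big)|v|^2+(\alpha-d\varepsilon)\|v\|^2-d\varepsilon\|v\|-d\,c_\iota(\varepsilon)|v|,
\end{equation}
and the analogous inequality for $L_1$ with $\tfrac{3}{2\tau}$ replaced by $\tfrac{1}{\tau}$. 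I would then fix $\varepsilon$ with $d\varepsilon\le\alpha/2$ (this depends only on the data and fixes $c_\iota(\varepsilon),\bar c_\iota(\varepsilon)$) and set $\tau_0=\big(\beta+d\,\bar c_\iota(\varepsilon)\big)^{-1}$. For $0<\tau<\tau_0$ the coefficient of $|v|^2$ is nonnegative in both cases, so, absorbing the linear terms by means of $|v|\le C\|v\|$, one gets $\skalar{v^*,v}\ge\tfrac{\alpha}{2}\|v\|^2-C\|v\|$ for all $v^*\in L_n(v)$ (resp. $L_1(v)$); since the domains of $L_1$ and $L_n$ are all of $V$, this gives coercivity.

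By Proposition~\ref{prop:Bartosz5}, $L_1$ and every $L_n$ are surjective. Hence, starting from the given $\uzero,\wzero$, there exist $\wone\in V$ and $\xione\in\partial j(\iota\wone)$ solving the reduced first inclusion, and defining $\uone$ by (\ref{1b}) gives a solution of (\ref{1})--(\ref{1a}); proceeding inductively, given $\untauminus,\wntauminus,\untauminuss,\wntauminuss$, surjectivity of $L_n$ provides $\wntau\in V$ and $\xintau\in\partial j(\iota\wntau)$ solving the reduced $n$-th inclusion, and (\ref{2b}) defines $\untau$ so that (\ref{2})--(\ref{2a}) hold. After $N$ steps the desired sequences are obtained, which proves the theorem. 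The step I expect to be the main obstacle is the coercivity estimate — specifically, correctly absorbing the $|v|^2$ contributions coming from $H(A)$(iii) and from the subdifferential term via Ehrling's lemma (Corollary~\ref{Cor_3.1}), and thereby identifying the threshold $\tau_0$; the remaining structural properties follow directly from the hypotheses and the cited propositions.
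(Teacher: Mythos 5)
Your proposal is correct and follows essentially the same route as the paper: at each time step the discrete system is reduced to a stationary inclusion for the velocity iterate, the associated multivalued operator (your $L_1$, $L_n$ are exactly the paper's auxiliary operators $T_0$, $T$ divided by $\tau$) is shown to be bounded, pseudomonotone and, for $\tau$ below a threshold determined by the Ehrling-type estimates, coercive, and Proposition~\ref{prop:Bartosz5} then gives surjectivity and the inductive construction. The coercivity computation and the choice of $\tau_0$ match the paper's argument up to the harmless replacement of Young's inequality by explicit linear terms.
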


\noindent Before we prove Theorem \ref{Th_4.1}, we construct two auxiliary multivalued operators $T_0, T\colon V\to 2^{V^*}$  given by
\begin{align*}
&T_0v = i^*i v + \tau Av + \tau B(\tau v ) + \tau \iota^*\partial j(\iota v)\quad\quad\quad \text{for all}\,\,\, v\in V,\\
&Tv=\frac32i^*iv +\tau Av + \tau B\left(\frac23 \tau v\right)+ \tau \iota^*\partial j(\iota v)\quad \text{for all}\,\,\, v\in V,
\end{align*}
where $\tau>0$ is given.\\

\indent We now provide three lemmata on the properties of operators $T_0$ and $T$. 
\begin{lemma}\label{lemma_T_coercive}
Under the assumptions $H(A)$, $H(B)$, $H(j)$ and $H(\iota)$ there exists $\tau_0>0$, such that for all $0<\tau<\tau_0$ operators $T_0$ and $T$ are coercive. 
\end{lemma}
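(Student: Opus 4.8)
The plan is to verify the coercivity condition directly from the definition, estimating from below the quantity $\skalar{v^*,v}$ for $v^*\in T_0 v$ (resp.\ $v^*\in Tv$), uniformly in the subgradient choice. Since $D(T_0)=D(T)=V$ is unbounded, I must show that the ratio $\skalar{v^*,v}/\|v\|$ tends to $+\infty$ as $\|v\|\to\infty$. First I would fix $v\in V$ and an arbitrary $\xi\in\partial j(\iota v)$, so that for $T_0$ the corresponding element of $T_0 v$ is $i^*iv+\tau Av+\tau B(\tau v)+\tau\iota^*\xi$. I would then compute term by term: $\skalar{i^*iv,v}=|v|^2\ge 0$; by $H(B)$ the operator $B$ is linear, bounded, symmetric and positive so $\skalar{\tau B(\tau v),v}=\tau^2\skalar{Bv,v}\ge 0$; by $H(A)(iii)$, $\skalar{\tau Av,v}\ge\tau(\alpha\|v\|^2-\beta|v|^2)$; and the subgradient term is bounded using $H(j)(ii)$ together with Corollary~3.1: $|\skalar{\tau\iota^*\xi,v}|=\tau|\dual{\xi,\iota v}{U}|\le\tau d(1+\|\iota v\|_U)\|\iota v\|_U$, and then $\|\iota v\|_U\le\varepsilon\|v\|+c_\iota(\varepsilon)|v|$ with $\varepsilon$ chosen small.

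Collecting these, I get a lower bound of the shape
\[
\skalar{v^*,v}\ \ge\ \tau\alpha\|v\|^2 - \tau\beta|v|^2 + |v|^2 - \tau d\big(\varepsilon\|v\|+c_\iota(\varepsilon)|v|\big)^2 - \tau d\big(\varepsilon\|v\|+c_\iota(\varepsilon)|v|\big).
\]
Expanding the square and using Young's inequality \eqref{Young} to absorb the cross term $\|v\|\,|v|$, the $\|v\|^2$-coefficient becomes $\tau(\alpha - d\varepsilon^2 - \text{(small)})$, which is positive once $\varepsilon$ is small enough; the $|v|^2$-terms combine to $(1-\tau\beta - \tau d\,c_\iota(\varepsilon)^2 - \text{(small)})|v|^2$, which is nonnegative for $\tau$ below some threshold $\tau_0$ (here I would also need $\tau_0\le 1$ and $\tau_0$ small relative to $\beta$, $c_\iota(\varepsilon)$); and the remaining linear term $-\tau d(\varepsilon\|v\|+c_\iota(\varepsilon)|v|)$ is $O(\|v\|)$ (since $|v|\le C\|v\|$ by continuity of $V\subset H$), hence negligible after dividing by $\|v\|$. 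Thus $\skalar{v^*,v}/\|v\|\ge \tfrac12\tau\alpha\|v\| - C$ for all $v^*\in T_0 v$, which forces the infimum over $v^*\in T_0 v$ to the same bound, and letting $\|v\|\to\infty$ gives coercivity of $T_0$. The argument for $T$ is identical, the only changes being the harmless factors $\tfrac32$ in front of $i^*iv$ (still gives a nonnegative contribution) and $\tfrac23$ inside $B$ (still nonnegative by positivity of $B$), so the same $\tau_0$ works.

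I expect the only delicate point to be the bookkeeping in the choice of the two small parameters: $\varepsilon$ must be fixed first, small enough (depending only on $\alpha$ and $d$) to keep the $\|v\|^2$-coefficient bounded below by, say, $\tfrac12\tau\alpha$ after the Young-inequality splitting; and only then is $\tau_0$ chosen, small enough (depending on $\beta$, $d$, $c_\iota(\varepsilon)$ and the embedding constant of $V\subset H$) to keep the $|v|^2$-coefficient nonnegative. One should also note that the quadratic subgradient bound contributes a term $\tau d\varepsilon^2\|v\|^2$ that is genuinely of order $\|v\|^2$, so it must be beaten by $\tau\alpha\|v\|^2$ rather than by a lower-order term — this is exactly why $\varepsilon$ has to be taken small independently of $\tau$. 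Everything else is the routine term-by-term estimation sketched above.
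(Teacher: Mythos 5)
Your proposal is correct and follows essentially the same route as the paper: a term-by-term lower bound on $\skalar{v^*,v}$ using $H(A)$(iii), positivity of $B$, and the growth condition $H(j)$(ii) combined with the Ehrling-type estimate for $\|\iota v\|_U$, fixing $\varepsilon$ first so that the $\|v\|^2$-coefficient stays positive and then choosing $\tau_0$ so that the $|v|^2$-coefficient is nonnegative. The only cosmetic difference is that you invoke Corollary \ref{Cor_3.1} and Young's inequality where the paper uses the squared version, Corollary \ref{Cor_3.2}, directly.
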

\begin{proof}
We restrict the proof to the case of operator $T_0$ and remark that the proof of coercivity of operator $T$ is analogous.	
Let $v\in V$ and let $v^*\in T(v)$. It means that $v^*=i^*i v + \tau Av + \tau B(\tau v) + \tau\iota^*\xi$, with $\xi\in\partial j(\iota v)$. Our goal is to estimate $\skalar{v^*,v}$.  
We start with the last term. Namely, by $H(j)$(ii) and Corollary \ref{Cor_3.2}, we get
\begin{align}\label{poprawki_15}
&	\nonumber \tau \langle \iota^*\xi,v\rangle  = \tau \langle \xi, \iota v\rangle_{U^*\times U} \ge -\tau \|\xi\|_{U^*}\|\iota v\|_U  \\[2mm]
&	\ge \nonumber -\tau d(1+\|\iota v\|_U)\|\iota v\|_U = -\tau d \|\iota v\|_U - \tau d\|\iota v\|_U^2  \\[2mm]
&	\ge \nonumber -\tau\left(\frac12 \|\iota v\|_U^2 + \frac12 d^2\right) - \tau d\|\iota v\|_U^2 = -\tau \left(\frac12 + d\right)\|\iota v\|_U^2 -  \frac12\tau d^2  \\[2mm]
&	\ge -\tau\left(\frac12 +d\right) \varepsilon \|v\|^2 - \tau\left(\frac12 +d\right) \bar{c}_\iota(\varepsilon) |v|^2 -\frac12 \tau  d^2.	
\end{align} 	
for all $\varepsilon>0$.
Next, using $H(A)$(iii), $H(B)$ and (\ref{poprawki_15}), we obtain
\begin{align*}
&\skalar{T_0 v^*, v}=|v|^2 + \tau \skalar{Av,v} + \tau^2 \skalar{B v,v} +\tau \langle \iota^*\xi,v\rangle\\[2mm]
&\geq\left(1-\beta-\tau\left(\frac12+d\right) \bar{c}_\iota(\varepsilon) \right)|v|^2+\tau\left(\alpha-\left(\frac12+d\right)\varepsilon\right)\|v\|^2-\frac12 \tau  d^2.
\end{align*}
We take $\varepsilon$ small enough such that the coefficient at $\|v\|^2$ is positive. Then its clear that $T_0$ is coercive for every $\tau>0$ for which the coefficient at $|v|^2$ is positive. 
\end{proof}	

\begin{lemma}\label{lemma_T_bounded}
	Under the assumptions $H(A)$, $H(B)$, $H(j)$ and $H(\iota)$ operators $T_0$ and $T$ are bounded. 
\end{lemma}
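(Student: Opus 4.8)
The plan is to estimate $\|v^*\|_{V^*}$ for an arbitrary $v\in V$ and $v^*\in T_0v$ (the argument for $T$ being identical up to the numerical constants), using the four summands in the definition of $T_0$ separately and then the triangle inequality. Writing $v^*=i^*iv+\tau Av+\tau B(\tau v)+\tau\iota^*\xi$ with $\xi\in\partial j(\iota v)$, I would bound each term as a linear function of $\|v\|$: the term $i^*iv$ is controlled by $\|i^*iv\|_{V^*}\le\|i\|^2\|v\|$ using that $i\colon V\to H$ is continuous and $i^*\colon H\to V^*$ has the same norm; the term $\tau Av$ is controlled by $H(A)(ii)$, giving $\tau\|Av\|_{V^*}\le\tau(a+b\|v\|)$; the term $\tau B(\tau v)=\tau^2 Bv$ is controlled by $H(B)$ (boundedness of the linear operator $B$), giving $\tau^2\|B\|\,\|v\|$; and the last term is controlled by the growth condition $H(j)(ii)$ together with the continuity of $\iota$: $\tau\|\iota^*\xi\|_{V^*}\le\tau\|\iota\|\,\|\xi\|_{U^*}\le\tau\|\iota\|\,d(1+\|\iota v\|_U)\le\tau\|\iota\|\,d(1+\|\iota\|\,\|v\|)$.

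Summing the four estimates gives $\|v^*\|_{V^*}\le a_0+b_0\|v\|$ with constants $a_0,b_0$ depending only on $\|i\|$, $\|\iota\|$, $\|B\|$, $a$, $b$, $d$ and (monotonically) on $\tau$; since in the relevant regime $0<\tau<\tau_0$ one has $\tau\le\tau_0$, these constants can be taken uniform in $\tau$. This shows $T_0$ maps bounded sets of $V$ into bounded sets of $V^*$, i.e. $T_0$ is bounded. One small point worth spelling out: the values $\partial j(\iota v)$ are nonempty (so that the above makes sense for some $\xi$) and the growth bound applies uniformly to every element of $\partial j(\iota v)$, so the estimate on $\|v^*\|_{V^*}$ holds for every selection $v^*\in T_0v$, which is exactly what boundedness of a multivalued operator requires.

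There is no real obstacle here; the statement is a routine consequence of the structural hypotheses. The only thing to be careful about is bookkeeping — making sure each of the four terms is estimated with the correct norm and that the compact/continuous embeddings $H(\iota)$ are invoked only through the continuity of $\iota$ (compactness is not needed for boundedness). I would simply remark that the proof for $T$ is the same, with $i^*iv$ replaced by $\tfrac32 i^*iv$ and $\tau B(\tau v)$ by $\tau B(\tfrac23\tau v)=\tfrac23\tau^2 Bv$, which changes only the numerical constants.
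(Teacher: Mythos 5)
Your proposal is correct and follows essentially the same route as the paper: the authors likewise treat $T_0$ as a sum of four operators, bound $i^*iv$ by $\|i\|^2\|v\|$, invoke $H(A)$(ii) for $\tau A$, $H(B)$ for $\tau B(\tau v)$, and $H(j)$(ii) with $H(\iota)$ for the subdifferential term, then conclude by summation; your version merely writes out the explicit affine estimates that the paper leaves implicit.
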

\begin{proof}
	Clearly, the operator $V\ni v\to i^*iv\in V^*$ is bounded, as $\|i^*iv\|_{V^*}\leq\|i\|^2\|v\|$. The boundedness of operator $\tau A$ and the operator $V\ni v\to \tau B(\tau v)\in V^*$ follows from assumptions $H(A)$(ii) and $H(B)$, respectively. Finally, the operator $V\ni v\to \tau\iota^*\partial j(\iota v)\in 2^{V^*}$ is bounded by assumptions $H(j)$(ii) and $H(\iota)$. Hence, $T_0$ is bounded as a sum of bounded operators. The boundedness of $T$ follows from analogous arguments. 
\end{proof}

\begin{lemma}\label{lemma_T_pseudomonotone}
	Under the assumptions $H(A)$, $H(B)$, $H(j)$ and $H(\iota)$ operators $T_0$ and $T$ are pseudomonotone. 
\end{lemma}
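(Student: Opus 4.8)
The plan is to show that each summand in the definitions of $T_0$ and $T$ is pseudomonotone and then invoke Proposition \ref{Prop_sum_pseudo} together with the fact that a finite sum of pseudomonotone operators is pseudomonotone. Since the arguments for $T_0$ and $T$ are entirely parallel (they differ only by positive multiplicative constants in the lower-order terms), I would write the proof for $T_0$ and remark that $T$ is handled in the same way.

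First I would treat the bounded linear terms. The operator $V \ni v \mapsto i^*iv \in V^*$ is linear, bounded and, because $\skalar{i^*iv,v} = |v|^2 \geq 0$, it is monotone; hence it is pseudomonotone (a bounded monotone, equivalently maximal monotone since it is everywhere defined, single-valued operator is pseudomonotone — this is classical, e.g.\ \cite{Zeidler1990}). The same reasoning applies to $V \ni v \mapsto \tau B(\tau v) = \tau^2 Bv \in V^*$: by $H(B)$ this operator is linear, bounded, symmetric and positive, hence monotone, hence pseudomonotone. Next, $\tau A$ is pseudomonotone because $A$ is pseudomonotone by $H(A)$(i) and pseudomonotonicity is invariant under multiplication by a positive scalar (this is immediate from Definition \ref{Def_pseudomonotone_single}).

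The only term that requires genuine work is the multivalued one, $V \ni v \mapsto \tau\,\iota^*\partial j(\iota v) \in 2^{V^*}$. Here I would apply Proposition \ref{Prop_2.1} with the spaces $X = V$ and $U = U$ and the compact linear operator $\iota$ supplied by $H(\iota)$: the functional $j \colon U \to \real$ is locally Lipschitz by $H(j)$(i), and its Clarke subdifferential satisfies the growth bound $\|\xi\|_{U^*} \leq d(1+\|u\|_U)$ by $H(j)$(ii), which is exactly the hypothesis of Proposition \ref{Prop_2.1} with $c = d$. Hence $v \mapsto \iota^*\partial j(\iota v)$ is pseudomonotone, and multiplying by the positive constant $\tau$ preserves this (the three conditions of Definition \ref{Def_pseudomonotone_multi} — nonempty bounded closed convex values, upper semicontinuity into $w$-$V^*$, and the limsup/liminf inequality — are each stable under scaling by $\tau > 0$).

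Finally I would assemble the pieces. Writing $T_0 v = A_1 v + A_2 v$ with $A_1 v = i^*iv + \tau Av + \tau^2 Bv$ (single-valued, pseudomonotone as a finite sum of pseudomonotone single-valued operators, using Proposition \ref{Prop_sum_pseudo} viewing each as a multivalued operator with singleton values) and $A_2 v = \tau\,\iota^*\partial j(\iota v)$ (multivalued, pseudomonotone by the previous paragraph), Proposition \ref{Prop_sum_pseudo} gives that $T_0 = A_1 + A_2$ is pseudomonotone. The identical decomposition with the coefficients $\tfrac32$ in front of $i^*i$ and $\tfrac23\tau^2$ in front of $B$ shows $T$ is pseudomonotone. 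I do not anticipate a serious obstacle here; the main point to get right is simply the bookkeeping that pseudomonotonicity of a single-valued operator is subsumed under the multivalued definition (so that Proposition \ref{Prop_sum_pseudo} applies uniformly), and that all the relevant properties survive multiplication by the fixed positive constant $\tau$.
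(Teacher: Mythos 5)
Your proof is correct and follows essentially the same route as the paper: each summand is shown to be pseudomonotone (the linear terms because they are bounded and monotone, $\tau A$ by $H(A)$(i), and the multivalued term via Proposition \ref{Prop_2.1}), and the conclusion follows from Proposition \ref{Prop_sum_pseudo}. The extra bookkeeping you supply about invariance under scaling by $\tau>0$ and about single-valued operators fitting the multivalued definition is implicit in the paper's argument but harmless to spell out.
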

\begin{proof}
	 The operators $V\ni v\to i^*iv\in V^*$ and $V\ni v\to \tau B\tau v\in V^*$ are pseudomonotone, as they are linear and monotone. The operator $\tau A$ is pseudomonotone by assumption $H(A)$(i). Finally, the operator $V\ni v\to \tau\iota^*\partial j(\iota v)\in 2^{V^*}$ is pseudomonotone by assumptions $H(j)$(ii), $H(\iota)$ and Proposition \ref{Prop_2.1}. Hence, $T_0$ is pseudomonotone as a sum of pseudomonotone operators (see Proposition \ref{Prop_sum_pseudo}). The pseudomonotonicity of $T$ follows from analogous arguments. 
\end{proof}
\medskip
\medskip

\indent We are now in a position to present the proof of Theorem \ref{Th_4.1}.\\

\noindent \begin{proof} From lemmata \ref{lemma_T_coercive}-\ref{lemma_T_pseudomonotone} and Proposition \ref{prop:Bartosz5} it follows that operators $T_0$ and $T$ are surjections. Hence, in particular, there exists $\wone\in V$, such that
	%We can formulate \eqref{1} as: find $\wone\in V$ such that 
	\begin{equation}\label{poprawki_17} 
	\tau \fone + i^*i\wzero -\tau Bu_\tau^0\in T_0\wone.
	\end{equation}
	Using $\wone$ derived from (\ref{poprawki_17}), we recover $\uone$ from (\ref{1b}) and observe that (\ref{poprawki_17})
	 is equivalent with \eqref{1}, in which the element $u_\tau^1$ is given by (\ref{1b}) and $\xi_\tau^1$ satisfies (\ref{1a}). 
	Suppose that we have already given $\untauminuss$, $\untauminus$, $\wntauminuss$ and $\wntauminus$ for $n=2,\ldots,N$.  Since $T$ is surjection, there exists $w_\tau^n\in V$, such that
	\begin{equation}\label{poprawki_18}
	\tau \fntau + 2 i^*i\wntauminus -\frac12 i^*i\wntauminuss -\tau B\left(\frac43 \untauminus-\frac13\untauminuss\right) \in T\wntau.
	\end{equation}	
Using $\wntau$ derived from (\ref{poprawki_18}), we recover $\untau$ from (\ref{2b}) and observe that (\ref{poprawki_18})
is equivalent with \eqref{2}, in which the element $u_\tau^n$ is given by (\ref{2b}) and $\xi_\tau^n$ satisfies (\ref{2a}). \\
In this way, we construct the entire sequences  $\{\untau\}_{n=1}^N, \{\wntau\}_{n=1}^N$ solving Problem $\cP_\tau$, which completes the proof. 
\end{proof}\\

%\begin{align*}
%&	\uone = \tau \wone + \uzero, \\[2mm]
%&	(\wone-\wzero) + \tau A\wone + \tau B\uone + \tau \iota^* \xione = \tau \fone,\quad \xione \in \partial j(\iota\wone) \\[2mm]
%&	\wntau = \frac{1}{\tau} (\frac32 \untau - 2\untauminus + \frac12 \untauminuss), \\[2mm]
%&	\frac32 \wntau-2\wntauminus + \frac12 \wntauminuss + \tau A\wntau + \tau B\untau + \tau \iota^*\xintau = \tau \fntau,\quad \text{for}\,\,\,n=2,\dots, N\\[2mm]
%&	\xintau \in \partial j (\iota\wntau).
%\end{align*}

 We now deal with a-priori bounds for the solution of Problem $\cP_\tau$.
\begin{lemma}\label{Lemma_apriori}
Let $\{\untau\}_{n=1}^N$, $\{\wntau\}_{n=1}^N$ be a solution of Problem $\cP_\tau$ and let $\{\xintau\}_{n=1}^N\subset U^*$ be a corresponding sequence satisfying (\ref{1a}) and (\ref{2a}). Then, we have
\begin{align}
&\label{a1}\tau \sum_{n=0}^N \|\wntau\|^2\le C,\\[2mm]
&\label{a2}\max_{n=0,...,N}|\wntau|<C,\\[2mm]
&\label{a3}\tau \sum_{n=1}^N \left\|\xintau\right\|_{U^*}^2\le C,\\[2mm]
&\label{a4}\tau \left\|\frac{w_\tau^1-w_\tau^0}{\tau}\right\|_{V^*}^2 \le C,\\[2mm]
&\label{a5}\tau \sum_{n=2}^N\left\|\frac{1}{\tau}\left(\frac{3}{2}\wntau -2\wntauminus +\frac{1}{2}\wntauminuss\right)\right\|_{V^*}^2 \le C,\\[2mm]
&\label{a6}\sum_{n=2}^N \left|\wntau-2\wntauminus+\wntauminuss\right|^2\le C, \\[2mm]
&\label{a7}\max_{n=0,...,N}\|\untau\|<C.
\end{align}
\end{lemma}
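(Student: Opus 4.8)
The strategy is the classical one for second order Rothe schemes: test the discrete momentum balance with the discrete velocity $\tau\wntau$, sum over $n$, and telescope. The only new ingredient, forced by the double step quotient, is the BDF2 (Dahlquist--Nevanlinna--Odeh) \emph{G-stability} identity, which I would state and verify by direct expansion: for any symmetric positive semidefinite bilinear form with associated seminorm $|\cdot|$ and any $a_{n-2},a_{n-1},a_n$,
\[\Big(\tfrac32 a_n-2a_{n-1}+\tfrac12 a_{n-2},\,a_n\Big)=G_n-G_{n-1}+\tfrac14\big|a_n-2a_{n-1}+a_{n-2}\big|^2,\quad\text{with }\ G_n:=\tfrac14\big(|a_n|^2+|2a_n-a_{n-1}|^2\big).\]
I would apply this with $a_n=\wntau$ in $H$ (legitimate since $\tfrac32\wntau-2\wntauminus+\tfrac12\wntauminuss\in V\subset H$ and $\skalar{h,v}=(h,v)$ for $h\in H$, $v\in V$), and with $a_n=\untau$ for the bilinear form $\skalar{B\cdot,\cdot}$, which by $H(B)$ is symmetric and positive semidefinite, with seminorm $\|v\|_B^2:=\skalar{Bv,v}$.

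First I would dispose of the initial level $n=1$. Pairing \eqref{1} with $\tau\wone$ and using $\tau\wone=\uone-\uzero$ from \eqref{1b}, the inertial and elastic terms become $\tfrac12(|\wone|^2-|\wzero|^2+|\wone-\wzero|^2)$ and $\tfrac12(\|\uone\|_B^2-\|\uzero\|_B^2+\|\uone-\uzero\|_B^2)$; bounding the viscosity term below by $H(A)(iii)$, the multivalued term by $H(j)(ii)$ and Corollary~\ref{Cor_3.2}, and the source term by Young's inequality \eqref{Young} together with $\tau\|\fone\|_{V^*}^2\le\|f\|^2_{L^2(0,T;V^*)}$, then choosing $\varepsilon$ small in Corollary~\ref{Cor_3.2} and $\tau$ small, I expect $|\wone|\le C$ and $\tau\|\wone\|^2\le C$, hence $\|\wone\|\le C\tau^{-1/2}$ and $\|\uone\|\le\tau\|\wone\|+\|\uzero\|\le C$.

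The core step is the global estimate. For $n\ge2$ I pair \eqref{2} with $\tau\wntau$, use \eqref{2b} in the form $\tau\wntau=\tfrac32\untau-2\untauminus+\tfrac12\untauminuss$, apply the G-stability identity to the inertial term (in $H$) and to the elastic term (in $\skalar{B\cdot,\cdot}$), and sum over $n=2,\dots,m$; the $G_n$-terms telescope and the tested \eqref{1} is added in. On the left this leaves $G^{(w)}_m+G^{(u)}_m+\tfrac14\sum_{n=2}^m|\wntau-2\wntauminus+\wntauminuss|^2+\tfrac14\sum_{n=2}^m\|\untau-2\untauminus+\untauminuss\|_B^2+\sum_{n=1}^m\tau\skalar{A\wntau,\wntau}$, plus boundary terms at $n=1$ controlled by the previous step. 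On the right, the source part is handled through $\tau\|\fntau\|_{V^*}^2\le\tfrac92\|f\|^2_{L^2(t_{n-1},t_n;V^*)}+\tfrac12\|f\|^2_{L^2(t_{n-2},t_{n-1};V^*)}$ (so that $\tau\sum_n\|\fntau\|_{V^*}^2\le C$), and the multivalued part by $-\skalar{\iota^*\xintau,\wntau}\le(\tfrac12+d)\|\iota\wntau\|_U^2+\tfrac12 d^2$ together with Corollary~\ref{Cor_3.2}; choosing $\varepsilon$ small relative to $\alpha$ from $H(A)(iii)$ lets the $\|\wntau\|^2$ contributions on the right be absorbed into $\alpha\,\tau\sum\|\wntau\|^2$. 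What is left is an inequality of the form
\[\tfrac14\big(|w_\tau^m|^2+|2w_\tau^m-w_\tau^{m-1}|^2\big)+\tfrac14\sum_{n=2}^m\big|\wntau-2\wntauminus+\wntauminuss\big|^2+\tfrac{\alpha}{2}\,\tau\sum_{n=1}^m\|\wntau\|^2\le C+C\,\tau\sum_{n=1}^m|\wntau|^2,\]
and since $|\wntau|^2\le|\wntau|^2+|2\wntau-\wntauminus|^2=4G^{(w)}_n$, the discrete Gronwall lemma (for $\tau$ small) gives $\max_m(|w_\tau^m|^2+|2w_\tau^m-w_\tau^{m-1}|^2)\le C$, i.e.\ \eqref{a2}; substituting back yields \eqref{a1} (the $n=0$ term being absorbed by \eqref{-1}) and \eqref{a6}.

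The remaining bounds are quick. Estimate \eqref{a3} follows from $H(j)(ii)$, continuity of $\iota$ and \eqref{a1}. For \eqref{a7}, set $\delta_n:=\untau-\untauminus$; then \eqref{1b} gives $\delta_1=\tau\wone$ and \eqref{2b} gives $\tfrac32\delta_n-\tfrac12\delta_{n-1}=\tau\wntau$ for $n\ge2$, so Lemma~\ref{BWK} with \eqref{a1} gives $\sum_{n=1}^N\|\delta_n\|^2\le C\tau^2\sum_{n=1}^N\|\wntau\|^2\le C\tau$, whence $\|\untau\|\le\|\uzero\|+\sum_{j=1}^n\|\delta_j\|\le\|\uzero\|+\big(N\sum_j\|\delta_j\|^2\big)^{1/2}\le C$. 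Finally \eqref{a4} and \eqref{a5} are obtained by solving \eqref{1}, resp.\ \eqref{2}, for the discrete time-derivative term and estimating $\fntau,\,A\wntau,\,B\untau,\,\iota^*\xintau$ in $V^*$ via $H(f)$, $H(A)(ii)$, $H(B)$, $H(j)(ii)$--$H(\iota)$ and the already established \eqref{a1}, \eqref{a3}, \eqref{a7}. The main obstacle is the global estimate: recognizing and exploiting the G-stability identity so the double step quotient produces genuinely nonnegative quadratic terms, matching the boundary terms generated by the implicit-Euler first level against the $n=1$ bound, checking that the nonmonotone subdifferential term is absorbable because $\alpha>0$, and closing with discrete Gronwall; everything else is routine.
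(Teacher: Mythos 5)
Your proposal is correct and, for the core estimates \eqref{a1}--\eqref{a6}, it follows essentially the same route as the paper: test \eqref{1} and \eqref{2} with the discrete velocity, use exactly the identity you call G-stability (it appears in the paper as the algebraic expansion \eqref{ap8}, applied both to $\wntau$ in $H$ and to $\untau$ with the form $\skalar{B\cdot,\cdot}$), telescope, control the $n=1$ boundary terms $|2\wone-\wzero|$ and $\skalar{B(2\uone-\uzero),2\uone-\uzero}$ from the first-level inequality, absorb the subdifferential and source terms via Corollary~\ref{Cor_3.2} and Young's inequality with $\varepsilon$ small relative to $\alpha$, and close with discrete Gronwall; the treatment of \eqref{a3}, \eqref{a4}, \eqref{a5} is also identical. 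The one place you genuinely diverge is \eqref{a7}: the paper unrolls the recursion \eqref{2b} into a geometric series in powers of $1/3$ (formulas \eqref{ap21}--\eqref{ap23}), whereas you set $\delta_n=\untau-\untauminus$, observe $\tfrac32\delta_n-\tfrac12\delta_{n-1}=\tau\wntau$, and invoke Lemma~\ref{BWK} plus Cauchy--Schwarz. Both are valid; your variant is shorter and reuses a lemma the paper only deploys later (in Lemma~\ref{Z2}), while the paper's series expansion gives the slightly more explicit pointwise representation \eqref{ap21} of $\untau$ in terms of partial sums, which it does not, however, exploit further.
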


\noindent \begin{proof}
We test equation (\ref{1}) with $w_\tau^1$ and obtain
\begin{align}\label{poprawki_1}
(\wone-\wzero,\wone) + \tau\langle A\wone,\wone\rangle + \tau\langle B\uone,\wone\rangle + \tau\langle \iota^*\xione,\wone\rangle  =\tau\langle \fone,\wone\rangle. 
\end{align}

\noindent We now deal with each term of (\ref{poprawki_1}). By elementary properties of inner product we get
\begin{align}
	\label{ap1}(\wone -\wzero,\wone) = \frac12 |\wone|^2 - \frac12|\wzero|^2 +\frac12|\wone -\wzero|^2.
\end{align}
Next, from $H(A)$(iii) and $H(B)$, we obtain 
\begin{align}
&	\label{ap2}\tau\langle A\wone,\wone\rangle \ge \tau \alpha \|\wone\|^2-\tau \beta |\wone|^2, \\[2mm]
& \label{ap3}\tau \langle B\uone, \wone\rangle 
	=\langle B\uone,\uone - \uzero\rangle  = \frac{1}{2}\langle B\uone,\uone \rangle  - \frac12 \langle B\uzero,\uzero\rangle + \frac12 \langle B(\uone-\uzero),\uone-\uzero\rangle.
\end{align}
From $H(j)$(ii), similarly to (\ref{poprawki_15}), we get 
\begin{align}
&	\label{ap4} \tau \langle \iota^*\xione,\wone\rangle \ge -\tau\left(\frac12 +d\right) \varepsilon \|\wone\|^2 - \tau\left(\frac12 +d\right) \bar{c}_\iota(\varepsilon) |\wone|^2 -\frac12 \tau  d^2.
\end{align}
Finally,  from $H(f)$, we have
\begin{align}
&	\label{ap5}\tau \langle \fone, \wone \rangle \le \tau \|\fone\|_{V^*} \|\wone\| \le \varepsilon \tau \|\wone\|^2 + C(\varepsilon)\tau \|\fone\|_{V^*}^2.
\end{align}
We multiply \eqref{poprawki_1} by $\frac12$ and apply \eqref{ap1}--\eqref{ap5} to the resulting equality. Then, we obtain
\begin{align}
&	\nonumber \frac14 |\wone|^2 + \frac14 |\wone-\wzero|^2 	+ \frac12\tau\left(\alpha - \left(\frac32 +d\right)\varepsilon\right) \|\wone\|^2 + \frac14 \langle B\uone,\uone\rangle   \\[2mm]
&\nonumber + \frac14 \langle B(\uone-\uzero),\uone-\uzero\rangle \le \frac14 |\wzero|^2 + \frac14 \langle B\uzero,\uzero\rangle\\[2mm]
&	 +\frac12\tau \left(\beta +\left(\frac12 +d\right)\bar{c}_{\iota}(\varepsilon) \right) |\wone|^2+ \frac14 \tau d^2 + \frac12C(\varepsilon) \tau \|\fone\|_{V^*}^2\label{ap6}.
\end{align}
We test equation (\ref{2}) with $w_\tau^n$ and obtain
\begin{align}\label{poprawki_1a}
	\left(\frac32 \wntau-2\wntauminus + \frac12 \wntauminuss,\wntau\right) + \tau\langle A\wntau,\wntau\rangle + \tau\langle B\untau,\wntau\rangle + \tau\langle \iota^*\xintau,\wntau\rangle  =\tau\langle \fntau,\wntau\rangle. 
\end{align}
\noindent By elementary properties of inner product we get
\begin{align}
&\nonumber \left( \frac32 \wntau - 2\wntauminus + \frac12 \wntauminuss, \wntau\right)  \\[2mm]
&\label{ap8}=\frac14 \left(|\wntau|^2 + |2\wntau-\wntauminus|^2 -|\wntauminus|^2 - |2\wntauminus-\wntauminuss|^2 + |\wntau-2\wntauminus+\wntauminuss|^2\right). 
\end{align}

\noindent From $H(A)$(iii) and $H(B)$ we get
\begin{align}
&\label{ap9}\tau\langle A\wntau,\wntau\rangle \ge \tau \alpha \|\wntau\|^2 - \tau\beta|\wntau|^2, \\[2mm]
	&	\nonumber \tau \langle B\untau,\wntau\rangle = \langle B\untau,\frac32\untau-2\untauminus + \frac12 \untauminuss \rangle = \frac14 \langle B\untau,\untau\rangle  \\[2mm]
	&	 + \frac14\langle B(2\untau-\untauminus),2\untau-\untauminus\rangle - \frac14\langle B\untauminus,\untauminus\rangle-\frac14\langle B(2\untauminus-\untauminuss),2\untauminus -\untauminuss \rangle \nonumber  \\[2mm]
	&	\label{ap7} + \frac14\langle B(\untau-2\untauminus+\untauminuss), \untau-2\untauminus+\untauminuss\rangle. 
\end{align}
From  $H(j)$(ii) and $H(f)$
\begin{align}
&\label{ap10}\tau \langle \iota^*\xintau,\wntau\rangle \ge -\tau \left(\frac12+d\right)\varepsilon \|\wntau\|^2 - \tau\left(\frac12+d\right)\bar{c}_{\iota}(\varepsilon)|\wntau|^2 - \tau\frac12d^2, \\[2mm]
&\label{ap11}\tau \langle \fntau,\wntau\rangle \le \varepsilon \tau \|\wntau\|^2 + C(\varepsilon)\tau \|\fntau\|_{V^*}^2.	
\end{align}

\noindent Applying \eqref{ap8}--\eqref{ap11} to \eqref{poprawki_1a}, we obtain
\begin{align}
&	\nonumber \frac14 |\wntau|^2 + \frac14 |2\wntau-\wntauminus|^2 + \frac14 |\wntau - 2\wntauminus + \wntauminuss|^2+\frac14 \langle B\untau,\untau\rangle   \\[2mm]
&	 \nonumber + \frac14 \langle B(2\untau-\untauminus),2\untau-\untauminus\rangle  + \frac14 \langle B(\untau-2\untauminus+\untauminuss),\untau-2\untauminus+\untauminuss\rangle \\[2mm]
&	\nonumber + \tau \left(\alpha - \left(\frac32 +d\right)\varepsilon \right)\|\wntau\|^2\le \frac14 |\wntauminus|^2 + \frac14 |2\wntauminus-\wntauminuss|^2 \\[2mm]
&	 \nonumber  + \frac14 \langle B\untauminus,\untauminus\rangle + \frac14 \langle B(2\untauminus-\untauminuss),2\untauminus-\untauminuss\rangle  \\[2mm]
&	+\tau \left(\beta +\left(\frac12 +d\right)\bar{c}_{\iota}(\varepsilon) \right) |\wntau|^2 +\frac12 \tau  d^2 + C(\varepsilon)\tau \|\fntau\|_{V^*}^2 \label{ap12}
\end{align}

\noindent We replace the index $n$ by $k$ in \eqref{ap12} and sum it up for $k=2,\dots,N$. Moreover, we add (\ref{ap6}) to the resulting inequality, which leads to the following conclusion
\begin{align}
&	\nonumber  \frac14 |\wntau|^2 + \frac14 |2\wntau-\wntauminus|^2 +\frac14 \sum_{k=2}^n |\wktau-2\wktauminus+\wktauminuss|^2  \\[2mm]
&	\nonumber  +\frac14 \langle B\untau,\untau\rangle + \frac14 \langle B(2\untau-\untauminus),2\untau-\untauminus\rangle  \\[2mm]
&	\nonumber  +\frac14 \sum_{k=2}^n \langle B(\uktau-2\uktauminus+\uktauminuss),\uktau-2\uktauminus+\uktauminuss\rangle   \\[2mm]
&	\nonumber + 
\frac12 \tau \left(\alpha - \left(\frac32+d\right)\varepsilon\right)\sum_{k=1}^n \|\wktau\|^2 +\frac14 |\wone-\wzero|^2 + \frac14 \langle B(\uone-\uzero),\uone-\uzero\rangle  \\[2mm]
&	\nonumber \le \frac14 |\wzero|^2 + \frac14 \langle B\uzero,\uzero\rangle  +\frac14 |2\wone-\wzero|^2+ \frac14 \langle B(2\uone-\uzero), 2\uone-\uzero\rangle \\[2mm]
&	  + \frac12d^2 n \tau + \tau C(\varepsilon) \sum_{k=1}^n \|\fktau\|_{V^*}^2+ \tau \sum_{k=1}^n \left(\beta + \left(\frac12 +d\right)\bar{c}_\iota(\varepsilon)\right) |\wktau|^2 .\label{ap14}
\end{align}

\noindent Lets take $\varepsilon$ small enough, such that $ (\alpha - (\frac32 +d)\varepsilon) >0$. Then, from \eqref{ap14}, it follows that
\begin{align}
&	\nonumber  \frac14 |\wntau|^2 \le \frac14 |\wzero|^2 + \frac14 \langle B\uzero, \uzero\rangle  +\frac14 |2\wone-\wzero|^2 + \frac14 \langle B(2\uone-\uzero),(2\uone-\uzero)\rangle  \\[2mm]
&	\nonumber  + \frac12 d^2n\tau + \tau C(\varepsilon) \sum_{k=1}^n \|\fktau\|_{V^*}^2 + \tau \sum_{k=1}^n \left(\beta + \left(\frac12 +d\right)\bar{c}_\iota(\varepsilon)\right) |\wktau|^2 \\[2mm]
	&\label{ap15}=: C_n + \tau \sum_{k=1}^n \left(\beta + \left(\frac12 +d\right)\bar{c}_\iota(\varepsilon)\right) |\wktau|^2.
\end{align}
\noindent
Usig this notation, we obtain
\begin{equation}
	\label{ap16}|\wntau|^2 \le 4C_n + \tau \sum_{k=1}^n C |\wktau|^2
\end{equation}
and, by discrete Gronwall lemma, it follows from \eqref{ap16} that
\begin{equation}\label{ap16e}
	\max_{1\le n\le N} |\wntau|^2 \le C \max_{1\le n \le N} 4 C_n.
\end{equation}
We now need to show that $C_n$ is bounded independently of $\tau$. Indeed, $|\wzero|$ is bounded by (\ref{wzero}). The term $\langle B\uzero, \uzero\rangle$ is bounded by $H(B)$ and (\ref{uzero}). Clearly, $n\tau\leq T$ and
\begin{align}
	& \tau \sum_{k=1}^n \|\fktau\|_{V^*}^2\leq C\|f\|^2_{\cV^*}\label{ap16a}.	
\end{align}
\noindent Moreover, we have
\begin{align}
&	\frac14|2\wone-\wzero|^2 = \frac14|\wone+\wone-\wzero|^2 \le \frac12(|\wone|^2 + |\wone-\wzero|^2)\le \frac12|\wone|^2+|\wone-\wzero|^2. \label{ap17}
\end{align}
On the other hand, from (\ref{ap6}), we have for $\tau$ small enough 
\begin{align*}
	|\wone|^2+|\wone-\wzero|^2\leq C+2\tau \left(\beta +\left(\frac12 +d\right)\bar{c}_{\iota}(\varepsilon) \right) |\wone|^2\leq C+\frac12|\wone|^2.
\end{align*} 
Hence, 
\begin{align}\label{ap13}
\frac12|\wone|^2+|\wone-\wzero|^2\leq C.	
\end{align}
Combining it with (\ref{ap17}), we conclude that $\frac14|2\wone-\wzero|^2$ is bounded. It remains to deal with the term $\langle B(2\uone-\uzero),2\uone-\uzero\rangle$. Taking $\varepsilon$ small enough in (\ref{ap6}), we get
\begin{align}\label{ap16c}
	\tau\|\wone\|^2\leq C.
\end{align}  
It follows from (\ref{uzero}) and (\ref{ap16c}) that
\begin{align}
	\|\uone\|^2=\|\uzero+\tau\wone\|^2 \le 2(\|\uzero\|^2+\tau^2\|\wone\|^2 )\leq C.\label{ap19}
\end{align}
Now, using (\ref{ap16c}) and (\ref{ap19}), we estimate
\begin{align*}
&	\nonumber  \langle B(2\uone-\uzero),2\uone-\uzero\rangle \le \|B\|||2\uone-\uzero\|^2=\|B\|||\uone+\uone-\uzero\|^2  \\[2mm]
&	\le 2\|B\|(\|\uone\|^2 + \|\uone-\uzero\|^2)= 2\|B\|(\|\uone\|^2 + \tau^2 \|\wone\|^2)\leq C. 
\end{align*}

\noindent  Concluding, the right hand side of (\ref{ap16e}) is bounded, which, together with (\ref{wzero}), completes the proof of  (\ref{a2}).
 Furthermore, due to (\ref{a2}), we estimate
\begin{align}
& \tau \sum_{k=1}^n \left(\beta + \left(\frac12 +d\right)\bar{c}_\iota(\varepsilon)\right) |\wktau|^2\leq \left(\beta + \left(\frac12 +d\right)\bar{c}_\iota(\varepsilon)\right) \tau N \cdot \max_{n=1,...,N}|\wntau|\leq TC.\label{ap20}
\end{align}
We conclude that the right hand side of (\ref{ap14}) is bounded, which, together with (\ref{-1}), allows us to derive (\ref{a1}) and (\ref{a6}). 
By $H(j)$(ii) we imediately get (\ref{a3}) from \eqref{a1}.\\

We now pass to the proof of \eqref{a7}. First, observe that from telescopic principle,
\begin{align}\label{telescop}
\frac12 \uzero - \frac32 \uone - \frac12 \untauminus + \frac32 \untau = \tau \sum_{k=2}^n \wktau
\end{align}
for $n=2,\dots,N$. 
Using notation $S_n = \frac23 \tau \sum_{k=2}^n \wktau$, we have
\begin{align}
	\nonumber \untau = &S_n + \frac13 S_{n-1} + \left(\frac13\right)^2 S_{n-2} + \ldots + \left(\frac13\right)^{n-2}S_2 \\[2mm]
	& + \frac12 \bigg(3-\left(\frac13\right)^{n-1}\bigg) \uone - \frac12 \bigg(1-\left(\frac13\right)^{n-1}\bigg)\uzero.\label{ap21}
\end{align} 
We denote $\bar{S}_n = \frac23 \tau \sum_{k=2}^n \|\wktau\|$. Since $\bar{S}_j \le \bar{S}_n$ for $j\le n$, we have from \eqref{ap21}
\begin{align}
&	\nonumber\|\untau\| \le \bar{S}_n \bigg(1+\frac13 + \ldots, + \left(\frac13\right)^{n-2}\bigg) + \frac12\bigg(3-\left(\frac13\right)^{n-1}\bigg) \|\uone\| + \frac12 \bigg(1-\left(\frac13\right)^{n-1}\bigg) \|\uzero\|  \\[2mm]
	&=\frac{1-\left(\frac13\right)^{n-1}}{1-\frac13} \bar{S}_n + \frac12\bigg(3-\left(\frac13\right)^{n-1}\bigg) \|\uone\| + \frac12 \bigg(1-\left(\frac13\right)^{n-1}\bigg) \|\uzero\|. \label{ap22}
\end{align} 
\noindent Thus, \eqref{ap22} implies
\begin{equation*}
	\|\untau\| \le C\left(\tau \sum_{k=2}^n \|\wktau\|+ \|\uone\|+ \|\uzero\|\right), 
\end{equation*}
which in turn implies
\begin{align}
	\nonumber \|\untau\|^2 &\le 3C\left(\tau^2 \left(\sum_{k=2}^n \|\wktau\|   \right)^2  +\|\uone\|^2+ \|\uzero\|^2 \right)  \\[2mm]
	&\le3C\left(\tau^2 n \sum_{k=2}^n \|\wktau\|^2 + \|\uone\|^2 + \|\uzero\|^2\right) \le 
	3C\left(T \tau \sum_{k=2}^N \|\wktau\|^2 +  \|\uone\|^2 + \|\uzero\|^2\right). \label{ap23}
\end{align}

\noindent We now conclude \eqref{a7} from \eqref{ap23}, (\ref{a1}), (\ref{ap19}) and (\ref{uzero}).

From (\ref{1}), $H(A)$(ii), $H(B)$ and $H(j)$(ii) we get 
\begin{align}
	\nonumber \left\|\frac{1}{\tau}(\wone - \wzero)\right\|_{V^*} &\le \|\fone\|_{V^*} + \|A\wone\|_{V^*} + \|B\uone\|_{V^*} + \|\iota^*\xione\|_{V^*}\le \|\fone\|_{V^*} + a+b\|\wone\| \\[2mm]
	& + \|B\|\|\uone\| + \|\iota^*\|d(1+\|\iota\|\|\wone\|) \leq C(1+\|\wone\| + \|\uone\| + \|\fone\|_{V^*}). \label{z1}
\end{align}
Squaring both sides of \eqref{z1} and multiplying  by $\tau$, we obtain
\begin{equation*}
	\tau \left\|\frac{1}{\tau}(\wone-\wzero)\right\|_{V^*}^2 \le C(\tau + \tau \|\wone\|^2 + \tau \|\uone\|^2 + \tau \|\fone\|_{V^*}^2).
\end{equation*}
From (\ref{ap16c}), (\ref{ap19}) and (\ref{ap16a}), the right hand side is bounded, which proves \eqref{a4}. Similarly, \eqref{a5} follows from (\ref{2}), \eqref{a1}, \eqref{a7} and (\ref{ap16a}). 
\end{proof}

\begin{lemma}\label{Lemma_w0_w1}The following convergence holds
\[
\wone - \wzero \to 0\quad \textrm{strongly in} \,\, H.
\]
\end{lemma}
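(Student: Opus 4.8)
The plan is to bootstrap the weak information already carried by $\wone$ into strong $H$-convergence by means of the energy inequality (\ref{ap6}). First I would collect the elementary consequences of Lemma \ref{Lemma_apriori} and of the assumptions on the discrete initial data. From (\ref{a4}) one has $\|\wone - \wzero\|_{V^*}^2 \le C\tau$, hence $\wone - \wzero \to 0$ strongly in $V^*$; together with (\ref{wzero}) this gives $\wone \to w_0$ strongly in $V^*$. Since $H \subset V^*$ continuously and $\{\wone\}$ is bounded in $H$ by (\ref{a2}), every subsequence of $\{\wone\}$ admits a further subsequence converging weakly in $H$, necessarily to $w_0$; thus $\wone \rightharpoonup w_0$ weakly in $H$ and, by weak lower semicontinuity of the norm, $|w_0| \le \liminf_{\tau\to 0}|\wone|$. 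Moreover, (\ref{ap16c}) yields $\|\tau\wone\| = \sqrt{\tau}\,(\tau\|\wone\|^2)^{1/2} \le C\sqrt{\tau} \to 0$, so by (\ref{1b}) and (\ref{uzero}) we get $\uone = \uzero + \tau\wone \to u_0$ strongly in $V$; combined with (\ref{uzero}) and $H(B)$ this gives $\langle B\uone,\uone\rangle \to \langle Bu_0,u_0\rangle$ and $\langle B\uzero,\uzero\rangle \to \langle Bu_0,u_0\rangle$.

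Next I would fix $\varepsilon>0$ small enough that $\alpha - (\frac32 + d)\varepsilon > 0$, multiply (\ref{ap6}) by $4$, and discard from its left-hand side the two nonnegative terms $2\tau(\alpha - (\frac32+d)\varepsilon)\|\wone\|^2$ and $\langle B(\uone-\uzero),\uone-\uzero\rangle$ (the latter nonnegative since $B$ is positive), moving $|\wone|^2$ and $\langle B\uone,\uone\rangle$ to the right. This leaves
\[
|\wone - \wzero|^2 \le |\wzero|^2 - |\wone|^2 + \langle B\uzero,\uzero\rangle - \langle B\uone,\uone\rangle + R_\tau,
\]
where $R_\tau := 2\tau\big(\beta + (\tfrac12+d)\bar{c}_\iota(\varepsilon)\big)|\wone|^2 + \tau d^2 + 2C(\varepsilon)\tau\|\fone\|_{V^*}^2$. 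By (\ref{a2}) the first summand of $R_\tau$ is $O(\tau)$, and since $\fone = \frac1\tau\int_0^\tau f(s)\,ds$ one has $\tau\|\fone\|_{V^*}^2 \le \int_0^\tau\|f(s)\|_{V^*}^2\,ds \to 0$ by absolute continuity of the Lebesgue integral; hence $R_\tau \to 0$.

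Finally, passing to the $\limsup$ as $\tau\to 0$ in the displayed inequality and using $|\wzero| \to |w_0|$ from (\ref{wzero}), the convergences of the two $B$-terms to $\langle Bu_0,u_0\rangle$ noted above, and $\liminf_{\tau\to 0}|\wone| \ge |w_0|$, the right-hand side has $\limsup$ bounded by $|w_0|^2 - |w_0|^2 = 0$. Therefore $|\wone - \wzero| \to 0$, which is the assertion. The only delicate point is the use of weak lower semicontinuity of the $H$-norm to absorb the $-|\wone|^2$ term; everything else is bookkeeping with the a priori bounds of Lemma \ref{Lemma_apriori} and the convergences (\ref{uzero})--(\ref{-1}) of the discrete initial data.
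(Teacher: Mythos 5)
Your proof is correct, and every fact you invoke ((\ref{a2}), (\ref{a4}), (\ref{ap6}), (\ref{ap16c}), (\ref{uzero})--(\ref{wzero})) is available at this point in the paper. The route differs from the paper's in how the key inequality is closed, even though both ultimately rest on the same computation (testing (\ref{1}) with $\wone$). The paper only establishes $\wone-\wzero\rightharpoonup 0$ weakly in $H$, then splits $|\wone-\wzero|^2=(\wone-\wzero,\wone)+(\wzero-\wone,\wzero)$: the second term vanishes by the weak--strong pairing with (\ref{wzero}), and the first is rewritten via (\ref{1}) as $\tau\left(\langle\fone,\wone\rangle-\langle A\wone,\wone\rangle-\langle B\uone,\wone\rangle-\langle\iota^*\xione,\wone\rangle\right)$, whose $\limsup$ is shown to be $\le 0$ term by term, with the coercivity of $A$ absorbing the $\partial j$ contribution and $\tau\langle B\uone,\wone\rangle\to 0$ by Cauchy--Schwarz. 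You instead reuse the already-derived energy inequality (\ref{ap6}), so you never re-estimate the $A$, $B$ and $\partial j$ terms; the price is two additional identifications that the paper does not need: you must show $\wone\rightharpoonup w_0$ weakly in $H$ (not merely that the difference tends weakly to zero) in order to apply weak lower semicontinuity of the norm to $-|\wone|^2$, and you must show $\uone\to u_0$ strongly in $V$ so that $\langle B\uone,\uone\rangle$ and $\langle B\uzero,\uzero\rangle$ cancel in the limit. Both are verified correctly ($\|\tau\wone\|\le C\sqrt{\tau}$ from (\ref{ap16c}), and the subsequence argument through the strong $V^*$-convergence), so the argument closes. Yours is the standard ``energy method'' upgrade from weak to strong convergence; the paper's variant avoids lower semicontinuity and the convergence of $\uone$ altogether.
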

\begin{proof}
From \eqref{ap13} we have $\frac14 |\wone-\wzero|^2 \le C$, so the sequence $\{\wone-\wzero\}$ is bounded in $H$, hence, for a subsequence $\wone - \wzero \to \zeta$ weakly in $H$ and, in consequence, weakly in $V^*$ for some $\zeta \in H$. From \eqref{a4}, it follows that
\begin{align*}
\|\wone - \wzero\|_{V^*}^2 \le\tau C \to 0.
\end{align*}
Thus, $\wone-\wzero \to 0$ strongly in $V^*$, so, also weakly in $V^*$. From the uniqueness of the weak limit we get $\zeta=0$, so 
\begin{equation}\label{poprawki_19}
\wone-\wzero \to 0 \quad \textrm{weakly in } H.
\end{equation}
We now pass to the strong convergence. Clearly, $0\leq |\wone- \wzero|^2$, hence,
\begin{align}\label{poprawki_5}
	0\leq\liminf |\wone- \wzero|^2\leq \limsup |\wone- \wzero|^2.
\end{align}  
Next, we write
\begin{align}\label{poprawki_2}
|\wone- \wzero|^2 = ( \wone-\wzero,\wone-\wzero)= ( \wone-\wzero,\wone) + ( \wzero-\wone,\wzero)	
\end{align}
and calculate
\begin{align}\label{poprawki_6}
\limsup |\wone- \wzero|^2\leq \limsup ( \wone-\wzero,\wone) + \limsup( \wzero-\wone,\wzero).
\end{align}
From (\ref{poprawki_19}) and (\ref{wzero}) we have
\begin{align}\label{poprawki_3}
	\limsup (\wzero-\wone,\wzero)=\lim(\wzero-\wone,\wzero) = 0.
\end{align}
Next, we will show that 
\begin{align}\label{poprawki_4}
\limsup(\wone-\wzero,\wone) \leq 0.	
\end{align}
To this end, we apply \eqref{1} and obtain 
\begin{equation}
( \wone-\wzero,\wone)  = \tau \left(\langle \fone,\wone\rangle - \langle A\wone,\wone\rangle - \langle B\uone,\wone\rangle - \langle \iota^*\xione,\wone\rangle \right). \label{z2}
\end{equation}
Hence,
\begin{align}\label{poprawki_7}
\limsup(\wone-\wzero,\wone)\leq &\limsup \tau \langle \fone,\wone\rangle -\liminf \left(\tau \langle A\wone,\wone\rangle+\tau \langle \iota^*\xione,\wone\rangle\right)\nonumber\\[2mm]
&-\liminf (\tau \langle B\uone,\wone\rangle).
\end{align}
We estimate all three terms in the right hand side of \eqref{poprawki_7}. First,
\begin{align}
&\tau |\langle \fone,\wone\rangle| \le \tau \|\fone\|_{V^*} \|\wone\| = (\tau \|\fone\|^2_{V^*})^{1/2} (\tau \|\wone \|^2)^{1/2}. \label{z3}
\end{align}
From Jensen inequality and $H(f)$ we have 
\begin{align}\label{poprawki_20}
\tau \|\fone\|^2_{V^*} \le \int_0^\tau \|f(t)\|^2_{V^*}\, dt\to 0.	
\end{align}
From (\ref{ap16c}), (\ref{poprawki_20})  and (\ref{z3}) we obtain $\lim\tau \langle \fone,\wone\rangle=0$.\\ 
\noindent Next, combining $H(A)$(iii) with (\ref{ap4}) and taking $\varepsilon>0$ small enough, we get 
\begin{align}
&\tau \langle A\wone,\wone\rangle +\tau \langle \iota^*\xione,\wone\rangle \ge \tau\left(\alpha-\left(\frac12 +d\right) \varepsilon\right) \|\wone\|^2- \tau\left(\beta+\left(\frac12 +d\right) \bar{c}_\iota(\varepsilon)\right) |\wone|^2 \nonumber\\[2mm]
&  -\frac12 \tau  d^2\geq-\tau\left(\beta+\left(\frac12 +d\right) \bar{c}_\iota(\varepsilon)\right) |\wone|^2 -\frac12 \tau  d^2=:-\tau C |\wone|^2-\frac12 \tau  d^2 \label{z4}.
\end{align}
From (\ref{ap13}) we get  $\tau C  |\wone|^2  \to 0$ and, clearly, $\tau \frac12 d^2\to 0$. Hence, from (\ref{z4}), we have 
\begin{align*}
\liminf (\tau \langle A\wone,\wone\rangle +\tau \langle \iota^*\xione,\wone\rangle)\geq \liminf\left(-\tau C |\wone|^2- \frac12 \tau  d^2\right)=0. 	
\end{align*}
Lastly,
\begin{align}
&|\tau\langle B\uone,\wone\rangle |  \leq \sqrt{\tau} \|B\| \|\uone\|\sqrt{\tau}\|\wone\|.  \label{z5}
\end{align}
Applying (\ref{ap16c}) and (\ref{ap19}) in (\ref{z5}), we get $\lim \tau\langle B\uone,\wone\rangle=0$. Now, (\ref{poprawki_4}) follows from (\ref{poprawki_7}). 
From (\ref{poprawki_6})-(\ref{poprawki_4}) we obtain $\limsup|\wone-\wzero|^2\leq 0$. 
Combining it with (\ref{poprawki_5}), we get $\lim |\wone- \wzero|^2=0$, which completes the proof. 
\end{proof}\\

Let $I_n=((n-1)\tau, n\tau]$, $n=1,\ldots, N$. We define the functions $\bar{u}_\tau,\,\bar{w}_\tau\colon (0,T]\to V$, $\bar{\xi}_\tau\colon(0,T]\to U^*$, $\bar{f}_\tau\colon(0,T]\to V^*$, $u_\tau,\, w_\tau\colon [0,T]\to V$   by $\bar{u}_\tau(t) = \untau,\, \bar{w}_\tau (t)= \wntau, \,\bar{\xi}_\tau(t) = \xintau, \,\bar{f}_\tau(t) = \fntau$ for $t\in I_n, \ n=1,\ldots, N$, 
\begin{align*}
u_\tau(t) = \uzero + \int_0^t \bar{w}_\tau(s)\, ds	\quad \text{for}\,\,t\in[0,T]
\end{align*}
and
\begin{equation}\label{w_tau}
\displaystyle
w_\tau(t) = \begin{cases}
 \frac32 \wone - \frac12 \wzero + (\wone-\wzero) \frac{t-t_1}{\tau}  &\textrm{for} \quad t\in [0,\tau], \\[2mm]
\frac32\wntau -\frac12\wntauminus + (\frac32\wntau-2\wntauminus + \frac12\wntauminuss)\frac{t-t_n}{\tau} \quad &\textrm{for} \quad t\in I_n, \ n=2,\ldots,N.
\end{cases}
\end{equation}
We note that 
\begin{align}\label{poprawki_21}
w_\tau(0) = \frac32\wone-\frac12\wzero -(\wone-\wzero) =\frac12 \wone +\frac12 \wzero 
\end{align}
and $w_\tau(t_1)$ is well defined, since
\begin{align*}
&w_\tau(t_{1-}) = \frac32 \wone - \frac12 \wzero, \\[2mm]
&w_\tau(t_{1+}) = \frac32 w_\tau^2-\frac12 \wone - \left(\frac32 w_\tau^2-2\wone+\frac12\wzero\right) = \frac32\wone - \frac12 \wzero.
\end{align*}
Moreover, it is clear that
\begin{align}\label{poprawki_23}
u_\tau'=\bar{w}_\tau
\end{align}
and 
\begin{equation}
	\label{c13}\bar{f}_\tau \to f \quad \mbox{strongly in} \ \cV^*. 
\end{equation}
The next lemma provides the appropriate bounds for the above functions.   
\begin{lemma}\label{apriori2}
	The following bounds hold
\begin{align}
&\|\bar{u}_\tau\|_{L^\infty(0,T;V)} \le C, \label{e9}\\[2mm] 	
&\|u_\tau\|_{L^\infty(0,T;V)} \le C, \label{e10}\\[2mm] 	
&\|\bar{w}_\tau\|_{\cV}\le C, \label{e2}\\[2mm]
&\|\bar{w}_\tau\|_{L^\infty(0,T;H)} \le C, \label{e3}\\[2mm]
&\|w_\tau\|_{\cV} \le C \label{e1},\\[2mm]
&\|w_\tau\|_{L^\infty(0,T;H)} \le C, \label{e7}\\[2mm] 
&\|w_\tau'\|_{\cV^*} \le C,\label{e4}\\[2mm]
%&\|\bar{w}_\tau\|_{BV(0,T;V^*)} \le C, \label{e5}\\[2mm]
&\|\bar{w}_\tau\|_{M^{2,2}(0,T;V,V^*)} \le C, \label{e6}\\[2mm]
&\|\bar{\xi}_\tau\|_{\cU^*} \le C.\label{e8}
\end{align}
\end{lemma}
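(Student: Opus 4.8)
The plan is to derive each bound from the a-priori estimates already established in Lemma~\ref{Lemma_apriori}, simply re-expressing the discrete quantities there in terms of the interpolant functions $\bar u_\tau$, $\bar w_\tau$, $w_\tau$, $\bar\xi_\tau$. First, \eqref{e9} is immediate: since $\bar u_\tau$ takes the value $\untau$ on $I_n$, its $L^\infty(0,T;V)$ norm equals $\max_{n=1,\dots,N}\|\untau\|$, which is bounded by \eqref{a7}. Then \eqref{e10} follows from \eqref{poprawki_23}, $u_\tau(t)=\uzero+\int_0^t\bar w_\tau(s)\,ds$, the bound \eqref{uzero} on $\uzero$ and the $\cV$-bound on $\bar w_\tau$ via Cauchy--Schwarz; alternatively one notes $u_\tau(t_n)=\untau$ and $u_\tau$ is piecewise linear, so its $L^\infty(0,T;V)$ norm is controlled by $\max_n\|\untau\|$ together with $\|\uzero\|$. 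For \eqref{e2} observe $\|\bar w_\tau\|_{\cV}^2=\tau\sum_{n=1}^N\|\wntau\|^2$, which is \eqref{a1}; for \eqref{e3}, $\|\bar w_\tau\|_{L^\infty(0,T;H)}=\max_{n}|\wntau|$, which is \eqref{a2}; and \eqref{e8} reads $\|\bar\xi_\tau\|_{\cU^*}^2=\tau\sum_{n=1}^N\|\xintau\|_{U^*}^2$, which is \eqref{a3}.

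The bounds \eqref{e1} and \eqref{e7} for the piecewise-linear interpolant $w_\tau$ require a short extra argument. On each $I_n$ (and on $[0,\tau]$), $w_\tau(t)$ is an affine combination of the nodal values; for $n\ge 2$ one has $w_\tau(t)=\tfrac32\wntau-\tfrac12\wntauminus+\bigl(\tfrac32\wntau-2\wntauminus+\tfrac12\wntauminuss\bigr)\tfrac{t-t_n}{\tau}$ with $\tfrac{t-t_n}{\tau}\in[-1,0]$, so $w_\tau(t)$ lies on the segment joining $w_\tau(t_{n-1+})=\tfrac32\wntauminus-\tfrac12\wntauminuss$ and $w_\tau(t_{n-})=\tfrac32\wntau-\tfrac12\wntauminus$; hence $\|w_\tau(t)\|_X\le\max\{\|\tfrac32\wntau-\tfrac12\wntauminus\|_X,\|\tfrac32\wntauminus-\tfrac12\wntauminuss\|_X\}$ for $X=V$ or $X=H$. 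The triangle inequality then bounds each such quantity by $\tfrac32\|\wktau\|_X+\tfrac12\|\wktauminus\|_X$, and summing squares over $n$ (resp.\ taking the max) together with \eqref{a1} gives \eqref{e1}, while \eqref{a2} (and the $H$-bound on $\wzero$ from \eqref{wzero}) gives \eqref{e7}. For \eqref{e4}, differentiate \eqref{w_tau}: on $[0,\tau]$, $w_\tau'(t)=\tfrac1\tau(\wone-\wzero)$, and on $I_n$ for $n\ge2$, $w_\tau'(t)=\tfrac1\tau\bigl(\tfrac32\wntau-2\wntauminus+\tfrac12\wntauminuss\bigr)$; therefore $\|w_\tau'\|_{\cV^*}^2=\tau\bigl\|\tfrac1\tau(\wone-\wzero)\bigr\|_{V^*}^2+\tau\sum_{n=2}^N\bigl\|\tfrac1\tau(\tfrac32\wntau-2\wntauminus+\tfrac12\wntauminuss)\bigr\|_{V^*}^2$, which is exactly \eqref{a4} plus \eqref{a5}.

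Finally, \eqref{e6} is the combination $\|\bar w_\tau\|_{M^{2,2}(0,T;V,V^*)}=\|\bar w_\tau\|_{\cV}+\|\bar w_\tau\|_{BV^2(0,T;V^*)}$; the first summand is \eqref{e2}, so it remains to bound the $BV^2$-seminorm. Since $\bar w_\tau$ is piecewise constant with jumps only at the nodes $t_n$, for any partition $\pi$ of $[0,T]$ one has $\sum_{\sigma_i\in\pi}\|\bar w_\tau(b_i)-\bar w_\tau(a_i)\|_{V^*}^2\le C\sum_{n=2}^N\|\wntau-\wntauminus\|_{V^*}^2$ (grouping the jumps, and using that squaring is controlled under finite regrouping up to a constant, or more simply that the supremum over partitions is attained by the partition at the nodes since intermediate points only split a single jump). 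It then suffices to show $\tau^{-1}\sum_{n=2}^N\|\wntau-\wntauminus\|_{V^*}^2$, hence a fortiori $\sum_{n=2}^N\|\wntau-\wntauminus\|_{V^*}^2$ when $\tau\le1$, is bounded; writing $\wntau-\wntauminus$ in terms of $\delta_n:=\wntau-2\wntauminus+\wntauminuss$ via a telescoping/summation argument, or directly applying Lemma~\ref{BWK} to the increments, reduces this to \eqref{a5} (which controls $\tau\sum_n\|\tfrac1\tau(\tfrac32\wntau-2\wntauminus+\tfrac12\wntauminuss)\|_{V^*}^2$) together with \eqref{a4}. I expect this last step, extracting the $BV^2(0,T;V^*)$-bound on $\bar w_\tau$ from the second-difference estimate \eqref{a5}, to be the only place needing genuine care: one must relate the first differences $\wntau-\wntauminus$ to the combination $\tfrac32\wntau-2\wntauminus+\tfrac12\wntauminuss=\tfrac32(\wntau-\wntauminus)-\tfrac12(\wntauminus-\wntauminuss)$ appearing in \eqref{a5}, which is precisely the algebraic identity underlying Lemma~\ref{BWK} with $\delta_n=\wntau-\wntauminus$, so that Lemma~\ref{BWK} closes the argument.
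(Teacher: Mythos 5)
Your argument is essentially the paper's: the immediate identifications with \eqref{a1}--\eqref{a3} and \eqref{a7}, the Cauchy--Schwarz estimate for \eqref{e10}, the explicit formula for $w_\tau'$ giving \eqref{e4} as \eqref{a4} plus \eqref{a5}, and the reduction of \eqref{e6} to \eqref{a4}--\eqref{a5} via Lemma~\ref{BWK} applied to the first differences $\wntau-\wntauminus$. The one claim you should repair is the justification of the $BV^2(0,T;V^*)$ seminorm bound for the piecewise constant $\bar w_\tau$: for $q=2$ the supremum over partitions is \emph{not} attained at the nodal partition, because a coarse subinterval spanning jumps $j_1,\dots,j_m$ contributes $\|j_1+\cdots+j_m\|_{V^*}^2$, which can exceed $\sum_k\|j_k\|_{V^*}^2$ (take $j_1=j_2$), so your inequality $\|\bar w_\tau\|^2_{BV^2(0,T;V^*)}\le C\sum_n\|\wntau-\wntauminus\|_{V^*}^2$ cannot hold with $C$ independent of $N$. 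The correct statement, which the paper cites from \cite{Kalita2013}, carries the extra Cauchy--Schwarz factor $N=T/\tau$, i.e.\ $\|\bar w_\tau\|^2_{BV^2(0,T;V^*)}\le T\tau\sum_{n}\|(\wntau-\wntauminus)/\tau\|_{V^*}^2$. Your proof survives intact because you do in the end bound the stronger quantity $\tau^{-1}\sum_n\|\wntau-\wntauminus\|_{V^*}^2$ through Lemma~\ref{BWK}, \eqref{a4} and \eqref{a5} --- exactly what the correct inequality demands --- but the fallback ``it suffices to bound $\sum_n\|\wntau-\wntauminus\|_{V^*}^2$ when $\tau\le 1$'' would not be enough and should be dropped. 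Everything else, including your convexity/endpoint argument for \eqref{e1} and \eqref{e7} (the latter not written out explicitly in the paper), is correct.
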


\noindent \begin{proof}
 Estimates \eqref{e9}, \eqref{e2}, \eqref{e3} and (\ref{e8}), respectively, follow immediately from \eqref{a7} and \eqref{a1}-(\ref{a3}), respectively. 
We now prove (\ref{e1}). Applying formula (\ref{w_tau}) and estimate (\ref{a1}), we calculate 
\begin{align}
&\nonumber \|w_\tau\|_{\cV}^2 = \int_0^T \|w_\tau (t)\|^2\, dt =\int_0^\tau \left\|\frac32 \wone-\frac12\wzero + \left(\wone-\wzero\right)\frac{t-t_1}{\tau}\right\|^2\, dt  \\[2mm]
&\nonumber+\sum_{n=2}^N \int_{(n-1)\tau}^{n\tau} \left\|\frac32\wntau-\frac12 \wntauminus + \left(\frac32 \wntau-2\wntauminus + \frac12\wntauminuss\right)\frac{t-t_n}{\tau}\right\|^2\, dt \\[2mm]
&\le C\int_0^\tau \|\wone\|^2 + \|\wzero\|^2 + \left|\frac{t-t_1}{\tau}\right| (\|\wone\|^2 + \|\wzero\|^2)+\sum_{n=2}^N C\tau (\|\wntau\|^2 + \|\wntauminus\|^2 + \|\wntauminuss\|^2)\nonumber\\[2mm]
& \nonumber \le C(\tau\|\wone\|^2 + \tau\|\wzero\|^2)+3 C \tau \sum_{n=0}^N \|\wntau\|^2 = C\tau \sum_{n=0}^N \|\wntau\|^2\leq C. 
\end{align}
This proves \eqref{e1}. We now prove (\ref{e10}). To this end, for $t\in (0,T)$, we estimate
\begin{align*}
	\|u_\tau(t)\|\leq \|u_\tau^0\|+\int_0^t\|\bar{w}_\tau(s)\|\,ds\leq\|u_\tau^0\|+\int_0^T\|\bar{w}_\tau(s)\|\,ds\leq \|u_\tau^0\|+\sqrt{T}\|\bar{w}_\tau\|_{\cV}. 
\end{align*}
Hence, (\ref{e10}) follows from \eqref{uzero} and \eqref{e2}.\\
 Applying formula (\ref{w_tau}) and using (\ref{a4})-(\ref{a5}), we estimate
\begin{align}\label{poprawki_8}
	\|w_\tau'\|^2_{L^2(0,T;V^*)}&=\int_0^\tau\|w'_\tau(t)\|_{V^*}^2\,dt+\sum_{n=2}^N\int_{(n-1)\tau}^{n\tau}\|w'_\tau(t)\|^2_{V^*}\,dt\nonumber\\[2mm]
	&=\tau \left\|\frac{\wone-\wzero}{\tau}\right\|_{V^*}^2+\tau\sum_{n=2}^N\left\|\frac{1}{\tau}\left(\frac32 \wntau-2\wntauminus + \frac12 \wntauminuss\right)\right\|_{V^*}^2\leq C,
\end{align}
which proves \eqref{e4}. We now  pass to the proof of \eqref{e6}. Taking into account \eqref{e2} it is enough to estimate $\|\bar{w}_\tau\|_{BV^2(0,T;V^*)}$. We know from \cite{Kalita2013} that
\begin{equation}
\label{es2}\|\bar{w}_\tau\|_{BV^2(0,T;V^*)}\le T\tau \sum_{n=1}^N \left\|\frac{\wntau-\wntauminus}{\tau}\right\|_{V^*}^2.
\end{equation}
We denote $\delta_n=\frac{1}{\tau}(\wntau-\wntauminus)$ \,\, for $ \ n=1,\ldots,N$. Then, from Lemma~\ref{BWK}
\begin{align}
&\label{es3}\tau\sum_{n=1}^N\left\|\frac{\wntau-\wntauminus}{\tau}\right\|_{V^*}^2 \\[2mm]
&\nonumber\le C\left(\tau\left\|\frac{\wone-\wzero}{\tau}\right\|^2_{V^*} + \tau\sum_{n=2}^N \left\|\frac1\tau \left(\frac32(\wntau-\wntauminus) -\frac12(\wntauminus  -w_{\tau}^{n-2}\right)\right\|^2_{V^*} \right)
 \\[2mm]
&\nonumber=C\left(\tau\left\|\frac{\wone-\wzero}{\tau}\right\|^2_{V^*} +\tau\sum_{n=2}^N \left\|\frac1\tau\left(\frac32 \wntau - 2\wntauminus +\frac12 w_\tau^{n-2}  \right)\right\|_{V^*}^2\right).
\end{align}
Applying \eqref{a4}, \eqref{a5} and \eqref{es3} in \eqref{es2} completes the proof of \eqref{e6}.
\end{proof}

\begin{lemma}\label{Z2} The following convergences hold
\begin{align}
&w_\tau-\bar{w}_\tau \to 0 \,\,\,\,\, \mbox{in} \,\,\,\,\, {\cV^*} \  \textrm{as} \ \tau \to 0,\label{convergence_1}\\[2mm]
&u_\tau - \bar{u}_\tau \to 0 \,\,\,\,\, \mbox{in} \,\,\,\,\, {\cV} \  \textrm{as} \ \tau \to 0.\label{convergence_2}
\end{align}
\end{lemma}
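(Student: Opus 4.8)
The plan is to establish both convergences by direct computation from the definitions of the Rothe functions, exploiting the a-priori bounds of Lemma~\ref{apriori2}. For \eqref{convergence_1}, I would compute $w_\tau(t)-\bar w_\tau(t)$ piecewise on each subinterval. On $[0,\tau]$ one has $w_\tau(t)-\bar w_\tau(t) = \frac32\wone-\frac12\wzero+(\wone-\wzero)\frac{t-t_1}{\tau}-\wone = \frac12(\wone-\wzero)+(\wone-\wzero)\frac{t-t_1}{\tau}$, and since $\frac{t-t_1}{\tau}\in[-1,0]$ on this interval, the $V^*$-norm of this quantity is bounded by $C|\wone-\wzero|$ (using $H\hookrightarrow V^*$), so its $L^2(0,\tau;V^*)$-norm squared is $\le C\tau|\wone-\wzero|^2$, which tends to $0$ by Lemma~\ref{Lemma_w0_w1} (or even just by boundedness of $|\wone-\wzero|$). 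On $I_n$ for $n\ge 2$, $w_\tau(t)-\bar w_\tau(t)=\frac32\wntau-\frac12\wntauminus-\wntau+\big(\frac32\wntau-2\wntauminus+\frac12\wntauminuss\big)\frac{t-t_n}{\tau} = \frac12(\wntau-\wntauminus)+\big(\frac32\wntau-2\wntauminus+\frac12\wntauminuss\big)\frac{t-t_n}{\tau}$. Integrating the square of the $V^*$-norm over $I_n$ and summing, I get a bound of the form $C\tau\sum_{n=2}^N|\wntau-\wntauminus|^2 + C\tau\sum_{n=2}^N\big\|\frac32\wntau-2\wntauminus+\frac12\wntauminuss\big\|_{V^*}^2$ up to the $\frac1\tau$ factors; the second sum is $\tau^2$ times \eqref{a5}, hence $O(\tau)$. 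For the first sum, I would write $\wntau-\wntauminus$ in terms of the double-step increments via Lemma~\ref{BWK} (exactly as in \eqref{es3}), so that $\tau\sum|\wntau-\wntauminus|^2 \le C\tau\big(\tau^2\|\frac{\wone-\wzero}{\tau}\|^2 + \tau^2\sum\|\frac1\tau(\frac32\wntau-2\wntauminus+\frac12\wntauminuss)\|^2\big)$ in the $H$ or $V^*$ norm; combined with \eqref{a4}--\eqref{a5} this is again $O(\tau)$. Summing the two interval contributions gives \eqref{convergence_1}.

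For \eqref{convergence_2}, I would use $u_\tau(t)=\uzero+\int_0^t\bar w_\tau(s)\,ds$ and the fact that $\bar u_\tau(t)=\untau$ equals $\uzero+\int_0^{t_n}\bar w_\tau(s)\,ds$ for $t\in I_n$ (this is just the telescoping of \eqref{1b} and \eqref{2b}; more directly, $u_\tau(t_n)=\untau$, which follows since $u_\tau(t_n)-u_\tau(t_{n-1})=\int_{t_{n-1}}^{t_n}\bar w_\tau(s)\,ds=\tau\wntau$ matches \eqref{1b}, and for $n\ge2$ one must check that \eqref{2b} is consistent with $\untau-\untauminus=\tau\wntau$ — wait, \eqref{2b} gives a different recursion, so more care is needed here). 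In fact the cleanest route is: for $t\in I_n$, $u_\tau(t)-\bar u_\tau(t) = u_\tau(t)-\untau$. Since $\bar w_\tau$ is constant equal to $\wntau$ on $I_n$ and $u_\tau' = \bar w_\tau$ by \eqref{poprawki_23}, on $I_n$ we have $u_\tau(t)-u_\tau(t_n) = (t-t_n)\wntau$, so $\|u_\tau(t)-u_\tau(t_n)\|\le\tau\|\wntau\|$. It then remains to control $\|u_\tau(t_n)-\untau\|$. If $u_\tau(t_n)=\untau$ holds exactly for all $n$ (which I would verify by induction using \eqref{1b}, \eqref{2b} and the telescoping identity \eqref{telescop}), then $\|u_\tau(t)-\bar u_\tau(t)\|\le\tau\|\wntau\|$ on $I_n$, and hence $\|u_\tau-\bar u_\tau\|_{\cV}^2 \le \tau^2\sum_{n=1}^N\tau\|\wntau\|^2 \le C\tau^2$ by \eqref{a1}, giving \eqref{convergence_2}.

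The main obstacle I anticipate is the verification that $u_\tau(t_n)=\untau$, i.e.\ that the integral (Euler-type) reconstruction $u_\tau$ of $\bar w_\tau$ agrees at the nodes with the sequence $\untau$ defined by the double-step relation \eqref{2b}. Relation \eqref{2b} rearranges to $\wntau = \frac1\tau(\frac32\untau-2\untauminus+\frac12\untauminuss)$, which is the double-step derivative formula, \emph{not} $\untau-\untauminus=\tau\wntau$; so the naive telescoping fails and instead one must use the geometric-series representation \eqref{ap21}, or argue that $u_\tau(t_n)$ and $\untau$ satisfy the same two-step recursion with the same initial data $u_\tau(t_0)=\uzero=\uzero$ and $u_\tau(t_1)=\uzero+\tau\wone=\uone$. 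Precisely: $u_\tau(t_n)-u_\tau(t_{n-1})=\tau\wntau$, and I claim $\frac32 u_\tau(t_n)-2u_\tau(t_{n-1})+\frac12 u_\tau(t_{n-2}) = \frac32 u_\tau(t_n)-\frac32 u_\tau(t_{n-1}) - \frac12 u_\tau(t_{n-1})+\frac12 u_\tau(t_{n-2}) = \frac32\tau\wntau - \frac12\tau\wntauminus$, whereas \eqref{2b} requires this to equal $\tau\wntau$. These disagree unless $\wntau=\wntauminus$, so in fact $u_\tau(t_n)\ne\untau$ in general — which means I should \emph{not} claim nodal equality, but rather bound $\|u_\tau(t_n)-\untau\|$ directly. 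The correct approach: both $\{u_\tau(t_n)\}$ and $\{\untau\}$ can be expressed via \eqref{ap21}-type geometric sums of the same quantities $\tau\wntau$, and their difference telescopes into a bounded multiple of $\tau\sum\|\wntau-\wntauminus\|$ (or similar), which by the argument above for \eqref{convergence_1} is $O(\sqrt\tau)$; more simply, one estimates $\|u_\tau(t)-\bar u_\tau(t)\|_\cV$ by splitting $u_\tau(t)-\untau = \int_{t_n}^t\bar w_\tau + (u_\tau(t_n)-\untau)$ and bounding the second term using \eqref{a7}, \eqref{e10} and a direct comparison of the two reconstructions, noting the difference of the two sequences at step $n$ is $\tau$ times a bounded-in-$\cV$ combination of the $\wktau$'s. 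I expect this bookkeeping, rather than any deep estimate, to be the delicate point; everything else is a routine application of \eqref{a1}, \eqref{a4}, \eqref{a5} and Lemma~\ref{BWK}.
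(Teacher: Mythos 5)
Your treatment of \eqref{convergence_1} is correct and essentially the paper's: you compute the same piecewise difference, and where the paper groups the constant part into the second difference $\frac14(\wntau-2\wntauminus+\wntauminuss)$ and invokes \eqref{a6}, you instead control $\tau\sum_{n}\|\wntau-\wntauminus\|_{V^*}^2$ via Lemma~\ref{BWK} with \eqref{a4}--\eqref{a5}; both give $O(\tau)$ (just make sure you work in the $V^*$-norm, since \eqref{a5} is a $V^*$ bound). For \eqref{convergence_2} you correctly identify the one genuine pitfall, namely that $u_\tau(t_n)\neq\untau$ because \eqref{2b} is a two-step rather than a one-step recursion, and your final plan coincides with the paper's; the bookkeeping you leave undone does close. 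Explicitly, combining $u_\tau(t_n)=\uone+\tau\sum_{k=2}^n\wktau$ (which uses \eqref{1b}) with the telescoped identity \eqref{telescop} yields the exact formula $u_\tau(t_n)-\untau=\frac12(\untau-\untauminus)-\frac12(\uone-\uzero)$; since $\frac32(\untau-\untauminus)-\frac12(\untauminus-\untauminuss)=\tau\wntau$ by \eqref{2b}, Lemma~\ref{BWK} and \eqref{a1} give $\sum_{n}\|\untau-\untauminus\|^2\le C\tau$, hence $\tau\sum_{n}\|\untau-\untauminus\|^2\le C\tau^2$, while $\tau\sum_{n}\|\uone-\uzero\|^2\le T\tau^2\|\wone\|^2\to0$ by \eqref{ap16c}; adding the interpolation error $\|(t-t_n)\wntau\|\le\tau\|\wntau\|$ and using \eqref{a1} once more gives \eqref{convergence_2}, exactly as in the paper's estimate \eqref{z10}.
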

\begin{proof} For the proof of (\ref{convergence_1}), we calculate the difference
\begin{equation}\label{poprawki_26}
 w_\tau(t) - \bar{w}_\tau(t) = \begin{cases}
(\wone - \wzero)\frac{t-1/2 \tau}{\tau} \qquad\qquad\qquad\qquad\qquad  \text{for}\,\, t\in I_1, \\[2mm]
(\frac32 \wntau - 2\wntauminus + \frac12 \wntauminuss) \frac{t-(n-1/2)\tau}{\tau} \\[2mm]
\qquad\qquad\,\, -\frac14 (\wntau - 2\wntauminus + \wntauminuss)\  \quad\,\,\, \text{for}\,\, t\in I_n,\,n=2,\dots,N.
\end{cases}
\end{equation}
Hence, we can estimate
\begin{align*}
&\|w_\tau-\bar{w}_\tau\|_{L^2(0,T;V^*)}^2 = \int_0^\tau \|w_\tau(t)-\bar{w}_\tau(t)\|_{V^*}^2\,dt + \sum_{n=2}^N \int_{(n-1)\tau}^{n\tau} \|w_\tau(t) -\bar{w}_\tau(t)\|_{V^*}^2 \, dt  \\[2mm]
&\le \int_0^\tau \|\wone-\wzero\|_{V^*}^2 \left|\frac{t-1/2\tau}{\tau}\right|^2\,dt + 2\sum_{n=2}^N  \int_{(n-1)\tau}^{n\tau} \left\|\frac32 \wntau-2\wntauminus+\frac12 \wntauminuss\right\|_{V^*}^2 \left|\frac{t-(n-1/2)\tau}{\tau}\right|^2\,dt  \\[2mm]
&+\frac{1}{16}\cdot 2 \sum_{n=2}^N \int_{(n-1)\tau}^{n\tau} \|\wntau-2\wntauminus + \wntauminuss\|_{V^*}^2\, dt \le \frac{1}{12} \tau \|i^*\||\wone-\wzero|^2 \\[2mm]
& + \frac{1}{6} \tau\sum_{n=2}^N  \left\|\frac32\wntau-2\wntauminus+\frac12 \wntauminuss\right\|_{V^*}^2 + \frac18 \tau \|i^*\|\sum_{n=2}^N |\wntau-2\wntauminus+\wntauminuss|^2.
\end{align*}
 From Lemma~\ref{Lemma_w0_w1}, (\ref{a5}) and (\ref{a6}), the right hand side converges to $0$,
which completes the proof of (\ref{convergence_1}).\\
We now pass to the proof of (\ref{convergence_2}). First, for $t\in I_1$,
\begin{align}
&u_\tau(t)=\uzero + \int_0^t \bar{w}_\tau(s)\, ds = \uzero + \int_0^t \wone\, ds = \uzero +t\wone.
\end{align}
So, we can estimate 
\begin{align*}
&\|u_\tau(t)-\bar{u}_\tau(t)\|^2 = \|\uzero+t\wone-\uone\|^2 = \left\|\uzero+t\frac{\uone-\uzero}{\tau} - \uone\right\|^2 = \left\|(\uone-\uzero)\left(\frac{t}{\tau} -1\right) \right\|^2
\end{align*}
and, applying (\ref{ap16c}), we get 
\begin{align*}
&\int_0^\tau \|u_\tau(t)-\bar{u}_\tau(t)\|^2\, dt = \int_0^\tau \left(1-\frac{t}{\tau}\right)^2\|\uone-\uzero\|^2\, dt  \\[2mm] 
&=\frac13 \tau \|\uone-\uzero\|^2 = \frac13 \tau \tau^2 \|\frac{\uone - \uzero}{\tau}\|^2 = \frac13\tau^2 \tau \|\wone\|^2.
\end{align*}
Next, for $t\in I_k$, $k=2,\ldots,N$, applying (\ref{telescop}), we have
\begin{align*}
&u_\tau(t) =\uzero + \int_0^\tau \bar{w}_\tau (s)\, ds + \sum_{i=2}^{k-1}\int_{(i-1)\tau}^{i\tau} \bar{w}_\tau (s)\, ds + \int_{(k-1)\tau}^t \bar{w}_\tau(s)\ ds  \\[2mm]
&=\uzero + \int_0^\tau \frac{\uone-\uzero}{\tau}\, ds + \tau\sum_{i=2}^{k-1}w_\tau^i + (t-(k-1)\tau)\wktau  \\[2mm]
&=\uone + \frac12 \uzero-\frac32\uone - \frac12 \uktauminuss+\frac32\uktauminus + \frac{t-(k-1)\tau}{\tau} \left(\frac32\uktau-2\uktauminus+\frac12\uktauminuss\right).
\end{align*}
So the difference can be written as
\begin{equation}
\label{z9} u_\tau(t)-\bar{u}_\tau(t) = \frac12 \uzero-\frac12\uone -\frac12\uktauminuss+\frac32\uktauminus - \uktau + \frac{t-(k-1)\tau}{\tau}\left(\frac32\uktau - 2\uktauminus +\frac12\uktauminuss\right).
\end{equation}
Hence, \eqref{z9} implies that
\begin{align*}
\|u_\tau(t)-\bar{u}_\tau(t)\|^2 &\le 
3 \left( \frac14 \|\uzero-\uone\|^2 + \left\|\uktau - \frac32\uktauminus +\frac12\uktauminuss\right\|^2 + \left\|\frac32\uktau-2\uktauminus+\frac12\uktauminuss\right\|^2\right)  \\[2mm]
&\le 3\left(\frac14\|\uzero-\uone\|^2 + \frac{17}{9} \left\|\frac32\uktau-2\uktauminus+\frac12\uktauminuss\right\|^2 + \frac{1}{18} \|\uktauminus - \uktauminuss\|^2\right) \\[2mm]
&= 3\left(\frac14 \tau^2 \|\wone\|^2 + \frac{17}{9}\tau^2 \|\wktau\|^2 + \frac{1}{18}\|\uktauminus-\uktauminuss\|^2\right).
\end{align*}
In the last estimation we have used the following argument
\begin{align*}
\left\|\uktau - \frac32\uktauminus+\frac12\uktauminuss\right\|^2 &=\frac49 \left\|\frac32\uktau-\frac94\uktauminus+\frac34\uktauminuss\right\|^2 \\[2mm]
&=\frac49 \left\|\left(\frac32\uktau-2\uktauminus+\frac12\uktauminuss\right)-\frac14 \uktauminus+\frac14\uktauminuss\right\|^2 \\[2mm]
&\le \frac49 \cdot 2 \left(\left\|\frac32\uktau-2\uktauminus+\frac12\uktauminuss\right\|^2 + \frac{1}{16} \left\|\uktauminus-\uktauminuss\right\|^2\right).
\end{align*}
Thus, we get for $t\in I_k$, $k=2,\ldots,N$.
\begin{equation}
\|u_\tau(t)-\bar{u}_\tau(t)\|^2 \le C\left(\tau^2\|\wone\|^2 + \tau^2\|\wktau\|^2 + \|\uktauminus-\uktauminuss\|^2\right).
\end{equation}
We can now estimate the full norm
\begin{align}
&\nonumber\|u_\tau-\bar{u}_\tau\|_{\cV}^2 = \int_0^\tau\|u_\tau(t)-\bar{u}_\tau(t)\|^2\, dt + \sum_{k=2}^N \int_{(k-1)\tau}^{k\tau} \|u_\tau(t)-\bar{u}_\tau(t)\|^2\, dt \\[2mm]
&\nonumber\le \frac13\tau^2\tau \|\wone\|^2 + \sum_{k=2}^N C\left(\tau^3\|\wone\|_V^2 + \tau^3 \|\wktau\|^2 + \tau \|\uktauminus-\uktauminuss\|^2\right) \\[2mm]
&\nonumber\le \frac13 \tau^2 \tau \|\wone\|^2 + CN\tau^3 \|\wone\|^2 + \tau^3 \sum_{k=2}^N \|\wktau\|^2 + \tau \sum_{k=2}^N \|\uktauminus-\uktauminuss\|^2 \\[2mm]
&\le \frac13 \tau^2 \tau \|\wone\|^2 + CT\tau \tau\|\wone\|^2 + \tau^2 \tau \sum_{k=2}^2 \|\wktau\|^2 + \tau \sum_{k=2}^N \|\uktauminus-\uktauminuss\|^2. \label{z10}
\end{align}
From Lemma~\ref{BWK} we have 
\begin{align*}
&\sum_{k=2}^N \|\uktauminus-\uktauminuss\|^2 = \sum_{k=1}^{N-1} \|\uktau-\uktauminus\|^2 \\[2mm]
&\le C\left(\sum_{i=2}^N \left\|\frac32(\uktau-\uktauminus) -\frac12 (\uktauminus-\uktauminuss)\right\|^2 + \|\uone-\uzero\|^2\right)  \\[2mm]
&= C\left(\sum_{k=2}^N \tau^2 \|\wktau\|^2 + \tau^2 \|\wone\|^2\right) = C\left( \tau \tau \sum_{k=1}^{N} \|\wktau\|^2\right) \to 0.
\end{align*}
Applying the above convergence together with  (\ref{ap16c}) and (\ref{a1}) to the right hand side of \eqref{z10}, we get the proof of (\ref{convergence_2}).
\end{proof}\\

\noindent At the end of this section we observe that \eqref{1}--\eqref{2a} imply
\begin{align}
& \label{f1}w_\tau'(t) + A\bar{w}_\tau(t) + B{u}_\tau(t) + \iota^* \bar{\xi}_\tau(t) = \bar{f}_\tau(t)  \ \ \mbox{for a.e.} \ t\in (0,T),\\[2mm]
& \label{f2}\bar{\xi}_\tau (t)\in \partial j(\iota \bar{w}_\tau(t))  \ \quad\qquad\qquad\qquad\qquad\qquad \mbox{for a.e.} \ t\in (0,T).
\end{align}

\section{Convergence}\label{Sec_5}
In this section we study a convergence of the functions introduced in the previous section to a solution of Problem ${\cal P}$.
For that purpose, we will provide  their convergence to some element in space $\cal V$ and then, passing to the limit in \eqref{f1} and \eqref{f2}, we will show that the limit element solves the original problem.  
First of all, we introduce the Nemytskii operators $\mathcal{A}\colon \cV\to \cV^*$, $\mathcal{B}\colon \cV\to\cV^*$, $\bar{\iota}\colon \cV\to \cU$ corresponding to $A$, $B$ and $\iota$, respectively, given by 
\[
(\mathcal{A}v)(t) = Av(t), \ (\mathcal{B}v)(t) = Bv(t) \ \mbox{and} \ (\bar{\iota}v)(t) = \iota v(t)\qquad \mbox{for all} \ v\in \cV.
\]
Let $\bar{\iota}^*\colon \cU^*\to\cV^*$ denote the adjoint operator to $\bar{\iota}$. Then \eqref{f1} and \eqref{f2} are equivalent with
\begin{align}
&\label{c-1}w_\tau'(t) + (\mathcal{A}\bar{w}_\tau)(t) + (\mathcal{B}\bar{u}_\tau)(t)+ (\bar{\iota}^*\bar{\xi}_\tau)(t) = \bar{f}_\tau(t) \quad\,\,\, \mbox{for a.e.} \ t\in (0,T),\\[2mm]
&\label{c0}\bar{\xi}_\tau(t)\in \partial j((\bar{\iota}\bar{w}_\tau)(t))  \qquad\qquad\qquad\qquad\qquad\qquad\quad\,\,\,\, \mbox{for a.e.} \ t\in (0,T).
\end{align}

\noindent In what follows, we will use the following pseudomonotonicity property of operator ${\cal A}$ (see Lemma 1 in \cite{Kalita2013}).

\begin{lemma}\label{lemma:Bartosz2}
	Assume that $H(A)$ holds and a sequence $\{v_n\}\subset {\cal V}$ satisfies: $v_n$ is bounded in $M^{2,2}(0,T;V,V^*)$, $v_n\to v$ weakly in ${\cal V}$ and $\limsup_{n\to\infty}\dual{{\cal A}v_n,v_n-v}{\cal V}\leq 0$. Then ${\cal A}v_n\to {\cal A}v$ weakly in ${\cal V^*}$. 
\end{lemma}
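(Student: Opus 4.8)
The plan is to extract weak limits along a subsequence, identify them by combining a compactness argument in time with the pointwise (in $t$) pseudomonotonicity of $A$, and then conclude for the full sequence by the usual subsequence principle. First I would use that $\{v_n\}$ is bounded in $M^{2,2}(0,T;V,V^*)$, that $V\subset H$ is compact and $H\subset V^*$ continuous, and apply Proposition~\ref{prop:Bartosz6} with $X_1=V$, $X_2=H$, $X_3=V^*$ to get that $\{v_n\}$ is relatively compact in $\mathcal H$; together with $v_n\to v$ weakly in $\mathcal V$ this gives $v_n\to v$ strongly in $\mathcal H$, hence, along a subsequence, $v_n(t)\to v(t)$ in $H$ for a.e.\ $t$ and $|v_n(\cdot)|^2\to|v(\cdot)|^2$ in $L^1(0,T)$ with an $L^1$ dominating function. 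By $H(A)$(ii) and boundedness of $\{v_n\}$ in $\mathcal V$, $\{\mathcal Av_n\}$ is bounded in $\mathcal V^*$, so along a further subsequence $\mathcal Av_n\to\chi$ weakly in $\mathcal V^*$ for some $\chi\in\mathcal V^*$. It then suffices to prove $\chi=\mathcal Av$, since the limit turns out to be always $\mathcal Av$ and the subsequence argument upgrades the conclusion to the whole sequence.

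Next I would set $a_n(t)=\langle Av_n(t),v_n(t)-v(t)\rangle$ and use $H(A)$(iii), the growth bound $\|Av_n(t)\|_{V^*}\le a+b\|v_n(t)\|$ from $H(A)$(ii) and Young's inequality to obtain a pointwise estimate of the form $a_n(t)\ge\tfrac12\alpha\|v_n(t)\|^2-g_n(t)$, where $g_n\to g$ in $L^1(0,T)$ by the first step and $g\in L^1(0,T)$ since $v\in\mathcal V$. In particular $a_n+g_n\ge0$, so Fatou's lemma applies to $a_n+g_n$ and yields
\[
\int_0^T\liminf_{n}a_n(t)\,dt\ \le\ \liminf_{n}\int_0^T a_n(t)\,dt\ =\ \liminf_{n}\langle\mathcal Av_n,v_n-v\rangle_{\mathcal V}\ \le\ 0 .
\]
The same estimate also gives $\tfrac12\alpha\|v_n(t)\|^2\le a_n(t)+g_n(t)$, whence $\int_0^T\liminf_n\|v_n(t)\|^2\,dt<\infty$, so $\liminf_n\|v_n(t)\|<\infty$ for a.e.\ $t$. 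Fixing such a $t$ (also with $v_n(t)\to v(t)$ in $H$) and a subsequence realising $\liminf_n a_n(t)$, the values $\|v_n(t)\|$ must stay bounded along it (otherwise $a_n(t)\to+\infty$), so $v_n(t)\rightharpoonup v(t)$ in $V$; pseudomonotonicity of $A$, applied with the test element $y=v(t)$ in the only nontrivial case $\liminf_n a_n(t)\le0$, then forces $\liminf_n a_n(t)\ge0$. Combining the two bounds, $\liminf_n a_n(t)=0$ a.e.\ and $\langle\mathcal Av_n,v_n-v\rangle_{\mathcal V}\to0$.

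With this in hand I would upgrade the convergence of $a_n$: since $a_n+g_n\ge0$, $g_n\to g$ in $L^1$, $\liminf_n(a_n+g_n)=g$ a.e.\ and $\int_0^T(a_n+g_n)\,dt\to\int_0^T g\,dt$, a Scheff\'e-type argument (the negative parts of $a_n$ are dominated by $g_n$, while the positive parts cannot concentrate without spoiling $\int_0^T a_n\,dt\to0$) shows that $a_n\to0$ in measure on $(0,T)$, hence $a_n(t)\to0$ for a.e.\ $t$ along a further subsequence. For such $t$ the estimate of the previous paragraph makes $\{v_n(t)\}$ bounded in $V$, so $v_n(t)\rightharpoonup v(t)$ in $V$ with $\limsup_n\langle Av_n(t),v_n(t)-v(t)\rangle=0$, and pseudomonotonicity of $A$ together with boundedness of $\{Av_n(t)\}$ in $V^*$ (from $H(A)$(ii)) forces every weak cluster point of $\{Av_n(t)\}$ to equal $Av(t)$, i.e.\ $Av_n(t)\rightharpoonup Av(t)$ in $V^*$ for a.e.\ $t$. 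Finally, for fixed $w\in V$ and $\varphi\in L^2(0,T)$ I pass to the limit in $\int_0^T\varphi(t)\langle Av_n(t),w\rangle\,dt$ in two ways --- using $\mathcal Av_n\rightharpoonup\chi$ in $\mathcal V^*$ on one side, and the a.e.\ convergence $\langle Av_n(\cdot),w\rangle\to\langle Av(\cdot),w\rangle$ together with the uniform $L^2(0,T)$ bound on $\langle Av_n(\cdot),w\rangle$ on the other --- to deduce $\langle\chi(t),w\rangle=\langle Av(t),w\rangle$ a.e.\ for every $w$ in a countable dense subset of $V$, hence $\chi=\mathcal Av$.

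The hard part is the transfer between the \emph{global} hypothesis $\limsup_n\langle\mathcal Av_n,v_n-v\rangle_{\mathcal V}\le0$ and the \emph{pointwise} pseudomonotonicity of $A$, which only ``sees'' one value of $t$ at a time: Fatou is available in one direction only, so one must first squeeze out $\liminf_n a_n(t)=0$ a.e.\ and then use the Scheff\'e-type step to promote this to genuine a.e.\ convergence $a_n(t)\to0$ along a subsequence, which is exactly what makes the pointwise pseudomonotonicity of $A$ applicable. The compact embedding in time and the semicoercivity estimate $H(A)$(iii) are what close this circle; the compactness reduction, the coercivity bound and the final identification of the limit are otherwise routine.
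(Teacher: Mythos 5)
Your proof is correct and complete. The paper does not prove this lemma itself --- it quotes it as Lemma~1 of \cite{Kalita2013} --- and your argument is essentially the standard one from that source: compactness of $M^{2,2}(0,T;V,V^*)\subset L^2(0,T;H)$ to get a.e.\ convergence in $H$, the semicoercivity $H(A)$(iii) plus the growth bound to control $a_n(t)=\langle Av_n(t),v_n(t)-v(t)\rangle$ from below, Fatou and pointwise pseudomonotonicity to force $\liminf_n a_n(t)=0$ a.e., a Pratt/Scheff\'e step (negative parts dominated by $g_n\to g$ in $L^1$) to upgrade to $a_n\to 0$ in $L^1$ and hence a.e.\ along a subsequence, pointwise identification $Av_n(t)\rightharpoonup Av(t)$, and the subsequence principle. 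All the delicate points (boundedness of $\|v_{n_k}(t)\|$ along the liminf-realising subsequence, identification of the weak $V$-cluster point via the strong $H$-limit, and the varying-dominating-function versions of Fatou and dominated convergence) are handled correctly.
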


\noindent We are now in a position to provide the main convergence result. 
\begin{theorem}\label{theorem_1}
	There exist functions $u\in\cV$ and $w\in\mathcal{W}$ such that 
	\begin{align}
	& \label{c-2} u_\tau \to u \ \mbox{weakly in} \ \mathcal{V} \ \mbox{and weakly* in} \ L^\infty(0,T;V), \\[2mm]
&\label{c-3}\bar{u}_\tau \to u \ \mbox{weakly in} \ \mathcal{V} \ \mbox{and weakly* in} \ L^\infty(0,T;V),\\[2mm]
	& \label{c1} w_\tau \to w \ \mbox{weakly in} \ \mathcal{W} \ \mbox{and weakly* in} \ L^\infty(0,T;H), \\[2mm]
	&\label{c2}\bar{w}_\tau \to w \ \mbox{weakly in} \ \mathcal{V} \ \mbox{and weakly* in} \ L^\infty(0,T;H).
	\end{align}
Moreover, $w=u'$ and $u$ is a solution of Problem ${\cal P}$.
\end{theorem}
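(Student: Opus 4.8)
The plan is to pass to the limit $\tau\to 0$ (along a subsequence) in the semidiscrete system \eqref{c-1}--\eqref{c0}, using the uniform bounds of Lemma~\ref{apriori2}. \emph{Step 1 (extraction of limits and initial data).} By \eqref{e9}--\eqref{e8}, reflexivity and the Banach--Alaoglu theorem, along a subsequence $u_\tau,\bar u_\tau$ converge weakly in $\cV$ and weakly* in $L^\infty(0,T;V)$, $w_\tau$ converges weakly in $\cW$ and weakly* in $L^\infty(0,T;H)$, $\bar w_\tau$ converges weakly in $\cV$ and weakly* in $L^\infty(0,T;H)$, and $\bar\xi_\tau$ converges weakly in $\cU^*$. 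By Lemma~\ref{Z2} the limits of $u_\tau$ and $\bar u_\tau$ coincide (call it $u$), and so do those of $w_\tau$ and $\bar w_\tau$ (call it $w$), which is \eqref{c-2}--\eqref{c2}; since $u_\tau'=\bar w_\tau$ by \eqref{poprawki_23}, passing to the distributional limit gives $u'=w$, and $w\in\cW$ follows from the weak convergence in $\cW$. Because $\cW\hookrightarrow C([0,T];H)$ continuously, weak convergence in $\cW$ gives $u_\tau(0)\to u(0)$ and $w_\tau(0)\to w(0)$ weakly in $H$; together with \eqref{uzero}, \eqref{wzero}, \eqref{poprawki_21} and Lemma~\ref{Lemma_w0_w1} (so $\wone,\wzero\to w_0$ in $H$) this yields $u(0)=u_0$ and $u'(0)=w(0)=w_0$.

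\emph{Step 2 (strong convergence and the multivalued term).} By \eqref{e6}, $\bar w_\tau$ is bounded in $M^{2,2}(0,T;V,V^*)$; as $V\subset Z$ is compact and $Z\subset V^*$ is continuous, Proposition~\ref{prop:Bartosz6} gives, along a further subsequence, $\bar w_\tau\to w$ strongly in $L^2(0,T;Z)$ and $\bar w_\tau(t)\to w(t)$ in $Z$ for a.e.\ $t$, whence $\bar\iota\bar w_\tau\to\bar\iota w$ strongly in $\cU$ and $\iota\bar w_\tau(t)\to\iota w(t)$ in $U$ a.e. The Clarke subdifferential $\partial j\colon U\to 2^{U^*}$ has nonempty, closed, convex values and, thanks to the growth condition $H(j)(ii)$, is upper semicontinuous from $U$ into $U^*$ with its weak topology; since moreover $\bar\xi_\tau\to\xi$ weakly in $\cU^*$, hence weakly in $L^1(0,T;U^*)$, Proposition~\ref{prop:Bartosz7} applied with $F=\partial j$, $x_\tau=\iota\bar w_\tau$, $y_\tau=\bar\xi_\tau$ yields $\xi(t)\in\partial j(\iota w(t))=\partial j(\iota u'(t))$ a.e., i.e.\ \eqref{3.2}.

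\emph{Step 3 (limit in the equation).} In \eqref{c-1} we have $w_\tau'\to w'$ weakly in $\cV^*$, $\mathcal B\bar u_\tau\to\mathcal Bu$ weakly in $\cV^*$ (linearity and continuity of $\mathcal B$), $\bar\iota^*\bar\xi_\tau\to\bar\iota^*\xi$ weakly in $\cV^*$, and $\bar f_\tau\to f$ strongly in $\cV^*$ by \eqref{c13}. It remains to show $\mathcal A\bar w_\tau\to\mathcal Aw$ weakly in $\cV^*$; since $\bar w_\tau$ is bounded in $M^{2,2}(0,T;V,V^*)$ and $\bar w_\tau\to w$ weakly in $\cV$, by Lemma~\ref{lemma:Bartosz2} it suffices to prove $\limsup_{\tau\to0}\dual{\mathcal A\bar w_\tau,\bar w_\tau-w}{\cV}\le 0$. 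From \eqref{c-1},
\[
\dual{\mathcal A\bar w_\tau,\bar w_\tau-w}{\cV}=\dual{\bar f_\tau,\bar w_\tau-w}{\cV}-\dual{w_\tau',\bar w_\tau-w}{\cV}-\dual{\mathcal B\bar u_\tau,\bar w_\tau-w}{\cV}-\dual{\bar\xi_\tau,\bar\iota\bar w_\tau-\bar\iota w}{\cU}.
\]
The first term tends to $0$ (strong times weak) and the last tends to $0$ (boundedness of $\bar\xi_\tau$ in $\cU^*$ and strong convergence of $\bar\iota\bar w_\tau$). For the $\mathcal B$-term, replacing $\bar u_\tau$ by $u_\tau$ via Lemma~\ref{Z2}, using $u_\tau'=\bar w_\tau$ and the chain rule $\dual{\mathcal Bu_\tau,u_\tau'}{\cV}=\frac12\skalar{Bu_\tau(T),u_\tau(T)}-\frac12\skalar{Bu_\tau(0),u_\tau(0)}$ together with $\uzero\to u_0$ in $V$, $u_\tau(T)\to u(T)$ weakly in $V$ (boundedness in $V$ plus weak convergence in $H$), and weak lower semicontinuity of the nonnegative quadratic form $v\mapsto\skalar{Bv,v}$, one obtains $\liminf_\tau\dual{\mathcal B\bar u_\tau,\bar w_\tau-w}{\cV}\ge 0$. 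For the $w_\tau'$-term I would split $\bar w_\tau-w=(\bar w_\tau-w_\tau)+(w_\tau-w)$: the contribution of $\bar w_\tau-w_\tau$ is evaluated explicitly from \eqref{w_tau}, \eqref{poprawki_26} and the discrete identities \eqref{ap1}, \eqref{ap8}, and via telescoping, \eqref{a6} and Lemma~\ref{Lemma_w0_w1} it has nonnegative $\liminf$; the contribution of $w_\tau-w$ is treated by $\dual{w_\tau',w_\tau}{\cV}=\frac12|w_\tau(T)|^2-\frac12|w_\tau(0)|^2$, $w_\tau(0)\to w_0$ in $H$, weak lower semicontinuity of $|\cdot|$ at $t=T$, and $\dual{w_\tau',w}{\cV}\to\frac12|w(T)|^2-\frac12|w(0)|^2$. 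Hence $\liminf_\tau\dual{w_\tau',\bar w_\tau-w}{\cV}\ge 0$, and the required $\limsup$ inequality follows. Therefore $\mathcal A\bar w_\tau\to\mathcal Aw$ weakly in $\cV^*$, and passing to the limit in \eqref{c-1} gives $w'+\mathcal Aw+\mathcal Bu+\bar\iota^*\xi=f$ in $\cV^*$; together with $w=u'$, \eqref{3.2} and the initial conditions of Step 1, this shows that $u$ solves Problem~$\cP$.

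\emph{Main obstacle.} The crux is the inequality $\limsup_\tau\dual{\mathcal A\bar w_\tau,\bar w_\tau-w}{\cV}\le 0$: the terms $\dual{w_\tau',\bar w_\tau-w}{\cV}$ and $\dual{\mathcal B\bar u_\tau,\bar w_\tau-w}{\cV}$ are pairings of sequences converging only weakly, so they must be handled through the discrete energy identities, the telescoping structure of the double-step scheme and weak lower semicontinuity at $t=T$, which forces careful bookkeeping of the difference between the piecewise-constant interpolants $\bar w_\tau,\bar u_\tau$ and the piecewise-linear $w_\tau$ and its primitive $u_\tau$.
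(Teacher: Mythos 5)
Your proposal is correct and follows essentially the same route as the paper: extraction of weak limits from Lemma \ref{apriori2}, identification of the limits of the piecewise-constant and piecewise-linear interpolants via Lemma \ref{Z2}, recovery of the initial data through the embedding $\cW\subset C([0,T];H)$ together with Lemma \ref{Lemma_w0_w1}, the key inequality $\limsup\dual{\mathcal{A}\bar w_\tau,\bar w_\tau-w}{\cV}\le 0$ handled by the same decomposition of the $\mathcal{B}$- and $w_\tau'$-terms (discrete telescoping identity plus lower semicontinuity at $t=T$) so that Lemma \ref{lemma:Bartosz2} applies, and Propositions \ref{prop:Bartosz6} and \ref{prop:Bartosz7} for the multivalued term. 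The only differences are presentational (you use weak lower semicontinuity of the quadratic forms at $t=T$ where the paper works with the difference $u_\tau-u$, resp.\ $w_\tau-w$, and drops the terminal term by positivity), which does not change the argument.
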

\begin{proof}
First, we observe that the bounds (\ref{e9}) and (\ref{e10}) imply that $\|\bar{u}_\tau\|_{\cV}, \|{u}_\tau\|_{\cV}\leq C$. From these and the other bounds 	 obtained in Lemma~\ref{apriori2} there exist $u_1, u_2\in \cV$,  $w_1, w_2\in \cV$, $w_3\in\cV^*$ and $\xi\in\cU^*$, such that
	\begin{align}
&\label{c27}\bar{u}_\tau \to u_1 \ \mbox{weakly in} \ \mathcal{V} \ \mbox{and weakly* in} \ L^\infty(0,T;V), \\[2mm]
&\label{c28}u_\tau \to u_2 \ \mbox{weakly in} \ \mathcal{V} \ \mbox{and weakly* in} \ L^\infty(0,T;V), \\[2mm]
	&\label{c3}\bar{w}_\tau \to w_1 \ \mbox{weakly in} \ \mathcal{V} \ \mbox{and weakly* in} \ L^\infty(0,T;H), \\[2mm]
	&\label{c4}w_\tau \to w_2 \ \mbox{weakly in} \ \mathcal{V} \ \mbox{and weakly* in} \ L^\infty(0,T;H), \\[2mm]
	&\label{c5}w_\tau'\to w_3 \ \mbox{weakly in} \ \mathcal{V}^*,\\[2mm]
	&\label{c6}\bar{\xi}_\tau \to \xi \ \mbox{weakly in} \ \mathcal{U}^*.
	\end{align}

\noindent From (\ref{c27})-(\ref{c28}) we have $\bar{u}_\tau-u_\tau\to u_1-u_2$ weakly in $\cV$. On the other hand, from (\ref{convergence_2}), $\bar{u}_\tau-u_\tau\to 0$ weakly in $\cV$. By the uniqueness of the weak limit we have $u_1-u_2=0$, hence, $u_1=u_2=:u$, which proves (\ref{c-2}) and (\ref{c-3}). In a similar way, from (\ref{c3})-(\ref{c4}) and (\ref{convergence_1}), we obtain $w_1=w_2=:w$, which proves (\ref{c2}) and, partially, (\ref{c1}), namely, ${w}_\tau\to w$ weakly in $\cV$. From (\ref{c4})-(\ref{c5}), by standard arguments, we have $w_3=w_2'=w'$, which completes the proof of (\ref{c1}).\\
Now, using (\ref{poprawki_23}), we conclude from  (\ref{c28})-(\ref{c3}) that $u_\tau\to u$ weakly in $\cV$ and $\bar{w}_\tau=u_\tau'\to w$ weakly in $\cV$. Hence, by standard argument, we have $w=u'$. \\

 Now, we will show that $u$ is a solution of Problem ${\cal P}$. To this end, we deal with initial condition first. From (\ref{c28})-(\ref{c3}) we have $u_\tau\to u$ weakly in $\cW$. As the embedding $\cW\subset C([0,T];H)$ is continuous, it follows that, for a subsequence, $u_\tau\to u$ weakly in $C([0,T];H)$. Hence, in particular, $u_\tau(0)\to u(0)$ weakly in $H$. On the other hand, from (\ref{uzero}), we have $u_\tau(0)=u_\tau^0\to u_0$ strongly in $V$ and, in consequence, weakly in $H$. Summarizing, by the uniqueness of the weak limit, we have
 \begin{align}\label{poprawki_10}
 	u_\tau(0)\to u(0)=u_0 \quad \text{strongly in}\,\,V.
 \end{align}   
  From \eqref{c4}-\eqref{c5} we have $w_\tau\to w$ weakly in $\cW$. As the embedding $\cW\subset C([0,T];H)$ is continuous, it follows that, for a subsequence, $w_\tau\to w$ weakly in $C(0,T;H)$. Hence, in particular, 
\begin{align}\label{c7}
w_\tau(0)\to w(0) \quad \mbox{weakly in} \ H.
\end{align}
On the other hand, from (\ref{poprawki_21}), we have
\begin{align*}
w_\tau(0)=\frac12w_\tau^1+\frac12 w_\tau^0=\frac12(w_\tau^1-w_\tau^0)+w_\tau^0.
\end{align*}
Hence, by Lemma \ref{Lemma_w0_w1} and by \eqref{wzero}, we get $w_\tau(0)\to w_0$ strongly in $H$, so also weakly in $H$. Comparing it with (\ref{c7}), we conclude, from uniqueness of the weak limit in $H$, that
\begin{align}\label{c8}
w_\tau(0)\to w_0=w(0) \quad \mbox{strongly in} \ H.
\end{align}
\noindent We now turn to \eqref{c-1}. Testing it with an arbitrary element $v\in\cV$, we obtain
\begin{equation}
\label{c9}\langle w_\tau',v\rangle_{\cV^*\times \cV} + \langle \mathcal{A}\bar{w}_\tau, v\rangle_{\cV^*\times \cV} +   \langle\mathcal{B}\bar{u}_\tau,v\rangle_{\cV^*\times \cV} + \langle \bar{\iota}^*\bar{\xi}_\tau, v \rangle_{\cV^*\times \cV}= \langle \bar{f}_\tau, v \rangle_{\cV^*\times \cV}.
\end{equation}
Our goal is to pass to the limit with \eqref{c9} as $\tau\to 0$.
From \eqref{c1} we have
\begin{equation}
\label{c10}\langle w_\tau',v\rangle_{\cV^*\times \cV} \to \langle w',v\rangle_{\cV^*\times \cV}.
\end{equation}
As the Nemytskii operator $\mathcal{B}$ is linear and continuous, it is also weakly continuous. Hence, it follows from (\ref{c-3}) that
\begin{equation}
	\langle\mathcal{B}\bar{u}_\tau,v\rangle_{\cV^*\times \cV}\to \langle\mathcal{B}u,v\rangle_{\cV^*\times \cV}.
\end{equation}
From \eqref{c6} we have
\begin{equation}
	\langle \bar{\iota}^*\bar{\xi}_\tau, v \rangle_{\cV^*\times \cV}=\langle \bar{\xi}_\tau, \bar{\iota}v \rangle_{\cU^*\times \cU}\to\langle \xi, \bar{\iota}v \rangle_{\cU^*\times \cU}=\langle \bar{\iota}^*\xi, v \rangle_{\cV^*\times \cV}.
\end{equation}
From (\ref{c13}), it follows that $\bar{f}_\tau\to f$ weakly in $\cV^*$, hence,
\begin{equation}
\label{c12}\langle \bar{f}_\tau, v \rangle_{\cV^*\times \cV} \to \langle f,v \rangle_{\cV^*\times \cV}.
\end{equation}
It remains to pass to the limit with the nonlinear, hence most problematic term $\mathcal{A}\bar{w}_\tau$. To deal with it, we will show that
\begin{equation}
\label{c14}\limsup \langle \mathcal{A}\bar{w}_\tau, \bar{w}_\tau-w\rangle_{\cV^*\times \cV} \le 0
\end{equation}
and apply Lemma \ref{lemma:Bartosz2}. From \eqref{c9}, it follows that
\begin{align*}
& \langle \A \bar{w}_\tau,\bar{w}_\tau -w\rangle_{\cV^*\times \cV} = \langle \bar{f}_\tau, \bar{w}_\tau-w\rangle_{\cV^*\times \cV} - \langle \B \bar{u}_\tau,\bar{w}_\tau -w\rangle_{\cV^*\times \cV}- \langle w_\tau', \bar{w}_\tau-w \rangle_{\cV^*\times \cV} - \langle \bar{\xi}_\tau,\bar{\iota} \bar{w}_\tau-\bar{\iota} w\rangle_{\cU^*\times \cU}.
\end{align*}
Hence,
\begin{align}
\nonumber\limsup \langle \A \bar{w}_\tau, \bar{w}_\tau-w\rangle_{\cV^*\times \cV} &\le \limsup \langle \bar{f}_\tau, \bar{w}_\tau-w\rangle_{\cV^*\times \cV}-\liminf \langle \B \bar{u}_\tau,\bar{w}_\tau -w\rangle_{\cV^*\times \cV} \\[2mm]
&\label{c15} -\liminf\langle w_\tau',\bar{w}_\tau-w\rangle_{\cV^*\times \cV} - \liminf\langle \bar{\xi}_\tau, \bar{\iota}\bar{w}_\tau-\bar{\iota}w\rangle_{\cU^*\times \cU}. 
\end{align}
From \eqref{c13} and \eqref{c2} we have
\begin{equation}
	\label{c16}\limsup \langle \bar{f}_\tau, \bar{w}_\tau-w\rangle_{\cV^*\times \cV}=\lim \langle \bar{f}_\tau, \bar{w}_\tau-w\rangle_{\cV^*\times \cV} =0.
\end{equation}
Next, we write
\begin{align}\label{poprawki_9}
	&\langle \B \bar{u}_\tau,\bar{w}_\tau -w\rangle_{\cV^*\times \cV}=\langle \B \bar{u}_\tau-\B u_\tau,\bar{w}_\tau -w\rangle_{\cV^*\times \cV}\nonumber\\[2mm]
	&+\langle \B {u}_\tau-\B u,\bar{w}_\tau -w\rangle_{\cV^*\times \cV}+\langle \B {u},\bar{w}_\tau -w\rangle_{\cV^*\times \cV}
\end{align}
and calculate
\begin{align*}
	\left|\langle \B \bar{u}_\tau-\B u_\tau,\bar{w}_\tau -w\rangle_{\cV^*\times \cV}\right|\leq \|\B\|\|\bar{u}_\tau-u_\tau\|_{\cV}\|\bar{w}_\tau -w\|_{\cV}\leq 
	\|\B\|\|\bar{u}_\tau-u_\tau\|_{\cV}\left(\|\bar{w}_\tau\|_{\cV} +\|w\|_{\cV}\right).
\end{align*}
Now, applying (\ref{e2}) and (\ref{convergence_2}), we get
\begin{align}\label{poprawki_11}
\lim\langle \B \bar{u}_\tau-\B u_\tau,\bar{w}_\tau -w\rangle_{\cV^*\times \cV}=0.
\end{align} 
Furthermore, from (\ref{poprawki_23}) and $H(B)$
\begin{align*}
&\langle \B {u}_\tau-\B u,\bar{w}_\tau -w\rangle_{\cV^*\times \cV}=\langle \B {u}_\tau-\B u,{u}'_\tau -u'\rangle_{\cV^*\times \cV}=\nonumber\\[2mm]
&\frac12\skalar{\B u_\tau(T)-\B u(T),u_\tau(T)-u(T)}-\frac12\skalar{\B u_\tau(0)-\B u(0),u_\tau(0)-u(0)}\nonumber\\[2mm]
&\geq -\frac12\skalar{\B u_\tau(0)-\B u(0),u_\tau(0)-u(0)}\geq -\frac12\|\B\|\|u_\tau(0)-u(0)\|^2.
\end{align*}
Now, applying (\ref{poprawki_10}), we get
\begin{align}\label{poprawki_12}
\liminf \langle \B {u}_\tau-\B u,\bar{w}_\tau -w\rangle_{\cV^*\times \cV}\geq \liminf\left(- \frac12\|\B\|\|u_\tau(0)-u(0)\|^2\right)=0.
\end{align}
Finally, from (\ref{c2}), we obtain
\begin{align}\label{poprawki_13}
	\lim \langle \B {u},\bar{w}_\tau -w\rangle_{\cV^*\times \cV}=0.
\end{align}
Coming back to (\ref{poprawki_9}) and applying (\ref{poprawki_11})-(\ref{poprawki_13}), we get
\begin{align}\label{poprawki_14}
\liminf \langle \B \bar{u}_\tau,\bar{w}_\tau -w\rangle_{\cV^*\times \cV}\geq \liminf \langle \B \bar{u}_\tau-\B u_\tau,\bar{w}_\tau -w\rangle_{\cV^*\times \cV}\nonumber\\[2mm]
+\liminf \langle \B {u}_\tau-\B u,\bar{w}_\tau -w\rangle_{\cV^*\times \cV}+\liminf \langle \B {u},\bar{w}_\tau -w\rangle_{\cV^*\times \cV}=0.
\end{align}  
Next, from \eqref{e6}, $H(\iota)$ and Proposition \ref{prop:Bartosz6}, we have for a subsequence 
\begin{equation}
\label{c17}\bar{\iota}_1\bar{w}_\tau\to \bar{\iota}_1 w \quad \mbox{in} \ L^2(0,T;Z),
\end{equation}
where $\bar{\iota}_1\colon\cV\to L^2(0,T;Z)$ is the Nemytskii operator corresponding to $\iota_1$. Hence, we get
\begin{align}
&\nonumber\|\bar{\iota}\bar{w}_\tau-\bar{\iota}w\|_{\cU}^2 = \int_0^T\|\iota\bar{w}_\tau(t) -\iota w(t)\|_U^2\, dt = \int_0^T\|\iota_2\circ \iota_1(\bar{w}_\tau(t) -w(t))\|_U^2\, dt \\[2mm]
&\label{c18} \le \|\iota_2\|^2_{{\cal L}(Z,U)} \int_0^T\|\iota_1(\bar{w}_\tau(t) -w(t))\|_Z^2 \, dt = \|\iota_2\|^2_{{\cal L}(Z,U)} \|\bar{\iota}_1 \bar{w}_\tau-\bar{\iota}_1w\|^2_{L^2(0,T;Z)}.
\end{align}
From \eqref{c17} and \eqref{c18}, it follows that
\begin{equation}
\label{c19}\bar{\iota}\bar{w}_\tau\to \bar{\iota}w \quad \text{strongly in}\,\,\, \cU.
\end{equation}
From \eqref{c6} and \eqref{c19} we have
\begin{equation}
\label{c20}\lim\langle \bar{\xi}_\tau, \bar{\iota}\bar{w}_\tau-\bar{\iota}w\rangle_{\cU^*\times \cU} =0.
\end{equation}
Now, we will show that
\begin{equation}
\label{c21}\liminf \langle w_\tau', \bar{w}_\tau-w\rangle_{\cV^*\times \cV} \ge 0.
\end{equation}
To this end, we write
\begin{equation}
\label{c22}\langle w_\tau', \bar{w}_\tau-w\rangle_{\cV^*\times \cV} = \langle w_\tau', \bar{w}_\tau-w_\tau\rangle_{\cV^*\times \cV} + \langle w_\tau'-w',w_\tau -w\rangle_{\cV^*\times \cV}+ \langle w',w_\tau -w\rangle_{\cV^*\times \cV}.
\end{equation}
Hence,
\begin{align}\label{poprawki_24}
&\nonumber\liminf\langle w_\tau', \bar{w}_\tau-w\rangle_{\cV^*\times \cV} \geq \liminf\langle w_\tau', \bar{w}_\tau-w_\tau\rangle_{\cV^*\times \cV}\\[2mm] 
&+ \liminf \langle w_\tau'-w',w_\tau -w\rangle_{\cV^*\times \cV}+ \liminf\langle w',w_\tau -w\rangle_{\cV^*\times \cV}
\end{align}
We deal with the first term of the right hand side of (\ref{poprawki_24}). Directly from (\ref{w_tau}) and (\ref{poprawki_26}) we calculate
\begin{align}
&\nonumber\langle w_\tau',\bar{w}_\tau-w_\tau \rangle_{\cV^*\times \cV} = -\int_0^\tau \frac1\tau\left(  \wone-\wzero,\wone-\wzero \right)\frac{t-\half\tau}{\tau}\, dt  \\[2mm]
&\nonumber-\sum_{n=2}^N\int_{(n-1)\tau}^{n\tau}\frac1\tau \left(\frac32 \wntau - 2\wntauminus +\half \wntauminuss,\frac32 \wntau - 2\wntauminus +\half \wntauminuss\right)\frac{t-(n-\half)\tau}{\tau}\,dt \\[2mm]
&\nonumber+\sum_{n=2}^N\int_{(n-1)\tau}^{n\tau}\frac1\tau \left(\frac32 \wntau - 2\wntauminus +\half \wntauminuss, \frac14(\wntau-2\wntauminus+\wntauminuss) \right)\,dt.
\end{align}
We observe that
\begin{equation}
\nonumber\int_0^\tau \frac1\tau\left(  \wone-\wzero,\wone-\wzero \right)\frac{t-\half\tau}{\tau}\, dt =0,
\end{equation}
and for $n=2,\dots,N$
\begin{equation}
\nonumber\int_{(n-1)\tau}^{n\tau}\frac1\tau \left(\frac32 \wntau - 2\wntauminus +\half \wntauminuss, \frac32 \wntau - 2\wntauminus +\half \wntauminuss\right)\frac{t-(n-\half)\tau}{\tau}\, dt =0.
\end{equation}
Hence, 
\begin{equation}
\label{c24}\langle w_\tau', \bar{w}_\tau -w_\tau\rangle_{\cV^*\times \cV} = \frac14\sum_{n=2}^N \left(\frac32 \wntau-2\wntauminus +\frac12\wntauminuss, \wntau-2\wntauminus + \wntauminuss\right).
\end{equation}
We now use the following property 
\begin{align*}
\left(\frac32a-2b+\frac12 c,a-2b+c\right)&=\frac12|a-b|^2+|a-2b+c|^2-\frac12 |b-c|^2\\[2mm]
&\ge \frac12|a-b|^2-\frac12 |b-c|^2
\end{align*}
for all $a,b,c\in H$. Then,
\begin{align}
&\nonumber\sum_{n=2}^N \left(\frac32 \wntau-2\wntauminus +\frac12\wntauminuss, \wntau-2\wntauminus + \wntauminuss\right) \ge\\[2mm]
&\nonumber \sum_{n=2}^N \left(\frac12 \left|\wntau-\wntauminus\right|^2 -\frac12\left|\wntauminus-\wntauminuss\right|^2\right) = \\[2mm]
&\label{c25}\frac12 \left|w_\tau^N-w_\tau^{N-1}\right|^2 - \frac12 \left|\wone-\wzero\right|^2 \ge -\frac12 \left|\wone-\wzero\right|^2.
\end{align}
Using \eqref{c24}, \eqref{c25} and Lemma~\ref{Lemma_w0_w1}, we conclude that 
\begin{align}\label{poprawki_25}
\liminf\langle w_\tau', \bar{w}_\tau -w_\tau\rangle_{\cV^*\times \cV}\ge 0.
\end{align}
To deal with the second term of the right hand side of (\ref{poprawki_24}), we will use (\ref{c8}). Namely,
\begin{align}
	&\nonumber \liminf \langle w_\tau'-w',w_\tau -w\rangle_{\cV^*\times \cV}=\liminf\int_0^T \langle w_\tau'-w',w_\tau-w\rangle\, dt\\[2mm] 
	&\nonumber= \liminf\left(\half|w_\tau(T)-w(T)|^2 -\half|w_\tau(0)-w(0)|^2\right) \\[2mm]
	& \nonumber \ge \liminf \half |w_\tau(T)-w(T)|^2 +\liminf\left(-\half|w_\tau(0)-w(0)|^2\right)\\[2mm]
	&\label{c23}=\liminf \half |w_\tau(T)-w(T)|^2 \ge 0. 
\end{align}
\noindent Finally, from (\ref{c1}), we have $\liminf\langle w',w_\tau -w\rangle_{\cV^*\times \cV}=0$, which together with \eqref{poprawki_24}, \eqref{poprawki_25} and \eqref{c23}  completes the proof of \eqref{c21}. Now, \eqref{c14} follows from \eqref{c15}, \eqref{c16}, \eqref{poprawki_14},  \eqref{c20} and \eqref{c21}. Furthermore, from \eqref{e6}, \eqref{c3}, \eqref{c14} and Lemma \ref{lemma:Bartosz2}, we get
\begin{equation}
\label{c26}\A\bar{w}_\tau \to \A w \quad \mbox{weakly in} \ \cV^*.
\end{equation}
Using \eqref{c10}--\eqref{c12} and \eqref{c26}, we can pass to the limit in \eqref{c9} and get 
\begin{equation}
\langle w' + \A w + \mathcal{B}u + \bar{\iota}^* \xi -f, v\rangle_{\cV^*\times \cV}=0,
 \end{equation}
 which, by standard technique, shows that $u$ satisfy the first relation of Problem $\cal P$.\\
 It remains to pass to the limit with (\ref{c0}).  
 First, we recall that the multifunction $\partial j\colon U\to 2^{U^*}$
 has nonempty, closed and convex values. Furthermore,
 by Proposition 5.6.10 of \cite{DMP2}, it is also upper semicontinuous from $U$ (equipped
 with the strong topology) into $U^*$ (equipped with the weak topology). Hence, from \eqref{c0}, \eqref{c6}, \eqref{c19} and Proposition \ref{prop:Bartosz7}, we have 
 \begin{equation}
 \xi(t)\in \partial j(\iota w(t)) \quad \mbox{for a.e.} \ t\in (0,T),
 \end{equation}
 which completes the proof of the theorem. 
\end{proof}

\end{document}